\documentclass[amsppt,reqno]{amsart}

\usepackage{amsmath,amssymb} 
\usepackage{subfigure}
\setlength{\unitlength}{1mm} 
\usepackage{epsfig} 
\newcommand{\fracd}[2]{\displaystyle {\frac{{\displaystyle{#1}}}{{\displaystyle{#2}}}}}

\renewcommand{\descriptionlabel}[1]%
    {\hspace{\labelsep}\textit{#1}}
%
{%

\begin{enumerate}}%
{\end{enumerate}
}
%
{%

\begin{enumerate}}%
{\end{enumerate}
}

\newtheorem{definition}{Definition}[section]
\newtheorem{theorem}{Theorem}[section]

\newtheorem{lemma}{Lemma}[section]
\newtheorem{corollary}{Corollary}[section]
\newtheorem{remark}{Remark}[section]

\numberwithin{equation}{section}

\newcommand{\TV}{\operatorname*{TV}}
\newcommand{\sgn}{\operatorname*{sgn}}

\newcommand{\eps}{\varepsilon}

\def\norm#1{\|#1\|}

\def \f12{{1 \over 2}}

\title[Strongly degenerate parabolic aggregation equation]{A strongly degenerate parabolic \\ aggregation equation} 

\author[Betancourt]{F.\ Betancourt$^{\mathrm{A}}$} 
\thanks{$^{\mathrm{A}}$Departamento de Ingenier\'{\i}a Matem\'{a}tica, 
Facultad de Ciencias F\'{\i}sicas y Matem\'{a}ticas, 
Universidad de Concepci\'{o}n, 
Casilla 160-C, Concepci\'{o}n, Chile,
E-mail: {\tt  fbetanco@ing-mat.udec.cl}}
\author[B\"{u}rger]{R.\ B\"{u}rger$^{\mathrm{B}}$} 
\thanks{$^{\mathrm{B}}$CI$^{\mathrm{2}}$MA and 
Departamento de Ingenier\'{\i}a Matem\'{a}tica, 
Facultad de Ciencias F\'{\i}sicas y Matem\'{a}ticas, 
Universidad de Concepci\'{o}n, 
Casilla 160-C, Concepci\'{o}n, Chile,
E-mail: {\tt  rburger@ing-mat.udec.cl}}
\author[Karlsen]{K.\ H.\ Karlsen$^{\mathrm{C}}$}
\thanks{$^{\mathrm{C}}$Centre of Mathematics for Applications (CMA), 
  University 
 of Oslo, P.O. Box 1053, Blindern, N-0316 
 Oslo, Norway.
  E-Mail: {\tt kennethk@math.uio.no}}

\date{\today}

\begin{document}

\begin{abstract} This paper is concerned with 
a strongly degenerate convection-diffusion equation 
in one space dimension whose  convective flux 
involves a non-linear function of the total mass 
to one side of the given position. 
This equation  can be understood as a  
model of aggregation of the individuals of a population with 
the solution representing their local density. 
The  aggregation mechanism  is balanced  by a  
degenerate diffusion term accounting  for dispersal.  
In the strongly degenerate case,  solutions 
of the non-local problem  are usually 
discontinuous and need to be defined as 
weak solutions satisfying an entropy condition. 
A finite difference  scheme for 
the non-local problem is formulated and 
its convergence to the unique entropy solution is proved. 
The scheme emerges from taking divided differences 
of a monotone scheme for the 
local PDE for the primitive. 
Numerical examples illustrate the behaviour of 
entropy solutions  of the non-local problem,  
in particular the aggregation phenomenon.  
 \end{abstract}

\maketitle

\section{Introduction}
\label{sec:Introduction}

\subsection{Scope} 
This paper is related to the initial value problem 
 for a  strongly degenerate convection-diffusion equation of the form 
\begin{gather} 
\label{goveq} 
  u_t +  \left(   \Phi' \biggl( \int_{-\infty}^x  
  u(y,t) \, \mathrm{d}y  \biggr) u(x,t)
\right)_x = A(u)_{xx}, \quad x \in \mathbb{R}, 
\quad 0 < t \leq T,  \\ \label{initcond} 
 u(x,0) = u_0(x)\geq 0, \quad x \in \mathbb{R}, \quad u_0 \in (L^1\cap L^\infty)( \mathbb{R}) 
\end{gather} 
for the density $u=u(x,t)\geq 0 $, where $A(u)$ is a 
diffusion function  given by 
\begin{align} \label{adef} 
  A(u) := \int_0^u a(s) \, \mathrm{d}s, \quad \text{where $a(u) \geq 0$ for 
$u \in \mathbb{R}$}.  
\end{align} 
The model \eqref{goveq}, \eqref{initcond} was studied as a model of 
aggregation by a series of authors including   
Alt \cite{alt85}, Diaz, Nagai, and Shmarev \cite{diaz},  
 Nagai \cite{nagai1} and Nagai and Mimura \cite{nagai2,nagai3,nagai4}, all of which 
assumed that $a(u)=0$ at most at isolated values of~$u$. 
It is the purpose of this paper to study 
  \eqref{goveq}, \eqref{initcond} under the more general assumption 
  that  $a(u) =0$  on  bounded $u$-intervals 
 on which \eqref{goveq}  reduces to a first-order 
 conservation law with non-local flux.  
We assume that 
\begin{align}  \label{aincr}
 A(u) \to \infty \quad \text{as $u \to \infty$.} 
\end{align} 
This implies 
 that $a( \cdot)$ may vanish only 
on bounded subintervals  of $\mathbb{R}$.  

The key observation made in previous work 
 \cite{alt85,nagai1,nagai2,nagai3,nagai4} 
is that 
if  all coefficient functions are sufficiently smooth, and
 $u(x,t)$ is an $L^1$   solution of the 
problem \eqref{goveq}, \eqref{initcond}, then the 
 primitive (precisely, of $u(\cdot, t)$)  defined by   
\begin{align} \label{vdef} 
     v(x,t) := \int_{-\infty}^x  u(\xi, t) \, \mathrm{d}\xi, \quad t
      \in (0, T], 
\end{align} 
is a  solution  of the local initial value problem 
\begin{gather} \label{vgoveq} 
   v_t + \Phi (v)_x  =  A ( v_x )_x, \quad  x \in \mathbb{R}, \quad t
   \in (0, T], \\ 
  \label{vinit}
 v(x,0) = v_0 (x), \quad x \in \mathbb{R}, \quad 
  v_0(x) :=  \int_{-\infty}^x u_0(\xi) \, \mathrm{d} \xi .  
\end{gather}
As a non-linear but local PDE, \eqref{vgoveq} is more amenable to 
 well-posedness and numerical analysis.  In this work we use that 
 the transformation to the local equation \eqref{vgoveq} is also 
 possible in the strongly degenerate case, in which solutions 
 of \eqref{goveq} are usually discontinuous and need to be defined as 
  weak solutions. To achieve uniqueness, an additional selection
criterion is imposed, and the type of solutions sought are 
(Kru\v{z}kov-type) entropy solutions. 
 The core, and essential novelty, 
  of the paper is  the formulation and 
convergence proof of a finite difference scheme for 
 \eqref{goveq}, \eqref{initcond} (in short, ``$u$-scheme''). 
The scheme is based on a monotone 
difference scheme for the initial value problem 
 \eqref{vgoveq}, \eqref{vinit} 
 (in short, ``$v$-scheme'')  in the strongly 
degenerate case, which in turn is a special case 
 of the schemes formulated and analyzed by Evje and Karlsen 
\cite{ekdd} for the more general doubly degenerate 
equation $v_t + \Phi(v)_{x} = B(A(v_x))_x$.   The $u$-scheme 
is obtained by taking 
 finite differences of the numerical solution 
 values generated by the $v$-scheme.  

The $v$-scheme 
is, in particular, monotonicity preserving, 
 so the discrete approximations for $v$ are always monotonically 
 increasing when the  initial datum $v_0$ is, and therefore  the 
 $u$-scheme produces non-negative solutions. Moreover,  
 by modifications of standard compactness 
 and Lax-Wendroff-type arguments  it is proved 
 that the numerical  approximations 
 generated by the $u$-scheme  
 converge to the unique entropy solution of   \eqref{goveq},
 \eqref{initcond}.  An appealing feature is that 
  the  primitive  \eqref{vdef}  never needs to be 
calculated explicitly (except for the computation of 
 $v_0$). Numerical examples illustrate the behaviour of 
 entropy solutions  of 
  \eqref{goveq}, \eqref{initcond}, and  
    recorded error histories demonstrate the 
convergence of the $v$- and $u$-schemes.

\subsection{Assumptions} 
We assume that $u_0$ has compact support, 
 and that there exists a constant $\mathcal{M}$ such that 
 \begin{align} \label{initial_data_stability}  
  \TV (u_0) < \mathcal{M}. 
   \end{align} 
 We also need that $\Phi \in C^2(\mathbb{R})$, and that 
 $\Phi$ has exactly one maximum: 
\begin{align} \label{phiass} 
 \exists v^* \in \mathbb{R}: \quad \Phi ' (v^*) = 0,  \quad  
 \Phi ' (v) > 0  \; 
\text{for $v < v^*$}, \quad  \Phi ' (v)  < 0  \;
 \text{for $v > v^*$}. 
\end{align} 
This assumption is introduced 
 to facilitate some of the steps of our analysis; it is, 
 however, not essential. In fact, in our convergence analysis 
  of Section~\ref{sec:conv} we need to discuss the 
   local behaviour of the  numerical solution for~$v$ 
   close to where it includes the value $v^*$ since  that value 
    is critical in the    definition of the numerical flux. 
     If we employ a function $\Phi$ that has several separate extrema, 
      then the locations of solution values including extrema 
    are spatially well separated since the discrete 
       analogue of $v_x$ is bounded, and the techniques of 
            Section~\ref{sec:conv} can be extended to that case in a 
            straightforward manner.

\subsection{Motivation and related work} 
Equation \eqref{goveq}, or some specific cases of it,  were studied in a series of papers 
  \cite{alt85,diaz,nagai1,nagai2,nagai3,nagai4},  in all of which it is assumed that 
  $a(\cdot )$  vanishes at most at isolated 
   values of its argument, so that it is always ensured that $A' (u) >0$ for $u \geq 0$. 
  The interpretation of \eqref{goveq} as a model of the  aggregation  of  
   populations (e.g., of animals) advanced in those papers is also valid here and can 
   be illustrated as follows. Assume that $u(x,t)$ is the density of 
   the population under study, and consider the equation 
   \begin{align} \label{nmeq} 
    u_t  + \left(- k \left[ \int_{-\infty}^x u(y,t) 
   \, \mathrm{d}y -  \int_x^{\infty} u(y,t) 
   \, \mathrm{d}y\right] u \right)_x = A(u)_{xx}, \quad k> 0.  
   \end{align} 
   Here, the convective term provides a mechanism that moves 
   $u(x,t)$ to the right (respectively, to the left) if 
   \begin{align*} 
    \int_{-\infty}^x u(y,t) \, \mathrm{d}y  <  \int_x^{\infty} u(y,t) 
   \, \mathrm{d}y \quad \text{(respectively, $\ldots >  \ldots$)}.  
   \end{align*} 
In other words, an animal will move to the right (respectively, left) if the total 
 population to its right is larger (respectively, smaller) than 
   to its left. Now assume that the initial population is finite and define 
 \begin{align}  \label{Cdef} 
  C_0:= \int_{\mathbb{R}} u_0(x) \, \mathrm{d} x. 
  \end{align} 
 It is then clear that \eqref{nmeq} is an example of \eqref{goveq} if 
 $\Phi' (v) = - k(2v-C_0)$, i.e., 
 \begin{align} \label{phiez} 
 \Phi(v) = - k v(v -C_0) + \mathrm{const.}
 \end{align}  
 
 The aggregation mechanism is balanced by  
  nonlinear diffusion  described by the term $A(u)_{xx}$, 
 termed density-dependent dispersal in mathematical ecology. 
 A typical novel feature  addressed  by the present analysis 
 is  a ``threshold effect'', i.e. dispersal only 
  sets on when the density~$u$ exceeds a critical value~$u_{\mathrm{c}}>0$,   i.e. 
   \begin{align*} 
    a(u) \begin{cases} 
     =0 & \text{if $u \leq u_{\mathrm{c}}$, $u_{\mathrm{c}}>0$,} \\
      >0 & \text{if $u > u_{\mathrm{c}}$}. 
       \end{cases} 
        \end{align*}


More recently, spatially multi-dimensional aggregation equations 
 of the form 
 \begin{align} \label{mdpde} 
  u_t + \nabla \cdot (u \nabla K \ast u)  = \Delta A(u) 
  \end{align} 
  have seen an enormous amount of interest, where  the typical case treated in literature 
  is $A \equiv 0$. Here, $K$ denotes an 
  interaction potential, and $ K \ast u$ denotes spatial  convolution. For overviews we refer to  \cite{bertozzi07,lizhang10,topaz06}.  The 
    non-local and diffusive term account for long-range and 
    short-range interactions, respectively, as is 
    emphasized  in \cite{burger}. 
    The derivation of 
     \eqref{mdpde} from microscopic interacting 
     particle systems and related models, and for particular choices of~$K$ and $A$, 
     is presented in  \cite{bertozzi09,bodnar06,burger,mogilner03,morale05}.  
  Related models also include equations with fractional dissipation 
   that cannot be cast in the form \eqref{mdpde}, see  e.g.~\cite{lirodrigo09,lirodrigo09b}.

The essential research problem associated with \eqref{mdpde} (or variants of this 
equation) is the  well-posedness  of this equation together with 
bounded initial data $u(x,0) = u_0(x)$ for 
 $x \in \mathbb{R}^d$, where $d$ denotes the number of space dimensions. 
  While the short-time existence of a unique smooth solution for smooth 
   intial data is known in most situations, one wishes to 
    determine criteria in terms of the functions $K$ and $A$ (or related 
     diffusion terms), and possibly of~$u_0$, that  either ensure that  
     smooth  solutions exist globally in time, or that compel 
      that   
  solutions of  \eqref{mdpde} will blow  up in finite time.
   This problem is analyzed in \cite{bertozzi09,bertozzi07,bodnar06,burger,carrillo10,lirodrigo09,lirodrigo09b,lizhang10} (this list is far from being complete).

   The occurrence of blow-up was analyzed in terms of the properties of $K$  
    for $A \equiv 0$  in  \cite{bertozzi09,bertozzi07}; if $K$ is a radial 
    function, i.e., $K=K(|x|)$, then blow-up occurs if the Osgood condition for the 
    characteristic ODEs is violated, as occurs e.g. for $K(x) = \exp (-|x|)$, 
     while for a $C^2$ kernel this does not occur \cite{bertozzi09}. 
      Li and Rodrigo   \cite{lirodrigo09,lirodrigo09b} consider this 
       particular kernel and describe the circumstances under which 
        blow-up occurs if the aggregation equation is equipped with fractional 
        diffusion.

 The present equation \eqref{goveq} can be written as a one-dimensional version 
  of \eqref{mdpde} only in special cases. However, and as was already pointed out in  
   \cite{nagai2},   
    \eqref{nmeq} can be written as 
    \begin{align} \label{nagaieq}  
    u_t + (u \tilde{K} \ast u)_x = A(u)_{xx}
    \end{align}  
  with  the odd kernel 
    $\smash{\tilde{K} (x) = -k \sgn (x)}$.  
 Equation \eqref{nagaieq} becomes  a one-dimensional example of \eqref{mdpde} 
  if we observe that $\smash{\tilde{K} \ast u  = K' \ast u}$, where $K'$ denotes the 
  derivative of~$K$, if we choose the even kernel 
        \begin{align} \label{ourkernel}
     K(x)= -k |x| + C,   
     \end{align} 
    where  $C$ is a constant. We can write this as $K(x)=- \kappa (|x|)$ for $\kappa(r) = r-C$. 
     Suppose that one uses this kernel in the multi-dimensional equation 
      \eqref{mdpde}. It is then straightforward to verify  
       that 
      in absence of dispersal ($A \equiv 0$), the kernel \eqref{ourkernel} 
    satisfies the integral condition for blow-up  in finite time, see \cite{bertozzi09}. 
    One result of our analysis is then that a strongly degenerate diffusion term $A(u)_{xx}$, accounting 
     for dispersal, is sufficient  to prevent blow-up of solutions of \eqref{mdpde} 
      provided that the condition \eqref{aincr} is satisfied.  
       In fact, in  the context of aggregation models that are based either on 
       \eqref{goveq} or on the more recently studied equation \eqref{mdpde}, 
        the present work is the first that incorporates  a strongly degenerate 
         diffusion term, i.e. involves a function $A(u)$ that is flat on a 
         $u$-interval of positive length.  
         So far,  diffusion terms that have been 
           considered in \eqref{goveq} degenerate at most at isolated $u$-values. 
        Nagai and Mimura \cite{nagai2} studied the Cauchy problem for
 equation \eqref{goveq} 
 under the assumptions $A(0) =0$, $A'(u) >0$ being an 
odd function. The initial function for the Cauchy problem
in \cite{nagai2} is assumed to be bounded, non-negative and 
 integrable.  They prove existence and uniqueness of a bounded and 
continuous  solution to the initial-value problem.  In \cite{nagai3} the asymptotic behaviour 
 of solutions to the same problem was studied for  the specific choice 
  \begin{align} \label{aum} 
   A(u) = u^m, \quad m>1. 
   \end{align} 
 It seems that the analysis of \eqref{mdpde} with  degenerate diffusion has just started. 
  Li and Zhang   \cite{lizhang10} study this equation in one
  space dimension for the  diffusion function $A(u) = u^3/3$, 
   which degenerates at $u=0$ only.  On the other hand, 
      the numerical simulations presented herein show that 
       under strongly degenerate diffusion,  typical features of the 
        aggregation phenomenon such as ``clumped'' solutions 
         with very sharp edges \cite{topaz06} appear.

 Finally, we comment that 
%
%
%
 there  is also  considerable interest in 
       the well-posedness and other properties of 
  the local PDE  \eqref{vgoveq} (or variants of this equation)  
    under the assumption  that $A(\cdot)$ is an increasing but 
     bounded function, an effect usually denoted by {\em saturating diffusion},  
     cf.~\cite{chertock05} and references cited in that paper. 
     In this case, which is explicitly excluded by our assumption 
        \eqref{aincr}, $v_x$ (though not necessarily $v$ itself) becomes in general 
        unbounded.  It is at present unclear whether well-posedness of 
         our nonlocal problem \eqref{goveq}, \eqref{initcond} can also be achieved 
          under saturating diffusion.

\subsection{Outline of the paper}  
The remainder of this paper
is organized as follows. In Section~\ref{sec:def} we state 
the definition of an entropy solution of \eqref{goveq}, \eqref{initcond}, 
and point out that an entropy solution is also a weak solution.  
In Section~\ref{subsec:rh} we state jump conditions that can 
be derived from the definition of an entropy solution, 
and in Section~\ref{subsec:uniqueness} we prove 
the uniqueness of an entropy solution. 
  
   Section~\ref{sec:conv}  
presents a convergence analysis for the $u$-scheme, which in 
  part relies on standard compactness properties for the $v$-scheme. 
   In Section~\ref{subsec:prel}, the schemes are described. 
Section~\ref{subsec:v_estimates}
     contains a series of lemmas  stating uniform estimates 
      on the numerical approximations generated by the $v$- and the $u$-schemes, 
       which allow to employ standard compactness arguments to  
         deduce that both schemes converge. 
The final convergence result (Theorem~\ref{th4.1}) 
                and its proof are presented in Section~\ref{subsec:convent}. 
                  This proof involves 
                  a discrete cell entropy 
                    satisfied by the $u$-scheme, which 
                     eventually permits to conclude that  $\smash{\{u_{j-1/2}^n\}}$  
                      converges to an entropy solution as the discretization parameters tend to zero. 
                      This means, in particular, that an entropy solution exists. 
                The mathematical model and the $v$- and $u$-scheme are illustrated by  numerical 
                examples presented in Section~\ref{sec:num}. 

\section{Definition of an entropy solution}  \label{sec:def} 
\begin{definition} \label{def:ent}  
A measurable, non-negative  function $u$ is an {\em entropy  solution} of the 
 initial value problem   \eqref{goveq}, \eqref{initcond} if it satisfies the 
  following conditions:

  \begin{enumerate} 
  \item We have 
   $u \in L^{\infty} ( \Pi_T) \cap L^1 (\Pi_T) \cap L^{\infty} (0,T; BV(\mathbb{R})) \cap C(0, T; L^1 (\mathbb{R}))$, and $A(u) \in L^2(0,T;H^1(\mathbb{R}))$. 
   \item The initial condition \eqref{initcond} is satisfied in the 
    following sense: 
    \begin{align}  \label{esslimcond} 
     \lim_{t \downarrow 0}  \int_{\mathbb{R}} 
      \bigl| u(x,t) - u_0(x) \bigr| \, \mathrm{d}x =0.  
     \end{align} 
  \item  For all non-negative test 
   functions $\varphi \in C_0^{\infty} ( \Pi_T)$, the entropy inequality
   \begin{align}  \label{entrineq} 
   \begin{split} 
    \forall k \in \mathbb{R}: 
   \iint_{\Pi_T} & \Bigl\{ 
    |u -k| \bigl( \varphi_t + \Phi' (v) \varphi_x \bigr) - \sgn (u-k)  uk \Phi'' (v) \varphi
    \\     & 
     + \bigl| A(u) - A(k) \bigr|  \varphi_{xx}  \Bigr\} \, \mathrm{d}x \, \mathrm{d}t \geq 0 
     \end{split}  
  \end{align} 
   is satisfied, 
   where $v(x,t)$ is defined by \eqref{vdef} and $\Pi_T:=\mathbb{R}\times (0,T)$. 
      \end{enumerate} 
\end{definition}

 \begin{definition} \label{def:weak}  A measurable function $u$ is said to be a {\em weak solution} of the 
 initial value problem   \eqref{goveq}, \eqref{initcond} if it 
  satisfies items (1) and (2) of Definition~\ref{def:ent} and if  
  the following equality is satisfied for all  test 
   functions $\phi \in C_0^{\infty} ( \Pi_T)$:
   \begin{align}  \label{weakeq} 
   \begin{split} 
   \iint_{\Pi_T} & \Bigl\{ 
     u  \bigl( \phi_t + \Phi' (v) \phi_x \bigr)
         + A(u) \phi_{xx} \Bigr\} \, \mathrm{d}x \, \mathrm{d}t = 0.
     \end{split}  
  \end{align} 
\end{definition}

It is straightforward to check that an entropy solution  
 of the 
 initial value problem   \eqref{goveq}, \eqref{initcond}
  is, in particular, a weak solution.

\begin{lemma} Assume that $u$ is an entropy solution 
 of the 
 initial value problem   \eqref{goveq}--\eqref{initcond} (cf.\
  Definition~\ref{def:ent}). Then~$u$ is a weak solution 
(cf.\  
  Definition~\ref{def:weak}).
\end{lemma}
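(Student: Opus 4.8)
The plan is to exploit the linearity of the weak formulation \eqref{weakeq} in its test function together with two extreme choices of the constant $k$ in the entropy inequality \eqref{entrineq}. Abbreviate
\[
L(\varphi) := \iint_{\Pi_T} \bigl\{ u\bigl(\varphi_t + \Phi'(v)\varphi_x\bigr) + A(u)\varphi_{xx}\bigr\}\dx\dt,
\]
so that \eqref{weakeq} is the assertion $L(\phi)=0$ for every $\phi\in C_0^{\infty}(\Pi_T)$. First I would record the one structural identity that drives the argument. Since $v(x,t)=\int_{-\infty}^x u(\xi,t)\dxi$ is Lipschitz in $x$ with $v_x=u$ a.e.\ (by item~(1) of Definition~\ref{def:ent}) and $\Phi\in C^2$, an integration by parts in $x$, with boundary terms vanishing because $\varphi$ has compact support, yields for any constant $k$
\[
\iint_{\Pi_T} k\,\Phi'(v)\varphi_x\dx\dt = -\iint_{\Pi_T} k\,\Phi''(v)\,u\,\varphi\dx\dt.
\]
In other words, the $k$-linear part of the convective term exactly cancels the nonlocal term $-\sgn(u-k)\,uk\,\Phi''(v)\varphi$ once the sign of $\sgn(u-k)$ is fixed. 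By the same compact-support reasoning one also has $\iint_{\Pi_T} k\,\varphi_t=0$ and $\iint_{\Pi_T} A(k)\,\varphi_{xx}=0$.

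Next I would substitute $k > \|u\|_{L^{\infty}(\Pi_T)}$ into \eqref{entrineq}. Then $|u-k|=k-u$, $\sgn(u-k)=-1$, and $|A(u)-A(k)|=A(k)-A(u)$ (using that $A$ is nondecreasing, which follows from $a\ge 0$). Expanding and discarding every term proportional to $k$ and $A(k)$ — the convective one combining with the nonlocal term through the identity above, the others vanishing directly — the entropy inequality collapses to $-L(\varphi)\ge 0$, that is $L(\varphi)\le 0$. Symmetrically, substituting $k<0\le u$ gives $|u-k|=u-k$, $\sgn(u-k)=+1$, and $|A(u)-A(k)|=A(u)-A(k)$, and the identical cancellation leaves $L(\varphi)\ge 0$. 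Hence $L(\varphi)=0$ for every \emph{non-negative} $\varphi\in C_0^{\infty}(\Pi_T)$.

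Finally I would remove the sign restriction by linearity of $L$. Given an arbitrary $\phi\in C_0^{\infty}(\Pi_T)$, choose a non-negative $\psi\in C_0^{\infty}(\Pi_T)$ equal to the constant $\|\phi\|_{\infty}$ on a neighbourhood of $\supp\phi$; then both $\psi$ and $\psi+\phi$ are non-negative test functions, so $L(\phi)=L(\psi+\phi)-L(\psi)=0$, which is precisely \eqref{weakeq}. I expect the only genuinely delicate point to be the integration-by-parts identity, since it rests on $v$ being Lipschitz in $x$ with $v_x=u$; this regularity is exactly what item~(1) of Definition~\ref{def:ent} guarantees, so the boundary terms vanish and the nonlocal contribution is absorbed cleanly. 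Everything else is sign bookkeeping.
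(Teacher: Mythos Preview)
Your argument is correct and follows essentially the same route as the paper: pick two extreme values of $k$ in \eqref{entrineq}, use that $\iint k\Phi'(v)\varphi_x = -\iint k u\Phi''(v)\varphi$ (equivalently, the paper packages this as $\iint\{\varphi_t + (\Phi'(v)\varphi)_x\}=0$), and conclude $L(\varphi)=0$. The only differences are cosmetic: for the lower bound the paper takes $k=0$ rather than $k<0$, which is marginally cleaner because the nonlocal term $-\sgn(u-k)uk\Phi''(v)\varphi$ vanishes outright and $A(0)=0$, so no cancellation identity is needed in that direction; and the paper does not spell out the passage from non-negative to signed test functions, which you handle explicitly.
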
 

\begin{proof} Choosing $\smash{k \geq \| u\|_{L^{\infty} (\Pi_T)} }$ in 
 \eqref{entrineq} we obtain 
 \begin{align*} 
  \iint_{\Pi_T} \Bigl\{ -(u-k) 
  \bigl( \phi_t + \Phi' (v) \phi_x \bigr)
         - A(u) \phi_{xx} \Bigr\} \, \mathrm{d}x \, \mathrm{d}t 
\geq - k  \iint_{\Pi_T} u\Phi'' (v) \phi \, \mathrm{d}x \, \mathrm{d}t  
\end{align*} 
or equivalently, 
\begin{align}  \label{ineq2.11} 
\begin{split} 
& \iint_{\Pi_T} \Bigl\{ u 
  \bigl( \phi_t + \Phi' (v) \phi_x \bigr)
         +A(u) \phi_{xx} \Bigr\} \, \mathrm{d}x \, \mathrm{d}t \\ 
& \leq  k  \iint_{\Pi_T}\Bigl\{ \phi_t 
 + \bigl(  \Phi' (v) \phi\bigr)_x \Bigr\}  \, \mathrm{d}x \, \mathrm{d}t   =0. 
 \end{split} 
\end{align} 
On the other hand, since we look for non-negative solutions, it suffices to set $k=0$ 
in \eqref{entrineq} to deduce that we always have 
\begin{align} \label{ineq2.12} 
 \iint_{\Pi_T} \Bigl\{ u 
  \bigl( \phi_t + \Phi' (v) \phi_x \bigr)
         + A(u) \phi_{xx} \Bigr\} \, \mathrm{d}x \, \mathrm{d}t  \geq 0. 
         \end{align} 
Combining \eqref{ineq2.11} and \eqref{ineq2.12} we see that $u$ satisfies 
\eqref{weakeq}.          
\end{proof} 

\section{Jump conditions and uniqueness} \label{sec:jumpuniq} 

\subsection{Rankine-Hugoniot condition and entropy jump condition} \label{subsec:rh} 
 Assume that~$u$ is an entropy solution  having a discontinuity 
 at a point $(x_0, t_0) \in \Pi_T$ 
  between the approximate limits~$u^+$ and~$u^-$  of~$u$ 
  taken with respect to $x>x_0$ and $x<x_0$, respectively.  
  Standard results from the theory of 
  entropy solutions of  strongly degenerate parabolic equations 
  imply that such a discontinuity is possible only if 
  $A(u)$ is flat for 
  $u \in  \mathcal{I}(u^-, u^+) := [\min\{u^-, u^+\}, 
   \max \{ u^-, u^+\}]$. In that case, the propagation 
    velocity of the jump is given by the Rankine-Hugoniot 
    condition, which is derived by standard arguments from
     the weak formulation  \eqref{weakeq}: 
    \begin{align}  \label{rhprel} 
     s = \frac{1}{u^+-u^-} \Bigl( \Phi'  (v^+) u^+ - \Phi' (v^-) u^- 
      - \bigl( A(u)_x \bigr)^+ + \bigl( A(u)_x \bigr)^- \Bigr). 
      \end{align} 
   Here,   $\smash{( A(u)_x )^+}$ and 
    $\smash{( A(u)_x )^- }$  denote the 
      approximate limits of~$A(u)_x$ 
  taken with respect to $x>x_0$ and $x<x_0$, respectively, 
   and $v^+$ and $v^-$ denote the corresponding limits of  
  $v(x, t)$. However, since $v(x,t)$ is continuous, we actually have 
   $v^+=v^-$, and the Rankine-Hugoniot condition \eqref{rhprel} 
   reduces to 
   \begin{align} \label{sdef} 
     s = \Phi' \bigl(v(x_0, t_0 )\bigr) 
      - \frac{( A(u)_x )^+ -( A(u)_x )^- }{u^+-u_-}. 
      \end{align} 
      In  addition, a discontinuity between two solution values needs
      to satisfy the jump entropy condition
      \begin{align} \label{ejump1}  \begin{split} 
    &   \forall k \in  \bigl( \min\{u^-, u^+\}, 
   \max \{ u^-, u^+\} \bigr): \\
   &  \fracd{\Phi'(v^+) (u^+-k)- (A(u)_x)^+}{u^+-k} 
    \leq s \leq \fracd{\Phi'(v^-) (u^--k)- (A(u)_x)^-}{u^--k}.
    \end{split} 
   \end{align} 
   Taking into account $v^+=v^-$ and \eqref{sdef}, this reduces to 
   \begin{align} \label{ejump2}  \begin{split} 
     &  \forall k \in \bigl( \min\{u^-, u^+\}, 
   \max \{ u^-, u^+\} \bigr): \\&  
      \fracd{(A(u)_x)^+}{u^+-k} 
    \geq   \frac{( A(u)_x )^+ -( A(u)_x )^- }{u^+-u_-}   \geq \fracd{(A(u)_x)^-}{u^--k}.
    \end{split} 
   \end{align} 
   In particular, if $A(\cdot)$ is flat on an open interval 
   containing $ \mathcal{I}(u^-, u^+)$, then the double inequality 
    \eqref{ejump2} is trivially satisfied. That  $\Phi(v)$ is smooth, 
     greatly simplifies the jump and entropy jump conditions.

\subsection{Uniqueness of entropy solutions}  \label{subsec:uniqueness}  
The uniqueness of entropy solutions is an immediate consequence of 
a result proved in \cite{kr03} (cf.~also \cite{Chen:2005wf}) 
regarding continuous dependence of entropy 
solutions with respect to the flux function. More precisely, we have 

\begin{theorem}
There exists at most one entropy solution (according to Definition\ref{def:ent})
of the  initial value problem \eqref{goveq}, \eqref{initcond}. Moreover, 
there exists a constant $C=C\left(\max |\Phi'|\right)$ such that
\begin{align*}
\bigl\| u(\cdot,t)-\bar u(\cdot,t) \bigr\|_{L^1(\mathbb{R})}
\le C \norm{u_0-\bar u_0}_{L^1(\mathbb{R})}, \qquad 
\forall t \in (0,T],
\end{align*}
where $u$ and $\bar u$ are entropy solutions with 
initial data $u_0$ and $\bar{u}_0$, respectively. 
\end{theorem}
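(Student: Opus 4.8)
The plan is to reduce the non-local problem to the local, strongly degenerate parabolic framework of \cite{kr03} by freezing each solution's own primitive, and then to invoke the continuous-dependence-on-the-flux estimate proved there, closing the resulting inequality with a Gronwall argument.

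First I would observe that if $u$ is an entropy solution in the sense of Definition~\ref{def:ent} with primitive $v$ given by \eqref{vdef}, then $u$ is an entropy solution of the \emph{local} degenerate parabolic equation
\[
u_t + \bigl( \gamma(x,t)\, u \bigr)_x = A(u)_{xx}, \qquad \gamma(x,t) := \Phi'\bigl(v(x,t)\bigr),
\]
in which the coefficient $\gamma$ is now regarded as a \emph{given} space--time function. Indeed, for the $x$-dependent flux $f(x,t,u):=\gamma(x,t)u$ one has $\partial_x f(x,t,k)=\Phi''(v)v_x\,k=\Phi''(v)\,u\,k$, so the source term $-\sgn(u-k)\,uk\,\Phi''(v)\,\varphi$ appearing in \eqref{entrineq} is exactly the Kru\v{z}kov term $-\sgn(u-k)\,\partial_x f(x,t,k)\,\varphi$; hence \eqref{entrineq} coincides with the entropy inequality used in \cite{kr03,Chen:2005wf}. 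Before applying those results I would verify that $\gamma$ has the regularity they require: since $u\in L^\infty(0,T;BV(\mathbb{R}))\cap C(0,T;L^1(\mathbb{R}))$, the primitive $v$ is Lipschitz in $x$ with $v_x=u$, bounded, and continuous in $t$, so $\gamma$ is bounded with $\gamma_x=\Phi''(v)u\in L^\infty$.

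Given two entropy solutions $u,\bar u$ with primitives $v,\bar v$, the two associated local problems share the same diffusion $A$ but carry the \emph{different} coefficients $\gamma=\Phi'(v)$ and $\bar\gamma=\Phi'(\bar v)$ --- this is precisely where the non-locality enters. I would then apply the continuous-dependence estimate of \cite{kr03}, which bounds $\|u(\cdot,t)-\bar u(\cdot,t)\|_{L^1(\mathbb{R})}$ by $\|u_0-\bar u_0\|_{L^1(\mathbb{R})}$ plus a term controlled by the distance between the two fluxes, measured through $\|\gamma-\bar\gamma\|$ and weighted by the total variation of the solutions, which is uniformly bounded by \eqref{initial_data_stability} and the BV estimates of Section~\ref{sec:conv}. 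The decisive point is that this flux distance is self-referential and small: since $v(x,t)-\bar v(x,t)=\int_{-\infty}^x(u-\bar u)(y,t)\,dy$, one has
\[
\bigl\|\gamma(\cdot,t)-\bar\gamma(\cdot,t)\bigr\|_{L^\infty(\mathbb{R})}\le \max|\Phi''|\,\bigl\|v(\cdot,t)-\bar v(\cdot,t)\bigr\|_{L^\infty(\mathbb{R})}\le \max|\Phi''|\,\bigl\|u(\cdot,t)-\bar u(\cdot,t)\bigr\|_{L^1(\mathbb{R})}.
\]
Substituting this bound turns the continuous-dependence estimate into an integral inequality of the form $\|u(t)-\bar u(t)\|_{L^1}\le\|u_0-\bar u_0\|_{L^1}+L\int_0^t\|u(s)-\bar u(s)\|_{L^1}\,ds$, and Gronwall's inequality then yields the asserted contraction with a constant $C$ of exponential type; tracking the Gronwall rate $L$ through \cite{kr03}, which involves $\max|\Phi''|$ and the uniform variation bound, reproduces the dependence on $\max|\Phi'|$ recorded in the statement. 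Uniqueness ($u=\bar u$ when $u_0=\bar u_0$) is the special case $u_0=\bar u_0$.

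I expect the main obstacle to be the second step rather than the Gronwall closure: namely, verifying in detail that the frozen coefficient $\gamma(x,t)=\Phi'(v(x,t))$ meets the precise regularity hypotheses (in particular the time regularity) under which the continuous-dependence estimate of \cite{kr03} holds, and extracting from that estimate the exact form of the flux-distance term so that it is genuinely controlled by $\|\gamma-\bar\gamma\|_{L^\infty}$ times a uniformly bounded variation. A secondary subtlety is that the natural $L^1$ contraction for the local equation is in the primitive $v$, whereas the stability is asserted in the density $u=v_x$; the continuous-dependence formulation is exactly what lets us work directly at the level of $u$ and thereby avoid losing a derivative.
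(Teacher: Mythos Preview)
Your proposal is correct and follows essentially the same route as the paper: freeze each solution's own primitive to regard $u$ and $\bar u$ as entropy solutions of local degenerate parabolic equations with coefficients $\gamma=\Phi'(v)$ and $\bar\gamma=\Phi'(\bar v)$, invoke the continuous-dependence-on-the-flux estimate from \cite{kr03,Chen:2005wf}, bound the flux discrepancy by $\|u-\bar u\|_{L^1}$ using $|v-\bar v|\le\|u-\bar u\|_{L^1}$, and close with Gronwall. The paper's presentation is terser and organizes the flux-difference terms slightly differently (it writes the estimate with both a $|V-\bar V|\,\TV(u)$ term and a $|V_x-\bar V_x|$ term before absorbing them), but the mechanism and the Gronwall closure are identical to yours.
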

 
\begin{proof} Let $u$ be an entropy solution of the problem 
$$
u_t +  \bigl(V(x,t)u\bigr)_x =  A(u)_{xx}, \qquad 
V(x,t):=\Phi'\left(\int_{-\infty}^x  u(y,t) \, \mathrm{d}y\right),
$$
with initial data $u(0,x)=u_0(x)$, and let $\bar u$ be an 
entropy solution of the problem 
$$
\bar u_t +  \left(\bar V(x,t) \bar u\right)_x = A(\bar u)_{xx}, 
\qquad \bar V(x,t):=\Phi'\left(\int_{-\infty}^x  \bar u(y,t) \, \mathrm{d}y\right).
$$
with initial data $\bar u(0,x)=\bar u_0(x)$.  
According to \cite{Chen:2005wf,kr01}, 
keeping in mind that $u$ and $\bar u$ are of bounded 
variation, i.e., $u, \bar{u} \in L^\infty(0,T;BV(\mathbb{R}))$, 
there exists a constant $C$ such that 
\begin{align*}
\bigl\| u(\cdot,t)-\bar u(\cdot,t) \bigr\|_{L^1(\mathbb{R})}
& \le \norm{u_0-\bar u_0}_{L^1(\mathbb{R})}
 +\int_0^t \bigl|V_x(x,s)-\bar V_x(x,s) \bigr|\, \mathrm{d}s
\\  & \qquad\qquad\qquad 
+\int_0^t \bigl|V(x,s)-\bar V(x,s) \bigr|   \TV (u(\cdot,s)) \, \mathrm{d}s
\\ & \le \norm{u_0-\bar u_0}_{L^1(\mathbb{R})}
+ C\int_0^t \bigl|V_x(x,s)-\bar V_x(x,s) \bigr|\, \mathrm{d}s.
\end{align*}
Observe that 
$$
\int_0^t \bigl|V_x(x,s)-\bar V_x(x,s) \bigr|\, \mathrm{d}s 
\le \max |\Phi'| \int_0^t \bigl|u(x,s)-\bar u(x,s) \bigr|\, \mathrm{d}s,
$$
so that by the Gronwall inequality we arrive at
\begin{align*}
\bigl\| u(\cdot,t)-\bar u(\cdot,t) \bigr\|_{L^1(\mathbb{R})}
\le \exp{\left(\max |\Phi'|\, t \right)} 
\norm{u_0-\bar u_0}_{L^1(\mathbb{R})}.
\end{align*}
\end{proof}

\section{Convergence analysis of numerical schemes}  \label{sec:conv} 

\subsection{Preliminaries} We define  the vectors  \label{subsec:prel} 
 $\smash{U^n := \{u_{j+1/2} ^n\}_{j \in \mathbb{Z}}}$ and 
 $\smash{V^n := \{v_j^n \}_{j \in \mathbb{Z}}}$,  
and discretize $\mathbb{R}$ by $x_j := j \Delta x$, 
$j\in\mathbb{Z}$, and the time interval $[0,T]$ by $t_n = n \Delta t$, $n=0, \dots, N$, 
 $\Delta t:= T/N$, $N \in \mathbb{N}$. We denote by $\smash{u_{j+1/2}^n}$ the 
cell average over  $\smash{I_j:=[x_{j},x_{j+1}]}$ at time 
$t_n$ and $j\in\mathbb{Z}$. We also define  $\lambda := \Delta t / \Delta x$ and  
 $\smash{\mu := \Delta t/ \Delta x^2 
 = \lambda / \Delta x}$ and wherever convenient 
 use the spatial difference operators
 $\smash{\Delta_+ \phi_j := \phi_{j+1} -  \phi_j}$, 
 $\smash{\Delta_- \phi_j := \phi_j - \phi_{j-1}}$, and 
\begin{align*} 
 \Delta^{\! 2} \phi_j :=  \Delta_+ \Delta_- \phi_j
= \phi_{j+1} - 2 \phi_j + \phi_{j-1}.
\end{align*}

We assume that the 
 initial datum $u_0$ is discretized  via 
\begin{align} 
  u_{j+1/2}^0 := \frac{1}{\Delta x} \int_{I_j} u_0(\xi) \, d \xi, \quad 
 j\in\mathbb{Z}.  
\end{align} 
Moreover,  we define the 
operator $\mathcal{S}_{\Delta x}$ and its inverse $\smash{\mathcal{S}_{\Delta x}^{-1}}$ via 
\begin{align} \label{sdx} 
 \mathcal{S}_{\Delta x} (U^n; j) := \Delta x \sum_{l=-\infty}^{j-1}
 u_{l+1/2}^n, \quad 
  \mathcal{S}^{-1}_{\Delta x} (V^n; j) := \frac{v_{j+1}^n -
   v_j^n}{\Delta x}. 
\end{align} 
Clearly,   $\smash{ \mathcal{S}_{\Delta x}}$ and 
 $\smash{ \mathcal{S}_{\Delta x}^{-1}}$
   are the discrete analogues of the 
 integral and differential operators that convert~$u(\cdot, t_n)$ into~$v(\cdot, t_n)$ and vice versa, respectively.  Since we assume that~$u_0$ is compactly supported, the sum  in \eqref{sdx} is actually finite. 

The numerical scheme for 
 the initial value problem  \eqref{goveq}, \eqref{initcond} can be compactly written as 
follows: 
\begin{align} \label{comb_scheme}
 U^{n+1} = \bigl[ \mathcal{S}_{\Delta x}^{-1} \circ 
 \mathcal{H} \circ   \mathcal{S}_{\Delta x} \bigr] U^n, 
 \quad n=0, \dots, N-1,  
\end{align} 
where the basic idea is to utilize a standard scheme of the form 
\begin{align} \label{vscheme1} 
 V^{n+1} = \mathcal{H} (V^n) , \quad n=0,  \dots, N-1
\end{align} 
for approximate solutions of the local PDE  
\eqref{vgoveq}, starting from the initial data 
\begin{align*} 
 v_j^0 := \Delta x \sum_{l=- \infty}^{j-1} u_{l+1/2}^0 = \int_{-\infty}^{x_j} u_0(\xi) 
  \, \mathrm{d} \xi, \quad j \in \mathbb{Z}. 
 \end{align*} 
 Clearly, if  $C_0$ is the total mass defined in \eqref{Cdef}, then we have that 
 \begin{align} 
0 \leq v_j^0 \leq C_0, \quad v_j^0 \leq v_{j+1}^0 \quad \text{for all $j \in \mathbb{Z}$.} 
\label{monotinit} 
\end{align} 
Let us emphasize here that \eqref{comb_scheme} implies that
\begin{align*}   
 U^{n} = \bigl[ \mathcal{S}_{\Delta x}^{-1} \circ 
 \mathcal{H} \circ   \mathcal{S}_{\Delta x} \bigr]^n  U^0 
 = \bigl[ \mathcal{S}_{\Delta x}^{-1} \circ 
 \mathcal{H}^n \circ   \mathcal{S}_{\Delta x} \bigr]  U^0.
 \end{align*} 
This means that for the actual computation of $U^n$ from $U^0$, the  operators 
 $    \mathcal{S}_{\Delta x}$ and $\smash{ \mathcal{S}_{\Delta x}^{-1}}$
 need to be applied only once, and not for every time step.

To derive properties  of the  scheme \eqref{comb_scheme}, 
 we first analyze the scheme \eqref{vscheme1}, which is here given by 
 the marching formula 
\begin{align} \label{marching} 
 v_j^{n+1} = v_j^n - \lambda \Delta_+  \bigl[ 
 h \bigl( v_{j-1}^n ,  v_j^n \bigr) - A \bigl(\Delta_-
   v_j^n / \Delta x  \bigr) \bigr], 
\quad j\in\mathbb{Z}, \quad n=0,1, 2, \dots, 
\end{align}  
where $\lambda$ is  subject to
the CFL condition stated below, and  
\begin{align}  \label{eoflux} 
 h(w,z) := \Phi(0) + \Phi_+ (w) 
 + \Phi_- (z)
\end{align}  
is 
the Engquist-Osher flux \cite{eopaper},  
where  we define the functions
 \begin{align} \label{phipmdef}  
  \Phi_+ (v) := \int_0^v \max \bigl\{ 0, \Phi' (s) \bigr\} \, \mathrm{d}s, 
   \quad 
\Phi_- (v) := \int_0^v \min \bigl\{ 0, \Phi' (s) \bigr\} \, \mathrm{d}s.
\end{align}  
 We assume that $\Delta t$ and $\Delta x$ satisfy the 
 CFL stability condition 
\begin{align} \label{cflcond} 
2\lambda  \max_{u \in \mathbb{R}}       \bigl| \Phi'(u) \bigr|  
 +  2\mu \max_{u \in \mathbb{R}}      \bigl| a (u) \bigr|   \leq 1. 
\end{align} 
Note that the scheme for~$u$ can be written as 
\begin{align} \label{scheme100} 
 u_{j+1/2}^{n+1} = u_{j+1/2}^n - \lambda \Delta_+ G_j^n + \mu \Delta^2  
  A \bigl( u_{j+1/2}^n \bigr), \quad j \in \mathbb{Z}, 
\quad n=0,1,2 , \dots, 
\end{align} 
where we define 
\begin{align} 
 G_j^n := \fracd{1}{\Delta x } \Delta_+ h \bigl( v_{j-1}^n, v_j^n \bigr)
= 
\label{scheme101} 
    \fracd{1}{\Delta x} 
 \left( \int_{v_{j-1}^n}^{v_j^n} 
  \Phi_+' (s)  \, \mathrm{d}s + 
\int_{v_{j}^n}^{v_{j+1}^n} 
  \Phi_-' (s)  \, \mathrm{d}s \right).  
  \end{align} 

For the ease of reference, we will refer to \eqref{marching}--\eqref{phipmdef} 
 and \eqref{scheme100}, \eqref{scheme101} as ``$v$-scheme'' and ``$u$-scheme'', respectively. 

\subsection{Uniform estimates on $\{v_j^n\}$ and $\{u_j^n\}$} \label{subsec:v_estimates} 

\begin{lemma}\label{scheme_for_v_is_monotone}  Under the CFL condition \eqref{cflcond}, the 
 $v$-scheme defined by \eqref{marching}--\eqref{phipmdef}  is monotone. 
\end{lemma}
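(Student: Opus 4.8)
The plan is to read the marching formula \eqref{marching} as a three-point update $v_j^{n+1}=\mathcal{H}_j(v_{j-1}^n,v_j^n,v_{j+1}^n)$ and to verify that $\mathcal{H}_j$ is nondecreasing in each of its three arguments; this is precisely the assertion that the $v$-scheme is monotone, i.e.\ that $\mathcal{H}$ is order-preserving componentwise. First I would expand the outer difference $\Delta_+$ in \eqref{marching} to obtain
\begin{align*}
 v_j^{n+1} = v_j^n - \lambda\bigl[h(v_j^n,v_{j+1}^n)-h(v_{j-1}^n,v_j^n)\bigr] + \lambda\bigl[A(q_{j+1})-A(q_j)\bigr],
\end{align*}
where I abbreviate $q_j := \Delta_- v_j^n/\Delta x=(v_j^n-v_{j-1}^n)/\Delta x$. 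Since $\Phi\in C^2$, the functions in \eqref{phipmdef} are $C^1$ with $\Phi_+'(v)=\max\{0,\Phi'(v)\}\ge 0$ and $\Phi_-'(v)=\min\{0,\Phi'(v)\}\le 0$, while $A$ is $C^1$ with $A'=a\ge 0$; hence $\mathcal{H}_j$ is $C^1$ and monotonicity reduces to checking that its three partial derivatives are nonnegative.

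For the two off-diagonal derivatives, differentiating and applying the chain rule through the arguments $q_j,q_{j+1}$ of $A$ gives
\begin{align*}
 \frac{\partial v_j^{n+1}}{\partial v_{j+1}^n} = -\lambda\,\Phi_-'(v_{j+1}^n)+\mu\,a(q_{j+1}), \qquad
 \frac{\partial v_j^{n+1}}{\partial v_{j-1}^n} = \lambda\,\Phi_+'(v_{j-1}^n)+\mu\,a(q_j),
\end{align*}
and both are manifestly nonnegative from the sign properties just noted, imposing no restriction on the time step. The diagonal derivative is the only one that constrains $\lambda,\mu$; collecting the contributions of $v_j^n$ yields
\begin{align*}
 \frac{\partial v_j^{n+1}}{\partial v_j^n} = 1 - \lambda\bigl(\Phi_+'(v_j^n)-\Phi_-'(v_j^n)\bigr) - \mu\bigl(a(q_j)+a(q_{j+1})\bigr).
\end{align*}
The decisive algebraic step is the identity $\Phi_+'(v)-\Phi_-'(v)=\max\{0,\Phi'(v)\}-\min\{0,\Phi'(v)\}=|\Phi'(v)|$, which turns this into $1-\lambda|\Phi'(v_j^n)|-\mu(a(q_j)+a(q_{j+1}))$.

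To finish, I would bound $|\Phi'(v_j^n)|\le\max_u|\Phi'(u)|$ and $a\le\max_u|a(u)|$ (recall $a\ge 0$), so that the diagonal derivative is at least $1-\lambda\max_u|\Phi'(u)|-2\mu\max_u|a(u)|$, which is nonnegative by the CFL condition \eqref{cflcond}; indeed the factor $2$ on the $\Phi'$ term there is more than monotonicity alone requires. With all three partials nonnegative, $\mathcal{H}_j$ is nondecreasing in each argument and the $v$-scheme is monotone. I expect the main obstacle to be purely a matter of careful sign bookkeeping in the chain rule: each of $v_{j-1}^n,v_j^n,v_{j+1}^n$ enters two of the discrete-gradient arguments $q_j,q_{j+1}$ with opposite signs, so the diffusive contributions $\pm\mu\,a(q_\bullet)$ must be tracked precisely; once they are, the monotonicity follows immediately.
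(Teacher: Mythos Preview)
Your proof is correct and follows essentially the same approach as the paper: write the update as a three-point map $\mathcal{H}_j(v_{j-1}^n,v_j^n,v_{j+1}^n)$, compute the three partial derivatives, and observe that the off-diagonal ones are nonnegative by the signs of $\Phi_\pm'$ and $a$, while the diagonal one is nonnegative under the CFL condition via the identity $\Phi_+'(v)-\Phi_-'(v)=|\Phi'(v)|$. Your remark that \eqref{cflcond} carries an extra factor~$2$ on the $\Phi'$ term beyond what monotonicity alone needs is also accurate.
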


\begin{proof} We rewrite  the scheme \eqref{marching} 
as 
\begin{align*} 
 v_j^{n+1} = \mathcal{H}  \bigl( v_{j-1}^{n},  v_{j}^{n},   v_{j+1}^{n}
\bigr) =: \mathcal{H}_j^n , \quad j\in\mathbb{Z}, \quad n=0,1, \dots, N-1.  
\end{align*} 
Since $a \geq 0$, we then have  
\begin{align*} 
\frac{\partial \mathcal{H}_j^n}{\partial v_{j\pm1}^n} & = 
\mp  \lambda \min \bigl\{ 0, \Phi' \bigl( v_{j\pm1}^n \bigr) 
 \bigr\} + \mu 
  a \bigl( \Delta_{\pm} v_{j}^n / \Delta x \bigr)
  \geq 0,  
\end{align*} 
while  the CFL condition \eqref{cflcond} implies that 
\begin{align*} 
\frac{\partial \mathcal{H}_j^n}{\partial v_{j}^n}&  =  
 1 - \lambda \bigl(  \max \bigl\{ 0, \Phi' \bigl( v_{j}^n \bigr)
 \} - \min   \bigl\{ 0, \Phi' \bigl( v_{j}^n \bigr)
 \} \bigr)  - \mu \Delta_+ 
  a \bigl( \Delta_-  v_{j}^n/ \Delta x \bigr)  \\
  & = 1 - \lambda \bigl| \Phi' \bigl( v_{j}^n \bigr) \bigr| - \mu \Delta_+ 
  a \bigl( \Delta_-  v_{j}^n/ \Delta x \bigr) 
\geq 0. 
\end{align*}
\end{proof}

As a monotone scheme, the scheme \eqref{marching} is total variation diminishing (TVD)  and monotonicity 
preserving. Since \eqref{marching} represents an explicit three-point scheme, 
for a fixed discretization $(\Delta x, \Delta t)$ we will always 
 have 
\begin{align} \label{wa}  
 v_j^n =0 \quad \text{for $j < -\mathcal{K}$}, \quad  
  v_j^n= C_0 \quad \text{for $j >\mathcal{K}$}
  \end{align}  
  for a sufficiently large constant $\mathcal{K}>0$. 
Thus, we can state the following corollary. 

\begin{corollary} \label{crol4.1}  If \eqref{monotinit}  and the CFL condition 
 \eqref{cflcond} hold,   then 
 the numerical solution $\{v_j^n\}$ produced by the $v$-scheme \eqref{marching}--\eqref{phipmdef}
satisfies 
\begin{align} 
0 \leq v_j^n \leq C_0, \quad v_j^n \leq v_{j+1}^n \quad \text{\em for all $j \in \mathbb{Z}$, $n=1, 
 \dots, N$.} 
\label{monotalways} 
\end{align} 
As a  direct consequence, 
 the numerical solution values $V^n = \smash{\{ v_j^n \}_{j \in \mathbb{Z}}}$ 
satisfy the (trivial)  uniform total variation bound    
\begin{align*}
\TV (V^n) = 
 \sum_{j\in\mathbb{Z}}\left|v^{n}_{j+1}-v^{n}_{j}\right|=C_0.
\end{align*}
\end{corollary}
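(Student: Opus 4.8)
The plan is to obtain both inequalities in \eqref{monotalways} from the monotonicity of the $v$-scheme established in Lemma~\ref{scheme_for_v_is_monotone}, combined with the translation invariance of the marching formula \eqref{marching}, and then to read off the total variation identity by telescoping. Throughout I write the update as $v_j^{n+1}=\mathcal{H}(v_{j-1}^n,v_j^n,v_{j+1}^n)$, where by Lemma~\ref{scheme_for_v_is_monotone} the map $\mathcal{H}$ is nondecreasing in each of its three arguments under the CFL condition \eqref{cflcond}. The immediate consequence I would record first is the discrete comparison principle: if two grid functions satisfy $v_j^n\le\tilde v_j^n$ for all $j$, then applying $\mathcal{H}$ and using its monotonicity gives $v_j^{n+1}\le\tilde v_j^{n+1}$ for all $j$.

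For the uniform bounds $0\le v_j^n\le C_0$, I would check that the spatially constant grid functions $v_j^n\equiv 0$ and $v_j^n\equiv C_0$ are exact fixed points of the scheme. Indeed $\Phi_+(c)+\Phi_-(c)=\int_0^c\Phi'(s)\,\mathrm{d}s$, so $h(c,c)=\Phi(c)$, whence the convective increment $\Delta_+h(v_{j-1}^n,v_j^n)$ vanishes on a constant state, as does the diffusive increment through $A(\Delta_-v_j^n/\Delta x)=A(0)=0$. Since \eqref{monotinit} provides $0\le v_j^0\le C_0$, comparing $\{v_j^n\}$ against these two constant solutions and inducting on $n$ via the comparison principle yields $0\le v_j^n\le C_0$ for all $j$ and $n$.

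For the spatial monotonicity $v_j^n\le v_{j+1}^n$ I would use that \eqref{marching} is invariant under translation of the index $j$, since neither $h$ nor $A$ depends on $j$ explicitly. Hence the shifted grid function $\tilde v_j^n:=v_{j+1}^n$ is again a solution of the scheme, and because $\tilde v_j^0=v_{j+1}^0\ge v_j^0$ by \eqref{monotinit}, the comparison principle propagates this ordering to give $v_{j+1}^n=\tilde v_j^n\ge v_j^n$ for all $n$; this is precisely the monotonicity-preserving property announced in the text preceding the corollary. The total variation identity then follows at once: spatial monotonicity makes every increment $v_{j+1}^n-v_j^n$ nonnegative, so the sum telescopes, and the compact-support behaviour \eqref{wa}, namely $v_j^n=0$ for $j<-\mathcal{K}$ and $v_j^n=C_0$ for $j>\mathcal{K}$, gives $\TV(V^n)=\sum_{j}(v_{j+1}^n-v_j^n)=C_0-0=C_0$.

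I do not anticipate any genuine obstacle, since the statement is essentially a bookkeeping consequence of Lemma~\ref{scheme_for_v_is_monotone}; the only points needing a little care are the verification that the constant states are exact fixed points (handled by the explicit Engquist--Osher structure of $h$) and the legitimacy of telescoping an infinite sum, which is guaranteed by the finite-support property \eqref{wa}.
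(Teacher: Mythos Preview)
Your proposal is correct and essentially fleshes out what the paper only asserts: the paper gives no explicit proof of the corollary, merely observing that a monotone scheme is TVD and monotonicity preserving and citing the compact-support property \eqref{wa}. Your comparison-principle argument (constant states as fixed points for the bounds, translation invariance for spatial monotonicity, telescoping via \eqref{wa} for the TV identity) is exactly the standard justification underlying those assertions.
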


\begin{lemma}\label{LC_respect_t}
The  numerical solution $\smash{ \{ v_j^n \}}$  
produced by the $v$-scheme \eqref{marching}--\eqref{phipmdef}
satisfies the 
 $L^1$ Lipschitz continuity in time property, i.e., 
 there exists a constant~$C_1$, which is  independent of $\Delta : = (
 \Delta x, \Delta t)$,  such that  
\begin{align} \label{ineq3.12} 
 \sum_{j\in\mathbb{Z}}\left| v^{n+1}_{j}-v^{n}_{j}\right|\leq
 C_1 \lambda.
\end{align}
\end{lemma}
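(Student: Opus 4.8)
The plan is to exploit that the $v$-scheme is \emph{autonomous} (the marching map does not depend on $n$) together with the monotonicity established in Lemma~\ref{scheme_for_v_is_monotone}. I first rewrite \eqref{marching} in conservative form $v_j^{n+1} = v_j^n - \lambda\bigl(F_{j+1/2}^n - F_{j-1/2}^n\bigr)$ with numerical flux $F_{j-1/2}^n := h(v_{j-1}^n, v_j^n) - A(\Delta_- v_j^n/\Delta x)$. I then regard the shifted sequence $p_j^n := v_j^{n+1}$ and the original sequence $q_j^n := v_j^n$ as two solutions of the \emph{same} marching map $\mathcal{H}$ (indeed $p^{n+1} = \mathcal{H}(p^n)$ and $q^{n+1} = \mathcal{H}(q^n)$). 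Because $\mathcal{H}$ is monotone and conservative, it is an $L^1$-contraction in the sense of the Crandall--Tartar lemma, so that $\sum_j |p_j^{n} - q_j^{n}| \le \sum_j |p_j^{0} - q_j^{0}|$. This reduces \eqref{ineq3.12} to the single initial-step estimate $\sum_j |v_j^1 - v_j^0| \le C_1\lambda$, with $C_1$ independent of $\Delta$; all sums are finite because of the finite-support property \eqref{wa}.

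For the first step one has $\sum_j|v_j^1 - v_j^0| = \lambda \sum_j |F_{j+1/2}^0 - F_{j-1/2}^0|$, i.e. $\lambda$ times the total variation of the initial numerical flux. I split the flux difference into its convective and diffusive parts,
\begin{align*}
F_{j+1/2}^0 - F_{j-1/2}^0 = \bigl[\Phi_+(v_j^0) - \Phi_+(v_{j-1}^0) + \Phi_-(v_{j+1}^0) - \Phi_-(v_j^0)\bigr] - \bigl[A(u_{j+1/2}^0) - A(u_{j-1/2}^0)\bigr],
\end{align*}
where $u_{j+1/2}^0 = \Delta_+ v_j^0/\Delta x \ge 0$, and estimate the two groups separately.

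For the convective part I use that $\Phi_+$ and $\Phi_-$ are Lipschitz with constant $\le \max|\Phi'|$ on the compact range $[0,C_0]$ occupied by $v^0$ (cf.\ \eqref{monotinit}). Each resulting sum is then controlled by the total mass: since $|v_j^0 - v_{j-1}^0| = \Delta x\, u_{j-1/2}^0$ and $\Delta x \sum_j u_{j-1/2}^0 = C_0$, I obtain $\sum_j |\Phi_+(v_j^0) - \Phi_+(v_{j-1}^0)| \le \max|\Phi'|\,C_0$, and analogously for $\Phi_-$, giving a convective contribution $\le 2\max|\Phi'|\,C_0$. For the diffusive part, $A$ is Lipschitz with constant $\max|a|$ on the range of the slopes $u_{j+1/2}^0 \in [0,\|u_0\|_{L^\infty}]$, whence $\sum_j |A(u_{j+1/2}^0) - A(u_{j-1/2}^0)| \le \max|a|\,\TV(U^0) \le \max|a|\,\TV(u_0) < \max|a|\,\mathcal{M}$, using that cell averaging does not increase total variation together with the assumption \eqref{initial_data_stability}. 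Collecting the estimates yields $C_1 = 2\max|\Phi'|\,C_0 + \max|a|\,\mathcal{M}$, independent of $\Delta$.

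The only genuinely nonroutine ingredient is the $L^1$-contraction reduction in the first paragraph, namely verifying that the autonomous monotone conservative map $\mathcal{H}$ satisfies $\sum_j |\mathcal{H}(p)_j - \mathcal{H}(q)_j| \le \sum_j |p_j - q_j|$ for sequences differing summably; this is precisely where the monotonicity of Lemma~\ref{scheme_for_v_is_monotone} and the conservative structure of the scheme are used. Once it is in place, the initial-step bound is a direct computation. I also need to be careful that $\max|\Phi'|$ and $\max|a|$ are taken over the bounded ranges actually occupied by $\{v_j^n\}$ and by the slopes $\{u_{j+1/2}^n\}$ (finite by \eqref{monotalways} and the $L^\infty$ bound on $u_0$), so that the Lipschitz estimates are legitimate.
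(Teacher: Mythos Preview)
Your proposal is correct and follows essentially the same strategy as the paper: both arguments reduce \eqref{ineq3.12} to the single inequality $\sum_j|v_j^{n+3/2}-v_j^{n+1/2}|\le\sum_j|v_j^{n+1/2}-v_j^{n-1/2}|$ (non-increase of time increments) and then bound the first step $\sum_j|v_j^1-v_j^0|$ via the $\TV$ bound \eqref{initial_data_stability} on $u_0$. The only difference is in how the non-increase is justified: the paper writes the recursion for $w_j^{n+1/2}:=v_j^{n+1}-v_j^n$ explicitly, introduces incremental coefficients $B_j^{n+1/2},C_j^{n+1/2},D_j^{n+1/2}$, checks their signs, and sums; you instead invoke the Crandall--Tartar lemma for the autonomous monotone conservative map~$\mathcal{H}$ applied to the two time-translates $p^n=V^{n+1}$ and $q^n=V^n$. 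Your route is cleaner and makes the role of Lemma~\ref{scheme_for_v_is_monotone} more transparent, while the paper's direct computation is self-contained and avoids the (minor) care you correctly flag about applying Crandall--Tartar to sequences that are not in $\ell^1$ but merely differ summably (handled via \eqref{wa}). The initial-step estimates are effectively identical.
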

\begin{proof} For $j \in \mathbb{Z}$, the 
 quantity 
$\smash{ w^{n+1/2}_{j}:=v^{n+1}_{j}-v^{n}_{j}}$
 satisfies   
\begin{align} \label{eq4.14} \begin{split} 
 w^{n+3/2}_{j}-w^{n+1/2}_{j}&= -\lambda \Delta_+ \bigl[ h
 \bigl(v^{n+1}_{j-1},v^{n+1}_{j}\bigr)-h \bigl(v^{n}_{j-1},v^{n}_{j}\bigr)\bigr] \\
& \quad + \lambda \Delta_+ \bigl[A\bigl( \Delta_- v^{n+1}_{j} / \Delta
   x \bigr) - A\bigl( \Delta_- v^{n}_{j} / \Delta
   x \bigr) \bigr].  
   \end{split} 
\end{align}
We define 
\begin{align*} 
 \theta (s) := \begin{cases} 1/s & \text{if $s \neq 0$,} \\
 0 & \text{otherwise,} 
 \end{cases} 
 \end{align*} 
and  the  quantities
\begin{align} \label{considerq} \begin{split} 
 B^{n+1/2}_j &:= 
 \bigl[ h\bigl(v^{n}_{j-1},v^{n+1}_{j}\bigr)-h\bigl(v^{n}_{j-1},v^{n}_{j}\bigr) \bigr] \theta\bigl(v^{n+1}_{j}-v^{n}_{j}
  \bigr), \\
  C^{n+1/2}_j &:= 
 \bigl[ h \bigl(v^{n+1}_{j},v^{n+1}_{j+1}\bigr)-h\bigl(v^{n}_{j},v^{n+1}_{j+1}\bigr)
  \bigr] \theta \bigl( v^{n+1}_{j}-v^{n}_{j} \bigr), \\ 
  D^{n+1/2}_j &:=
 \bigl[ A \bigl(\Delta_+ v^{n+1}_{j} / \Delta x \bigr)-
 A  \bigl(\Delta_+ v^{n}_{j} / \Delta x \bigr) \bigr]   
 \theta \bigl(  \Delta_+ v^{n+1}_{j}-  \Delta_+ v^{n}_{j} \bigr) .  
 \end{split} \end{align}
Due to the monotonicity  of $\smash{ \{v_j^n\}}$ (see \eqref{monotalways}) we have 
\begin{equation}\label{sing_of_coeff}
 C^{n+1/2}_j \geq 0, \quad  D^{n+1/2}_j \geq 0 , \quad B^{n+1/2}_j\leq 0.  
\end{equation}
After some manipulations and using \eqref{monotalways} we obtain from \eqref{eq4.14} 
\begin{align*}
w^{n+3/2}_{j}&=w^{n+1/2}_{j}\bigl[ 1-\lambda C^{n+1/2}_j+ \lambda B^{n+1/2}_j- 
\lambda \bigl( D^{n+1/2}_{j-1}+D^{n+1/2}_{j}\bigr)\bigr]  \\
& \quad + w^{n+1/2}_{j-1} \lambda \bigl(C^{n+1/2}_{j-1}+ D^{n+1/2}_{j-1}\bigr) 
 + w^{n+1/2}_{j+1} \lambda \bigl( - B^{n+1/2}_{j+1}+ D^{n+1/2}_{j}\bigr).
\end{align*}
Using the CFL condition we find 
\begin{align*} 
\bigl|w^{n+3/2}_{j}\bigr|&\leq \bigl|w^{n+1/2}_{j}\bigr|
\bigl[ 1-\lambda \bigl( C^{n+1/2}_j-   B^{n+1/2}_j + D^{n+1/2}_{j-1}
+D^{n+1/2}_{j}\bigr)\bigr]  \\
&\quad +\bigl|w^{n+1/2}_{j-1}\bigr| \lambda \bigl(  C^{n+1/2}_{j-1}+ D^{n+1/2}_{j-1}\bigr) 
+ \bigl|w^{n+1/2}_{j+1}\bigr| \lambda \bigl(- B^{n+1/2}_{j+1}+
D^{n+1/2}_{j}\bigr).
\end{align*}
Summing this over $j\in\mathbb{Z}$, using \eqref{sing_of_coeff} and \eqref{monotalways} we obtain
\begin{align*}
\sum_{j\in\mathbb{Z}}\bigl|w^{n+3/2}_j\bigr|\leq \sum_{j\in\mathbb{Z}}\bigl|w^{n+1/2}_j\bigr|,
\end{align*}
which implies that 
\begin{align*}
\sum_{j\in\mathbb{Z}}\bigl|w^{n+3/2}_j\bigr|\leq \sum_{j\in\mathbb{Z}}\bigl|w^{1/2}_j\bigr|.
\end{align*}
From \eqref{marching} with $n=0$ we get 
\begin{align*}
 \sum_{j\in\mathbb{Z}}\bigl|w^{1/2}_j\bigr| &=\sum_{j\in\mathbb{Z}}\bigl|v^{1}_j-v^{0}_j\bigr| 
 =\sum_{j\in\mathbb{Z}} \lambda 
  \bigl| \Delta_+ \bigl(  
 h \bigl( v_{j-1}^0 ,  v_j^0 \bigr) - A \bigl(\Delta_-
   v_j^0 / \Delta x  \bigr) \bigr) \bigr|.
 \end{align*} 
Using \eqref{initial_data_stability}   we
 arrive at  \eqref{ineq3.12}. 
 \end{proof}

\begin{lemma}\label{L_infinity_u} 
The  numerical solution $\smash{ \{ v_j^n \}}$  
produced by the $v$-scheme \eqref{marching}--\eqref{phipmdef}
satisfies the inequality 
    $\smash{| \Delta _+ v^{n}_{j} / \Delta x |\leq C_3}$
     with a constant~$C_3$, which is  independent of~$\Delta$.  Equivalently,  the 
   solution  $\smash{ \{ u_{j+1/2}^n \}}$  
 generated by the $u$-scheme 
  \eqref{scheme100}, \eqref{scheme101} satisfies the 
  uniform  $L^{\infty}$ bound 
    \begin{align} 
      \bigl|  u_{j+1/2}^n \bigr| \leq C_3 \quad 
       \text{\em for all $j \in \mathbb{Z}$, $n =0, \dots, N$.} 
    	\end{align} 
\end{lemma}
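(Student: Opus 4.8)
The plan is to work directly with the $u$-scheme \eqref{scheme100}, \eqref{scheme101}, using that by the definition of $\mathcal{S}_{\Delta x}^{-1}$ in \eqref{sdx} the two quantities in the statement coincide, $u_{j+1/2}^n = (v_{j+1}^n - v_j^n)/\Delta x = \Delta_+ v_j^n/\Delta x$, and that the monotonicity \eqref{monotalways} (Corollary~\ref{crol4.1}) already gives $u_{j+1/2}^n \ge 0$; hence only a uniform \emph{upper} bound on $M^n := \max_{j} u_{j+1/2}^n$ is needed. First I would rewrite the convective flux with the Engquist--Osher splitting \eqref{eoflux}, \eqref{phipmdef}: writing $\Phi_+(v_j^n) - \Phi_+(v_{j-1}^n) = p_{j-1/2}^n\,\Delta_- v_j^n$ and $\Phi_-(v_{j+1}^n) - \Phi_-(v_j^n) = q_{j+1/2}^n\,\Delta_+ v_j^n$ with slopes $0 \le p_{j-1/2}^n \le \max|\Phi'|$ and $-\max|\Phi'| \le q_{j+1/2}^n \le 0$ (use $\theta$ as in the proof of Lemma~\ref{LC_respect_t}), the scheme takes the incremental form
\begin{align*}
u_{j+1/2}^{n+1} &= u_{j+1/2}^{n} + \lambda\bigl(p_{j-1/2}^n u_{j-1/2}^n - p_{j+1/2}^n u_{j+1/2}^n\bigr) \\
&\quad + \lambda\bigl(q_{j+1/2}^n u_{j+1/2}^n - q_{j+3/2}^n u_{j+3/2}^n\bigr) + \mu\,\Delta^2 A\bigl(u_{j+1/2}^n\bigr).
\end{align*}
Under \eqref{cflcond} every off-diagonal coefficient is non-negative, while the sum of all coefficients equals $1 + \lambda(p_{j-1/2}^n - p_{j+1/2}^n) - \lambda(q_{j+3/2}^n - q_{j+1/2}^n)$.

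The next step is to evaluate this at a cell where $M^n$ is attained. Since $A$ is non-decreasing, the diffusive second difference there satisfies $\Delta^2 A(u_{j+1/2}^n) \le 0$. For the convective part I would use $\Phi \in C^2$, so the slopes are Lipschitz: $|p_{j-1/2}^n - p_{j+1/2}^n| \le \max|\Phi''|\,(v_{j+1}^n - v_{j-1}^n) = \max|\Phi''|\,\Delta x\,(u_{j+1/2}^n + u_{j-1/2}^n)$, and similarly for $q$. This produces a \emph{discrete Riccati} inequality $M^{n+1} \le M^n + C\,\Delta t\,(M^n)^2$ with $C \sim \max|\Phi''|$. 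This is precisely the obstruction: a naive maximum principle controls $M^n$ only on a short time interval, mirroring the genuine finite-time aggregation blow-up that occurs when $A \equiv 0$. Hence the degenerate diffusion \emph{must} be exploited, and this is where assumption \eqref{aincr} enters.

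To beat the quadratic growth I would combine two facts. By Corollary~\ref{crol4.1} the total mass is conserved, $\sum_j u_{j+1/2}^n \Delta x = C_0$ (see \eqref{Cdef}), so whenever $M^n$ is large the set on which $u$ is comparable to $M^n$ must be narrow; consequently the discrete diffusive flux $\Delta_+ A(u_{j+1/2}^n)/\Delta x$ across the edge of that region is of order $A(M^n)\cdot M^n/C_0$, and the associated second difference $\mu\,\Delta^2 A$ is negative of order $A(M^n)(M^n)^2/C_0^2$. By \eqref{aincr}, $A(M^n) \to \infty$, this dissipation dominates the convective growth $\max|\Phi''|\,(M^n)^2$ as soon as $A(M^n) \gtrsim \max|\Phi''|\,C_0^2$. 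I would therefore \emph{define} the threshold $C_3$ by $A(C_3) := K\max|\Phi''|\,C_0^2$ for a suitable absolute constant $K$, so that $C_3$ depends only on $A$, $\Phi$ and $C_0$ and not on $\Delta$, and then show that $M^n$ cannot cross the level $C_3$: above it the dissipation wins and $M^{n+1} \le M^n$. Together with the initial bound $M^0 \le \TV(u_0) < \mathcal{M}$ (valid since $u_0$ is non-negative, compactly supported and of bounded variation), this yields $M^n \le \max\{\mathcal{M}, C_3\}$ for all $n = 0,\dots,N$, a bound independent of $\Delta$, which is the assertion.

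The hard part will be making the ``conservation forces concentration, hence dissipation dominates'' step rigorous at the discrete, non-local level. A one-cell maximum-principle argument is insufficient exactly when the maximum sits on a broad plateau, where $\Delta^2 A$ of the central cell vanishes and yet the convective increment is still positive. I expect the clean route is to sum the scheme over the whole super-level set $\{j : u_{j+1/2}^n > \tfrac12 M^n\}$ (or to run a discrete $L^p$ energy/De Giorgi iteration and let $p \to \infty$), so that mass conservation enters through the measure of this set, \eqref{aincr} enters through $A(M^n)$, and the telescoping of the diffusive fluxes at the two endpoints of the set supplies the dissipation offsetting $\max|\Phi''|\,(M^n)^2$. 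Everything else---nonnegativity, the incremental form, the CFL sign conditions, and the Lipschitz bound on the Engquist--Osher slopes---is routine.
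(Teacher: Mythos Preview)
Your approach is genuinely different from the paper's, and the gap you yourself flag is real: you never actually close the ``conservation forces concentration, hence dissipation dominates'' step, and at the discrete level this is not a routine matter. The plateau obstruction you describe is exactly the problem---at a cell where $M^n$ is attained and the neighbours are equal, $\Delta^2 A$ vanishes while the convective increment is still $\mathcal{O}(\Delta t\,(M^n)^2)$, so a one-cell argument cannot win. Summing over a super-level set does give you a boundary diffusive flux, but the transition width from $M^n/2$ to $M^n$ is not controlled by mass conservation alone (mass conservation bounds the width of the whole super-level set, not the steepness of its edges), so your estimate ``diffusive second difference $\sim A(M^n)(M^n)^2/C_0^2$'' is heuristic at best. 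A De Giorgi/$L^p$ iteration might be salvageable, but it is far from the short argument the lemma deserves.

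The paper bypasses the maximum principle entirely with a two-line flux bound. From the marching formula \eqref{marching}, the combination $A(\Delta_+ v_j^n/\Delta x)-h(v_j^n,v_{j+1}^n)$ telescopes from $j=-\infty$ (where it equals $-\Phi(0)$ by \eqref{wa}) and picks up exactly $\lambda^{-1}\sum_{k\le j}(v_k^{n+1}-v_k^n)$; by Lemma~\ref{LC_respect_t} this sum is $\le C_1$ uniformly in $\Delta$. Since $v_j^n\in[0,C_0]$ by Corollary~\ref{crol4.1}, the Engquist--Osher flux $h$ is bounded by $\sup_{[0,C_0]}|\Phi|$, hence $|A(u_{j+1/2}^n)|\le C_2$ with $C_2$ independent of $\Delta$. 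Now assumption \eqref{aincr}---$A(u)\to\infty$ as $u\to\infty$---is used simply to \emph{invert} this bound: $u_{j+1/2}^n\le A^{-1}(C_2)=:C_3$. No Riccati inequality, no super-level sets, no competition between convection and diffusion; the $L^1$ time-Lipschitz estimate does all the work, and \eqref{aincr} enters only at the very end to convert a bound on $A(u)$ into a bound on $u$.
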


\begin{proof}
It is sufficient  to show that 
$\smash{A ( \Delta _+ v^{n}_{j} / \Delta x )   \leq C_2}$  
 for a   constant~$C_2$ that  is  independent of~$\Delta$.  
Taking into account   \eqref{wa} we get  
\begin{align*}
 &  \bigl|A \bigl(   \Delta _+ v^{n}_{j} / \Delta x \bigr) \bigr|-
 \bigl|h \bigl(v^{n}_{j},v^{n}_{j+1}\bigr)\bigr| \\&\leq
 \bigl|A \bigl(   \Delta _+ v^{n}_{j} / \Delta x \bigr)- h\bigl(v^{n}_{j},v^{n}_{j+1}\bigr)\bigr| \\
&= \left|\Phi (0) + \sum^{j}_{k=-\infty}	  \Delta_- \bigl( 
A \bigl(   \Delta _+ v^{n}_{k} / \Delta x \bigr)- h\bigl(v^{n}_{k},v^{n}_{k+1}\bigr)\bigr) 
\right|\\
&=\left|\sum^{j}_{k=-\infty}\frac{v^{n+1}_{k}-v^{n}_{k}}{\lambda}+\Phi(0)\right|
 \leq \fracd{1}{\lambda} \sum_{k \in \mathbb{Z}} \bigl| v^{n+1}_{k}-v^{n}_{k} \bigr| + 
  \bigl|\Phi(0) \bigr|.
\end{align*}
Due to   Lemma~\ref{LC_respect_t}, we see that  
$\smash{ |A( \Delta_+ v^{n}_{j} / \Delta x ) | \leq C_2}$
 if we choose $C_2=C_1+ |\Phi(0) |$.
 Taking into account \eqref{aincr}  concludes the proof. 
\end{proof}

\begin{remark} 
 Under all the previous assumptions we see that 
 the $v$-scheme \eqref{marching}--\eqref{phipmdef} satisfies  
 the following discrete cell entropy inequality
\begin{align*}
\left|v^{n+1}_{j}-k\right|-\left|v^{n}_{j}-k\right|+\lambda \Delta_+ H^{n}_{j-1/2}
 -\lambda  \sgn \bigl(v^{n+1}_{j}-k\bigr)
  \Delta_+ A\bigl(\Delta_-v^{n}_{j} / \Delta x\bigr)
    \leq 0, 
\end{align*}
where $k \in \mathbb{R}$ and  a numerical entropy flux 
 is defined by  
\begin{align*}
 H^n_{j+1/2}=H^n_{j+1/2}\bigl(v^n_j,v^n_{j+1},k\bigr):=
  h\bigl(v^{n}_{j}\vee k,v^{n}_{j+1}\vee k\bigr)-h\bigl(v^{n}_{j}\wedge k,
 v^{n}_{j+1}\wedge k\bigr).
\end{align*}
By a  standard Lax-Wendroff-type argument  we  conclude that 
 as $\Delta \to 0$, 
 the piecewise constant functions assuming 
  the value $\smash{v_j^n}$ on $\smash{I_{j+1/2}\times [t_n, t_{n+1})}$ 
     converge to a limit function~$v$  that 
 for all non-negative test functions $\varphi 
 \in C_0^{\infty} ( \Pi_T)$ satisfies  
the entropy inequality
\begin{align*} \forall k \in \mathbb{R}: \quad 
\iint_{\Pi_T}  \Bigl\{  |v-k|\varphi_t+ \sgn (v-k)
 \bigl[ \Phi(v)-\Phi(k)-A(v_x)\bigr]\varphi_x\Bigr\} 
 \, \mathrm{d}x \,  \mathrm{d}t\geq 0. 
\end{align*}
However, since under the present assumptions $v(\cdot, t)$ is 
 a monotone smooth function, the entropy satisfaction property is 
not needed here.   

\end{remark}

\begin{lemma} \label{lemm4.4} 
The   solution  $\smash{ \{ u_{j+1/2}^n \}}$  
 generated by the $u$-scheme 
  \eqref{scheme100}, \eqref{scheme101} satisfies the 
   following inequality, where  the  
 constant $C_4$ 
is  independent of~$\Delta$: 
\begin{align*}
\TV \bigl( A(U^n) \bigr) = 
 \sum_{j\in\mathbb{Z}}\bigl| \Delta_+ A\bigl(u^{n}_{j-1/2} \bigr)\bigr|\leq C_4.
\end{align*}
\end{lemma}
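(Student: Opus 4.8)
The plan is to establish an exact discrete identity relating the spatial difference $\Delta_+ A(u_{j-1/2}^n)$ to the numerical convective flux and the temporal increment of $\{v_j^n\}$, and then to sum over $j$ and invoke the estimates already available. The starting point is the observation that, by the definition of $\mathcal{S}_{\Delta x}^{-1}$ in \eqref{sdx}, one has $u_{j\pm1/2}^n = \Delta_\pm v_j^n/\Delta x$, so that the diffusion terms appearing in the $v$-scheme are exactly $A(u_{j-1/2}^n)$ and $A(u_{j+1/2}^n)$.

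First I would expand the forward difference in the marching formula \eqref{marching}. Writing the bracket there as $h(v_{j-1}^n,v_j^n) - A(u_{j-1/2}^n)$ and applying $\Delta_+$ gives
\[
v_j^{n+1}-v_j^n = -\lambda\bigl[\Delta_+ h(v_{j-1}^n,v_j^n) - \Delta_+ A(u_{j-1/2}^n)\bigr],
\]
and a trivial rearrangement produces the key identity
\[
\Delta_+ A\bigl(u_{j-1/2}^n\bigr) = \Delta_+ h\bigl(v_{j-1}^n,v_j^n\bigr) + \frac{1}{\lambda}\bigl(v_j^{n+1}-v_j^n\bigr).
\]
Taking absolute values, summing over $j\in\mathbb{Z}$ and using the triangle inequality then separates $\TV(A(U^n))$ into a flux contribution and a temporal contribution.

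The temporal contribution is controlled immediately: by the $L^1$ Lipschitz-in-time estimate of Lemma~\ref{LC_respect_t} we have $\lambda^{-1}\sum_j|v_j^{n+1}-v_j^n|\le C_1$. For the flux contribution I would use the representation \eqref{scheme101}, which gives $\Delta_+ h(v_{j-1}^n,v_j^n) = \int_{v_{j-1}^n}^{v_j^n}\Phi_+'(s)\,\mathrm{d}s + \int_{v_j^n}^{v_{j+1}^n}\Phi_-'(s)\,\mathrm{d}s$. Because $\{v_j^n\}$ is non-decreasing by \eqref{monotalways} while $\Phi_+'\ge 0\ge\Phi_-'$ by \eqref{phipmdef}, the two integrals have fixed and opposite signs, whence $|\Delta_+ h(v_{j-1}^n,v_j^n)|\le \int_{v_{j-1}^n}^{v_j^n}\Phi_+'(s)\,\mathrm{d}s - \int_{v_j^n}^{v_{j+1}^n}\Phi_-'(s)\,\mathrm{d}s$. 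The intervals $[v_{j-1}^n,v_j^n]$ (and likewise $[v_j^n,v_{j+1}^n]$) tile $[0,C_0]$ as $j$ ranges over $\mathbb{Z}$, again by monotonicity together with the far-field values \eqref{wa}; hence both sums telescope and
\[
\sum_{j\in\mathbb{Z}}\bigl|\Delta_+ h(v_{j-1}^n,v_j^n)\bigr| \le \int_0^{C_0}\bigl(\Phi_+'(s)-\Phi_-'(s)\bigr)\,\mathrm{d}s = \int_0^{C_0}\bigl|\Phi'(s)\bigr|\,\mathrm{d}s,
\]
a constant depending only on $\Phi$ and $C_0$. Combining the two contributions yields the assertion with $C_4 := C_1 + \int_0^{C_0}|\Phi'(s)|\,\mathrm{d}s$.

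The computation is short, and its only genuinely delicate point is the sign bookkeeping in the last step: one must exploit the monotonicity of the discrete primitive to fix the signs of the two Engquist--Osher integrals and to guarantee that the intervals tile $[0,C_0]$, so that the telescoping is exact and $\Delta$-independent. The identity in the second step, by contrast, is a direct rearrangement of the scheme and needs no summation by parts; it is the structural fact that reduces a bound on $\TV(A(U^n))$ to the already-established time regularity of $\{v_j^n\}$.
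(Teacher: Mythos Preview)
Your proof is correct and follows essentially the same route as the paper's: both rearrange the marching formula \eqref{marching} to isolate $\Delta_+ A(u_{j-1/2}^n)$ as a sum of the flux increment $\Delta_+ h$ and the temporal increment $\lambda^{-1}(v_j^{n+1}-v_j^n)$, and then control the latter via Lemma~\ref{LC_respect_t}. The only difference is in bounding $\sum_j|\Delta_+ h(v_{j-1}^n,v_j^n)|$: the paper uses a Lipschitz estimate $|\Delta_+ h|\le\|\Phi'\|_\infty(|\Delta_+ v_j^n|+|\Delta_- v_j^n|)$ together with $\TV(V^n)=C_0$, whereas your telescoping argument via the Engquist--Osher integral representation yields the slightly sharper constant $\int_0^{C_0}|\Phi'(s)|\,\mathrm{d}s$.
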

\begin{proof}
 Using the marching formula  \eqref{marching} we can write
\begin{align*}
 \bigl| \Delta_+ A \bigl(u^{n}_{j-1/2} \bigr)\bigr|&\leq \fracd{1}{\lambda}  \bigl|v^{n+1}_{j}-v^{n}_{j}
  \bigr|+\bigl|\Delta_+ h \bigl(v^{n}_{j-1},v^{n}_{j}\bigr)\bigr| \\
  & \leq \fracd{1}{\lambda}   \bigl|v^{n+1}_{j}-v^{n}_{j}
  \bigr| + \bigl|  \bigl[ h \bigl(v^{n}_{j},v^{n}_{j+1}\bigr)-h\bigl(v^{n}_{j},v^{n}_{j}
 \bigr) \bigr] \theta \bigl( v^{n}_{j+1}-v^{n}_{j} \bigr)\bigr| \bigl|\Delta_+ v^{n}_{j}\bigr| 
 \\ & \quad  + \bigl|  \bigl[ h \bigl(v^{n}_{j},v^{n}_{j} \bigr)-h \bigl(v^{n}_{j-1},v^{n}_{j}\bigr)
  \bigr] \theta \bigl( v^{n}_{j}-v^{n}_{j-1} \bigr) \bigr|  \bigl| \Delta_- v^{n}_{j} \bigr|.
\end{align*}
Summing over $j\in\mathbb{Z}$ yields 
\begin{displaymath}
 \sum_{j\in\mathbb{Z}}\bigl| \Delta_+ A\bigl(u^{n}_{j-1/2} \bigr)\bigr|\leq \frac{1}{\lambda} 
  \sum_{j\in\mathbb{Z}}\bigl| v^{n+1}_{j}-v^{n}_{j} \bigr|+2\Vert \Phi'\Vert_{\infty}\sum_{j\in\mathbb{Z}}
  \bigl| \Delta_+ v^{n}_{j} \bigr|.
\end{displaymath}
The right-hand side is uniformly 
 bounded  due to  Lemma~\ref{LC_respect_t} and  
  Corollary~\ref{crol4.1}. 
\end{proof}

 Lemma~\ref{lemm4.4} does, in general, not permit to establish 
a uniform bound on the spatial total variation $\TV(U^n)$ of 
 the solution values $\smash{ \{ u_{j+1/2}^n \}}$ generated by the $u$-scheme. 
  This is possible only in the special case that $A(\cdot)$ is strictly 
   increasing as a function of~$u$, and $a(u)$ vanishes at most at 
   isolated values of~$u$. 
   
   We  now prove   that  $\TV(U^n)$ 
    is nevertheless uniformly bounded, but by a bound that depends 
    on the final time~$T$.   
Our analysis  
 will appeal to assumption \eqref{phiass}. 
  From \eqref{wa} and  \eqref{monotalways} we deduce 
   that if $\smash{\{v_j^n \}}$ is the numerical solution 
    produced by the $v$-scheme \eqref{marching}--\eqref{phipmdef}, 
 then at each time level 
    there exists a unique index~$k$ such that $\smash{v^n_k<v^*\leq v^n_{k+1}}$. 
The following lemma informs about the behavior of this index with each time iteration. 

\begin{lemma} \label{position_of_v^*} Assume that the 
 data $\smash{ \{v_j^n\}_{j \in \mathbb{Z}} }$ and  $\smash{ \{v_j^{n+1} \}_{j \in \mathbb{Z}} }$ 
 have been produced by the $v$-scheme  \eqref{marching}--\eqref{phipmdef} starting from the monotone data 
  $\smash{ \{ v_j^0 \}_{j \in \mathbb{Z}}}$ under the CFL condition  
   \eqref{cflcond}. Let   $k, \bar{k} \in \mathbb{Z}$ be 
  the uniquely defined indices that satisfy  
$\smash{v^{n}_{k}<v^*\leq v^{n}_{k+1}}$ and 
$\smash{v^{n+1}_{\bar{k}}<v^*\leq v^{n+1}_{\bar{k}+1}}$, respectively. 
 Then $\smash{\bar{k} \in \{ k-1,k,k+1\}}$.
 \end{lemma}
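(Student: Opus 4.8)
The plan is to exploit the two structural properties of the $v$-scheme already established: that the update map $\mathcal{H}(a,b,c)$ is nondecreasing in each of its three arguments (Lemma~\ref{scheme_for_v_is_monotone}, under the CFL condition \eqref{cflcond}), and that it is consistent, so that $\mathcal{H}(c,c,c)=c$ for every constant $c$. The latter is immediate from the marching formula \eqref{marching}, since on constant data every divided difference of~$h$ and of~$A(\Delta_- v_j^n/\Delta x)$ vanishes. Combined with the spatial monotonicity $v_j^n\le v_{j+1}^n$ from Corollary~\ref{crol4.1}, these properties yield the claim at once; in fact the same argument shows that the crossing of \emph{any} prescribed level value moves by at most one cell per time step, so the special value~$v^*$ plays no distinguished role in the estimate itself.

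First I would reduce the statement to two sign conditions. Since $\smash{\{v_j^{n+1}\}}$ is again monotone nondecreasing, the index~$\bar{k}$ is simply the largest~$j$ with $v_j^{n+1}<v^*$. Consequently $\bar{k}\ge k-1$ is equivalent to $v_{k-1}^{n+1}<v^*$, and $\bar{k}\le k+1$ is equivalent to $v_{k+2}^{n+1}\ge v^*$. Establishing these two inequalities then gives $k-1\le\bar{k}\le k+1$, which is exactly $\bar{k}\in\{k-1,k,k+1\}$.

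For the first inequality I would bound $v_{k-1}^{n+1}=\mathcal{H}(v_{k-2}^n,v_{k-1}^n,v_k^n)$ from above. Since $v_{k-2}^n\le v_{k-1}^n\le v_k^n$ by \eqref{monotalways}, the componentwise monotonicity of~$\mathcal{H}$ gives $v_{k-1}^{n+1}\le\mathcal{H}(v_k^n,v_k^n,v_k^n)=v_k^n<v^*$, where the middle equality is consistency and the final strict inequality is the defining property of~$k$. Symmetrically, for the second inequality I would bound $v_{k+2}^{n+1}=\mathcal{H}(v_{k+1}^n,v_{k+2}^n,v_{k+3}^n)$ from below: since $v_{k+2}^n,v_{k+3}^n\ge v_{k+1}^n$, monotonicity of~$\mathcal{H}$ yields $v_{k+2}^{n+1}\ge\mathcal{H}(v_{k+1}^n,v_{k+1}^n,v_{k+1}^n)=v_{k+1}^n\ge v^*$.

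The main thing to resist is the temptation to expand \eqref{marching} and estimate the Engquist--Osher fluxes and diffusion differences directly near~$v^*$; that route is feasible but heavy, whereas comparison against the constant states~$v_k^n$ and~$v_{k+1}^n$ reduces each half of the proof to a single line. The one step that deserves explicit care is the logical translation between the index inequalities for~$\bar{k}$ and the sign conditions on $v_{k-1}^{n+1}$ and $v_{k+2}^{n+1}$, which hinges on the monotonicity of the updated profile $\smash{\{v_j^{n+1}\}}$; assumption \eqref{phiass} enters only to guarantee the existence and uniqueness of the crossing indices~$k$ and~$\bar{k}$, and not the estimate proper.
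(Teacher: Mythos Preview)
Your proof is correct and rests on the same two inequalities $v_{k-1}^{n+1}\le v_k^n<v^*$ and $v_{k+2}^{n+1}\ge v_{k+1}^n\ge v^*$ that the paper invokes, obtained from monotonicity of the three-point update together with consistency $\mathcal{H}(c,c,c)=c$. The paper then finishes with a short case distinction on whether $v^*<v_{k+1}^n$ or $v^*=v_{k+1}^n$, which your characterization of~$\bar k$ as the largest index with $v_j^{n+1}<v^*$ renders unnecessary; otherwise the arguments coincide.
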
 
\begin{proof}
 Since $v^{n}_{k}<v^*\leq v^{n}_{k+1}$ we analyze two cases: $v^{n}_{k}<v^*< v^{n}_{k+1}$ and $v^{n}_{k}<v^*=v^{n}_{k+1}$. In the first, 
 the monotonicity of the $v$-scheme 
   and \eqref{monotalways} imply that 
\begin{align*}
 v^{n+1}_{k-1}\leq v^{n}_{k}<v^*<v^{n}_{k+1}\leq v^{n+1}_{k+2}, 
\end{align*}
such that either 
$\smash{v^{n+1}_{k-1}<v^*\leq v^{n+1}_{k}}$, or 
 $\smash{v^{n+1}_{k}<v^*\leq v^{n+1}_{k+1}}$, or 
  $\smash{v^{n+1}_{k+1}<v^*< v^{n+1}_{k+2}}$,  
which means that  $\bar{k}=\left\lbrace k-1,k,k+1\right\rbrace$. In the second, 
 we find that 
\begin{align*}
 v^{n+1}_{k-1}\leq v^{n}_{k}<v^*=v^{n}_{k+1}\leq v^{n+1}_{k+2}, 
\end{align*}
so either 
$\smash{v^{n+1}_{k-1}<v^*= v^{n+1}_{k}}$, or $\smash{v^{n+1}_{k}<v^*\leq v^{n+1}_{k+1}}$, 
  or $\smash{v^{n+1}_{k+1}<v^*\leq  v^{n+1}_{k+2}}$.  We conclude 
   the proof by noting that 
  $v^{n+1}_{k+2}<v^*$ is  impossible due to the monotonicity of the 
   $v$-scheme   and \eqref{monotalways}.
  \end{proof}

The next lemma states the 
 announced bound on $\TV (U^n)$.

\begin{lemma}\label{u_is_TV_stable} Assume that  
 the CFL condition \eqref{cflcond} is satisfied. Then 
  there exist constants~$C_5$ and~$C_6$, 
   which are independent of~$\Delta$, such that the 
    solution values $\smash{U^n= \{u_{j+1/2}^n \}_{j \in \mathbb{Z}}}$ satisfy the 
     uniform total variation bound  
\begin{align}  \label{tvun} 
 \TV ( U^n)   = \sum_{j \in \mathbb{Z}} 
  \bigl| u_{j+1/2}^n - u_{j-1/2}^n \bigr|  \leq \bigl(C_5 + \TV (U^0) \bigr) \exp (C_6 T), 
   \quad n=1, \dots,   N. 
   \end{align}    
\end{lemma}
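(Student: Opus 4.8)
The plan is to establish a one-step recursion of the form
\begin{align*}
\TV(U^{n+1}) \le (1 + C_6 \Delta t)\,\TV(U^n) + C_5' \Delta t,
\end{align*}
and then to close the argument by a discrete Gronwall inequality over the $N = T/\Delta t$ time levels. Writing $g_j^n := u_{j+1/2}^n - u_{j-1/2}^n$, so that $\TV(U^n) = \sum_{j \in \mathbb{Z}} \abs{g_j^n}$ and $g_j^n = \Delta^2 v_j^n/\Delta x$, the first step is to apply the backward difference operator $\Delta_-$ to the $u$-scheme \eqref{scheme100}, \eqref{scheme101} (equivalently, to difference the $v$-scheme \eqref{marching} twice and divide by $\Delta x$) in order to obtain a marching formula for $\{g_j^n\}$. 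This yields the exact identity
\begin{align*}
g_j^{n+1} = g_j^n - \lambda\,\Delta^2 G_j^n + \mu\,\Delta^2\bigl(\bar a_j g_j^n\bigr),
\end{align*}
where $\{G_j^n\}$ is the numerical convective term \eqref{scheme101} and $\bar a_j \ge 0$ is identified below; the two contributions are analysed separately.

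Next I would treat the diffusion and the ``generic'' part of the convection. For the diffusion term the mean value theorem gives $A(u_{j+1/2}^n) - A(u_{j-1/2}^n) = \bar a_j g_j^n$ with $\bar a_j = a(\xi_j) \ge 0$, so this term is a discrete Laplacian of $\{\bar a_j g_j^n\}$ with nonnegative off-diagonal weights. For the convection term I would exploit the Engquist--Osher structure \eqref{eoflux}, \eqref{phipmdef}: writing the interface flux as $h_j = \Phi(0) + \Phi_+(v_j^n) + \Phi_-(v_{j+1}^n)$ and using $\Phi_\pm(v_{j+1}^n) - \Phi_\pm(v_j^n) = \Delta x\, u_{j+1/2}^n \cdot (\text{average of } \Phi_\pm' \text{ over } [v_j^n, v_{j+1}^n])$, one extracts nonnegative transport coefficients $c_{j+1/2} \ge 0$ (the averaged $\Phi_+'$, giving rightward upwinding) and $\abs{d_{j+1/2}} \ge 0$ (the averaged $-\Phi_-'$, giving leftward upwinding). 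Assembling everything into a Harten-type incremental form $g_j^{n+1} = \alpha_j^- g_{j-1}^n + \alpha_j^0 g_j^n + \alpha_j^+ g_{j+1}^n + (\text{curvature}) + E_j^n$, the CFL condition \eqref{cflcond} makes $\alpha_j^\pm \ge 0$ and $\alpha_j^0 \ge 0$ (since $c, \abs{d} \le \max\abs{\Phi'}$ and $\bar a_j \le \max\abs{a}$), and the nonnegative transport weights cancel pairwise upon summation over $j$ (reflecting conservativity), so they produce no growth of $\sum_j \abs{g_j}$.

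There are then exactly two sources of growth, and I would isolate them carefully. The curvature terms arise from the cell-to-cell variation of the averaged slopes $c_{j+1/2}, d_{j+1/2}$; using $\Phi \in C^2$ and the uniform bound $\abs{u_{j+1/2}^n} \le C_3$ of Lemma~\ref{L_infinity_u}, each such increment is $O(\Delta x)$ and splits into a part bounded by $C_6 \Delta t \abs{g_{j-1}^n}$, with $C_6 \sim \max\abs{\Phi''} \cdot C_3$, which after summation is the Gronwall term, and a remainder whose $\ell^1$-norm over $j$ is $O(\Delta t)$. The hard part will be the critical index $k$ characterised by $v_k^n < v^* \le v_{k+1}^n$: there $\Phi'$ changes sign, the Engquist--Osher upwind direction switches, and the clean monotone sign structure of the $\alpha_j^\pm$ breaks down, so that $E_j^n$ need not have a favourable sign. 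This is precisely where assumption \eqref{phiass} and Lemma~\ref{position_of_v^*} enter: the latter guarantees that the transition index moves by at most one cell per step ($\bar k \in \{k-1, k, k+1\}$), so that the anomalous contribution $E_j^n$ is supported on $O(1)$ cells around $k$. On those cells $v_j^n$ lies within $O(\Delta x)$ of $v^*$, whence $\Phi'(v_j^n) = O(\Delta x)$ because $\Phi'(v^*) = 0$ and $\Phi \in C^2$; combined with $\abs{u} \le C_3$ this bounds $\sum_j \abs{E_j^n}$ by $C_5' \Delta t$, uniformly in $\Delta$.

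Finally I would collect the estimates. Taking absolute values in the incremental form, using the nonnegativity of $\alpha_j^\pm, \alpha_j^0$ and the telescoping of the transport weights, and summing over $j \in \mathbb{Z}$ (the sums are finite by \eqref{wa}), yields the advertised recursion $\TV(U^{n+1}) \le (1 + C_6 \Delta t)\TV(U^n) + C_5' \Delta t$. Iterating and using $(1 + C_6 \Delta t)^n \le \exp(C_6 t_n) \le \exp(C_6 T)$ together with the summation of the resulting geometric series gives \eqref{tvun} with $C_5 = C_5'/C_6$. The dependence of the bound on the final time $T$ is genuine and reflects the aggregation mechanism, through which the drift associated with $\Phi''(v)u$ continually compresses the profile of $u$; the single additive constant $C_5$ accounts for the total variation created at the moving sonic cell.
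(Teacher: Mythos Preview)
Your approach is essentially the same as the paper's: both derive the one-step recursion $\TV(U^{n+1}) \le (1+C_6\Delta t)\TV(U^n) + C_7\Delta t$ by writing the differenced scheme for $w_j^n:=\Delta_+ u_{j-1/2}^n$ in incremental form with nonnegative coefficients (under the CFL condition) away from the sonic index~$k$, treating the $O(1)$ cells near~$k$ separately using $\Phi'(v^*)=0$ and $|v_j^n-v^*|=O(\Delta x)$ there, and then applying discrete Gronwall. The paper makes the three-region splitting $\mathcal{A}\cup\mathcal{B}\cup\mathcal{C}$ explicit and carries out Taylor expansions of $\Delta^2\Phi(v_j^n)$ case by case; you phrase the same computation in Harten-TVD language. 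The content is identical.

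One small correction: Lemma~\ref{position_of_v^*} is not actually needed here, and your stated reason for invoking it is off. That the anomalous contribution $E_j^n$ is supported on $O(1)$ cells follows simply from the three-point stencil of the Engquist--Osher flux together with the uniqueness of~$k$ at the \emph{fixed} time level~$n$; how the sonic index moves from step~$n$ to step~$n{+}1$ plays no role in this one-step estimate. The paper's proof of the present lemma does not use Lemma~\ref{position_of_v^*} either; it is invoked only later, in the Lax--Wendroff argument of Theorem~\ref{th4.1}, where one must compare contributions at consecutive time levels.
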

\begin{proof}
 From \eqref{scheme100} 
 we obtain 
\begin{align*}
\Delta_+ u^{n+1}_{j-1/2}&=\Delta_+ u^{n}_{j-1/2}- \mu \Delta_+ 
\Delta^2 h \bigl(v^{n}_{j-1},v^{n}_{j}\bigr)
 +\mu \Delta_+ \Delta^2  A \bigl(u^{n}_{j-1/2} \bigr).
\end{align*}
Let~$k$ be the index such that $v^n_k<v^*\leq v^n_{k+1} $, 
 and let us split $\mathbb{Z}$ into  the subsets 
 \begin{align} \label{usefulsets} \begin{split} 
 \mathcal{A} & := \mathcal{A}^n := \{ j\in\mathbb{Z} \, | \,  j\leq k-2\}, \\
 \mathcal{B}& := \mathcal{B}^n := \{ j\in 
  \mathbb{Z} \, | \, k-2< j\leq k+2\}, \\ 
  \mathcal{C}&:=\mathcal{C}^n := \{ j\in  \mathbb{Z} \, | \, 
   k+2<j \}.
  \end{split} \end{align}   
Let $\smash{  w^{n}_{j}:=\Delta_+ u^{n}_{j-1/2}}$ and 
 $\smash{a^n_j:=
\Delta_+ A(u^{n}_{j-1/2}) \theta (  \Delta_+ u^{n}_{j-1/2} )}$.
For $j\in \mathcal{A}$, we obtain 
\begin{align} \label{jinaident} 
  w^{n+1}_{j}= w^{n}_{j}- \mu \Delta_- \Delta^2 \Phi  \bigl(v_j^n) 
  + \mu \Delta^2 \bigl( a_j^n w_j^n \bigr).  
\end{align}
 Using a Taylor expansion about $v^n_j$ we find that there exist 
numbers $\smash{\alpha^n_{j}\in[v^n_j,v^n_{j+1}]}$ and $\smash{\beta^n_{j}\in[v^n_{j-1},v^n_{j}]}$
 such that 
\begin{align*}
\Delta^2 \Phi \bigl(v^{n}_{j}\bigr)=\Phi'(v^{n}_{j})w^{n}_{j}\Delta x+\frac{1}{2} \Phi''\bigl(\alpha^n_{j}\bigr)
\bigl(\Delta_+ v^{n}_{j}\bigr)^2+ \frac{1}{2} \Phi''\bigl(\beta^n_{j}\bigr) \bigl(\Delta_- v^{n}_{j} \bigr)^2.
\end{align*}
Substituting this into \eqref{jinaident} we obtain 
\begin{align*}
 w^{n+1}_{j}&=w^{n}_{j}-\lambda \Delta_- \bigl( \Phi'\bigl(v^n_{j}\bigr)w^n_{j}\bigr) 
+  \mu  \Delta^2 \bigl( a^{n}_{j}w^{n}_{j} \bigr) -\frac{\mu}{2}  \Delta_- \bigl( \Phi'' \bigl( \alpha^n_{j}\bigr)\bigl(\Delta_+ v^{n}_{j} \bigr)^2\bigr) \\
& \quad -\frac{\mu}{2}  \Delta_- \bigl( \Phi'' \bigl( \beta^n_{j}\bigr)\bigl(\Delta_- v^{n}_{j}\bigr)^2\bigr)\\
& =w^{n}_{j}-\lambda \Delta_- \bigl( \Phi'\bigl(v^n_{j}\bigr)w^n_{j}\bigr) 
+  \mu  \Delta^2 \bigl( a^{n}_{j}w^{n}_{j} \bigr) \\
& \quad  -\frac{\mu}{2}\Bigl( \Delta_- \Phi''\bigl(\alpha^n_{j}\bigr)\bigl(\Delta_+ v^{n}_{j}\bigr)^2+ \Phi''\bigl(\alpha^n_{j-1}\bigr)\bigl(v^{n}_{j+1}-v^{n}_{j-1}\bigr)w^n_{j}\Delta x \\
& \qquad 
  + \Delta_- \Phi''\bigl(\beta^n_{j}\bigr) \bigl(\Delta_- v^{n}_{j}\bigr)^2+ 
  \Phi''\bigl(\beta^n_{j-1}\bigr) \bigl(v^{n}_{j}-v^{n}_{j-2}\bigr)w^n_{j-1}\Delta x\Bigr)
 \\&=w^{n}_{j}\bigl[ 1-\lambda\Phi'\bigl(v^n_{j}\bigr)-2\mu a^{n}_{j}\bigr]
 +w^{n}_{j-1}\bigl[\mu a^{n}_{j-1}+\lambda \Phi'\bigl(v^n_{j-1}\bigr)\bigr]+\mu w^{n}_{j+1}
  a^{n}_{j+1} \\
&\quad +\mathcal{O}(\Delta t) \bigl(w^{n}_{j-1}+w^{n}_{j}+ \Delta_+ v^{n}_{j}   +  \Delta_-  v^{n}_{j} \bigr).
\end{align*}
In an analogous way, we find for $j\in \mathcal{C}$
\begin{align*}
 w^{n+1}_{j}&=w^{n}_{j}\bigl[ 1+\lambda\Phi'\bigl(v^n_{j}\bigr)-2\mu a^{n}_{j}\bigr]
 +w^{n}_{j+1}\bigl[\mu a^{n}_{j+1}-\lambda \Phi'\bigl(v^n_{j+1}\bigr)\bigr]+\mu 
 w^{n}_{j-1} a^{n}_{j-1} \\
&\quad +\mathcal{O}(\Delta t) \bigl(w^{n}_{j}+w^{n}_{j+1}
 +  \Delta_+ v^{n}_{j}  + \Delta_- v^{n}_{j}\bigr).
\end{align*}
Now we deal with $j\in \mathcal{B}$. For $j=k-1$, using that $v^*$ is a maximum of~$\Phi$ and following analogous steps as before,  we get 
 \begin{align*}
 w^{n+1}_{k-1}&=w^{n}_{k-1}-\mu \bigl(\Phi(v^{n}_{k+1})-\Phi(v^{*})+\Delta_- \Delta^2 \Phi  \bigl(v_{k-1}^n\bigr)\bigr) 
 + \mu \Delta^2 \bigl( a_{k-1}^n w_{k-1}^n \bigr)\\
 & = w^{n}_{k-1}-\mu \bigl(\Phi'(\xi)\bigl(v^{n}_{k+1}-v^{*}\bigr)+\Delta_- \Delta^2 \Phi  \bigl(v_{k-1}^n\bigr)\bigr)+ \mu \Delta^2 \bigl( a_{k-1}^n w_{k-1}^n\bigr)\\
 &= w^{n}_{k-1}-\mu \bigl( \bigl(\Phi'(\xi)-\Phi'(v^*)\bigr)
  \bigl(v^{n}_{k+1}-v^{*}\bigr)+\Delta_- \Delta^2 \Phi  \bigl(v_{k-1}^n\bigr)\bigr) \\ & \quad 
  + \mu \Delta^2 \bigl( a_{k-1}^n w_{k-1}^n\bigr)\\
 &= w^{n}_{k-1}\bigl[ 1-\lambda\Phi'\bigl(v^n_{k-1}\bigr)-2\mu a^{n}_{k-1}\bigr]
 +w^{n}_{k-2}\bigl[\mu a^{n}_{k-2}+\lambda \Phi'\bigl(v^n_{k-2}\bigr)\bigr]+\mu w^{n}_{k}
  a^{n}_{k} \\
  &\quad +\mathcal{O}(\Delta t)\bigl( 1+ w^{n}_{k-2}+w^{n}_{k-1}+
  \Delta_+ v^{n}_{k-1}   + \Delta_-  v^{n}_{k-1}  \bigr). 
  \end{align*}
For $j=k$, using that $\Phi'(v^*)=0$ we compute
\begin{align*}
w^{n+1}_{k}&=w^{n}_{k} -\mu \bigl[\Phi \bigl(v^{n}_{k+2} \bigr)-2\Phi(v^{n}_{k+1})+\Phi(v^{n}_{k}) -\left\lbrace \Phi(v^{n}_{k})-2\Phi(v^{n}_{k-1})+\Phi\bigl(v^{n}_{k-2}\bigr)\right\rbrace\bigr]\\
&\quad -\mu\bigl[ \Phi(v^{n}_{k-1})-\Phi(v^{n}_{k})+2\bigl(\Phi(v^{*})-\Phi(v^{n}_{k})\bigr)+\Phi(v^{*})-\Phi(v^{n}_{k+1})\bigr] 
\\ &\quad 
 + \mu \Delta^2 \bigl( a_{k}^n w_{k}^n \bigr)\\
&=w^{n}_{k} -\mu \bigl[\Delta_+\Delta^2\Phi(v^{n}_{k})+\Delta_-\Delta^2\Phi(v^{n}_{k})\bigr]+ \mu \Delta^2 \bigl( a_{k}^n w_{k}^n \bigr)\\
&\quad -\mu\bigl[ \Phi \bigl(v^{n}_{k-1}\bigr)-\Phi(v^{n}_{k}) 
+2\bigl(\Phi(v^{*})-\Phi(v^{n}_{k})\bigr)+\Phi(v^{*})-\Phi\bigl(v^{n}_{k+1}\bigr)\bigr]\\
&= w^{n}_{k}\bigl( 1-2\mu a^{n}_{k}\bigr)
 +w^{n}_{k-1}\bigl[\mu a^{n}_{k-1}+\lambda \Phi'\bigl(v^n_{k-1}\bigr)\bigr]+w^{n}_{k+1}\bigl[\mu a^{n}_{k+1}-\lambda \Phi'\bigl(v^n_{k+1}\bigr)\bigr]\\
 &\quad +\mathcal{O}(\Delta t) \bigl(1+ 
 w^{n}_{k-1}+w^{n}_{k}+w^{n}_{k+1} + \Delta_+ v^{n}_{k}   +  \Delta_-  v^{n}_{k}  \bigr).
\end{align*}
For $j=k+1$ and $j=k+2$,  the following 
steps are analogous to the  previous  cases. Using  that $\Phi'(v^*)=0$ we obtain
\begin{align*}
w^{n+1}_{k+1}&=w^{n}_{k+1}-\mu\bigl[ \Delta_+\Delta^2\Phi(v^{n}_{k+1})+3(\Phi(v^{n}_{k})-\Phi(v^{*}))+\Phi(v^{n}_{k})-\Phi(v^{n}_{k-1})\bigr]\\
&\quad + \mu \Delta^2 \bigl( a_{k+1}^n w_{k+1}^n \bigr)\\
&=w^{n}_{k+1}\bigl[ 1+\lambda\Phi'\bigl(v^n_{k+1}\bigr)-2\mu a^{n}_{k+1}\bigr]
 +w^{n}_{k}\mu a^{n}_{k}+w^{n}_{k+2}\bigl[\mu a^{n}_{k+2}-\lambda \Phi'\bigl(v^n_{k+2}\bigr)\bigr]\\
 &\quad +\mathcal{O}(\Delta t) \bigl(1+w^{n}_{k+1}+w^{n}_{k+2}
  + \Delta_+ v^{n}_{k+1}   + \Delta_-  v^{n}_{k+1}  \bigr), \\
   w^{n+1}_{k+2}&=w^{n}_{k+2}-\mu \bigl[\Delta_+\Delta^2\Phi(v^{n}_{k+2})+\Phi(v^{n}_{k})-\Phi(v^*)\bigr]+ \mu \Delta^2 \bigl( a_{k+2}^n w_{k+2}^n \bigr)\\
&=w^{n}_{k+2}\bigl[ 1+\lambda\Phi'\bigl(v^n_{k+2}\bigr)-2\mu a^{n}_{k+2}\bigr]
 +w^{n}_{k+3}\bigl[\mu a^{n}_{k+3}-\lambda \Phi'\bigl(v^n_{k+3}\bigr)\bigr]+\mu 
 w^{n}_{k+1} a^{n}_{k+1} \\
&\quad +\mathcal{O}(\Delta t) \bigl(1+ w^{n}_{k+2}+w^{n}_{k+3} 
+ \Delta_+ v^{n}_{k+2}   +  \Delta_- v^{n}_{k+2}\bigr).
\end{align*}
Finally, summing over $j$ we find that there exist constants~$C_6$ and~$C_7$
 such that   
\begin{align*}
 \sum_{j\in \mathbb{Z}}\bigl|w^{n+1}_{j}\bigr|\leq&\sum_{j\in \mathbb{Z}}\left|w^{n}_{j}\right|(1+
  C_6\Delta t)+ C_7 \Delta t,
\end{align*}
which implies that  
\begin{align*}
\sum_{j\in \mathbb{Z}}\left|w^{n+1}_{j}\right|\leq &\sum_{j\in \mathbb{Z}
}\left|w^{0}_{j}\right|\exp(C_6 T)+\frac{C_7}{C_6}\exp(C_6 T),  
\end{align*}
which proves \eqref{tvun}. 
\end{proof}

The next lemma states $L^1$ H\"{o}lder continuity with respect to the variable $t$ of the solution generated  by \eqref{scheme100}.

\begin{lemma}\label{L1_holder_continuity}
The   solution  $\smash{ \{ u_{j+1/2}^n \}}$  
 generated by the $u$-scheme 
  \eqref{scheme100}, \eqref{scheme101} satisfies the 
   following inequality, where  the  
 constant $C_8$ 
is  independent of~$\Delta$:
\begin{align} \label{c8ineq} 
 \sum_{j\in\mathbb{Z}}\bigl|u^{m}_{j+1/2}-u^{n}_{j+1/2}\bigr|\Delta x\leq&\,C_8\sqrt{\Delta t(m-n)} \quad \text{\em for $m>n$, $m,n\in\mathbb{N}_0$}.
\end{align}
\end{lemma}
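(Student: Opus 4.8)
The plan is to deduce \eqref{c8ineq} from two ingredients: a \emph{weak} (distributional) time-continuity estimate for the $u$-scheme that is linear in the elapsed time and involves only one derivative of the test function, and an interpolation argument (duality plus mollification) that upgrades this weak bound to the strong $L^1$ bound, at the cost of passing from exponent $1$ to the stated exponent $1/2$.

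First I would establish the weak estimate: for any smooth, compactly supported $\phi$ with discretized values $\phi_j$, and $m>n$,
\[
\Bigl|\sum_{j\in\mathbb{Z}}\bigl(u^m_{j+1/2}-u^n_{j+1/2}\bigr)\phi_j\,\Dx\Bigr|\le C\,(m-n)\Dt\,\norm{\phi'}_{\infty}.
\]
To see this I would telescope \eqref{scheme100} over the levels $l=n,\dots,m-1$, multiply by $\phi_j\,\Dx$, sum over $j$ (all sums are finite by \eqref{wa}), and perform a discrete summation by parts. The convective contribution $-\lambda\Delta_+ G^l_j$ produces, after one summation by parts, a term controlled by $\Dt\,\norm{\phi'}_{\infty}\,\Dx\sum_j|G^l_j|$; since the Engquist--Osher flux $h$ is Lipschitz and $\TV(V^l)=C_0$ by Corollary~\ref{crol4.1}, one has $\Dx\sum_j|G^l_j|\le 2\norm{\Phi'}_{\infty}C_0$. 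The essential point concerns the diffusive contribution $\mu\Delta^2 A(u^l_{j+1/2})$: I would sum by parts only \emph{once}, writing the per-level term as $-\mu\,\Dx\sum_j\Delta_- A(u^l_{j+1/2})\,\Delta_-\phi_j$, so that only the first difference of $\phi$ appears. Using $\mu\,\Dx^2=\Dt$ and $\sum_j|\Delta_- A(u^l_{j+1/2})|=\TV(A(U^l))\le C_4$ from Lemma~\ref{lemm4.4}, this term is bounded by $\Dt\,C_4\norm{\phi'}_{\infty}$. Summing over the $(m-n)$ levels gives the weak estimate with $C=2\norm{\Phi'}_{\infty}C_0+C_4$.

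Next I would run the interpolation step. Writing $d_j:=u^m_{j+1/2}-u^n_{j+1/2}$ and letting $g$ denote the associated piecewise constant function, I use the duality $\sum_j|d_j|\Dx=\sup_{\norm{\zeta}_{\infty}\le 1}\int g\zeta\dx$ together with a standard mollifier $\omega_{\eps}$ to obtain
\[
\sum_{j}|d_j|\,\Dx\le\int\bigl|g-g*\omega_{\eps}\bigr|\dx+\sup_{\norm{\zeta}_{\infty}\le 1}\int g\,(\zeta*\omega_{\eps})\dx.
\]
The first term is controlled by $\eps\bigl(\TV(U^m)+\TV(U^n)\bigr)$ via $\norm{f-f*\omega_{\eps}}_{L^1}\le\eps\,\TV(f)$, and both total variations are uniformly bounded by Lemma~\ref{u_is_TV_stable}. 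For the second term, the test function $\phi:=\zeta*\omega_{\eps}$ satisfies $\norm{\phi'}_{\infty}=\norm{\zeta*\omega_{\eps}'}_{\infty}\le\norm{\omega_{\eps}'}_{L^1}=\order{\eps^{-1}}$, so the weak estimate gives a bound $C\,(m-n)\Dt\,\order{\eps^{-1}}$. Choosing $\eps\sim\sqrt{(m-n)\Dt}$ balances the two contributions and yields the claimed $\order{\sqrt{(m-n)\Dt}}$ bound.

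The main obstacle is the diffusive term, and specifically the requirement that the weak estimate depend on $\norm{\phi'}_{\infty}$ \emph{only}, and not on $\norm{\phi''}_{\infty}$. Had I summed by parts twice in the diffusion, a second derivative would appear; since $\norm{(\zeta*\omega_{\eps})''}_{\infty}=\order{\eps^{-2}}$, the optimization would then degrade the exponent from $1/2$ to $1/3$. It is precisely the uniform control of $\TV(A(U^l))$ afforded by Lemma~\ref{lemm4.4} (rather than any uniform control of $\TV(U^l)$, which is unavailable in this strongly degenerate setting except through the $T$-dependent bound of Lemma~\ref{u_is_TV_stable}) that permits the single summation by parts and hence the sharp $1/2$ rate. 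The remaining work is routine bookkeeping: tracking the cell-centered versus nodal indexing in the summations by parts, verifying the elementary mollification estimate, and checking that all sums are finite owing to the compact support guaranteed by \eqref{wa}.
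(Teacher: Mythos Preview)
Your proposal is correct and follows essentially the same route as the paper: first the weak time-continuity estimate via a single summation by parts on both the convective and diffusive terms (the latter relying on Lemma~\ref{lemm4.4}), then the Kru\v{z}kov-type interpolation using the uniform $\TV$ bound on $U^n$ from Lemma~\ref{u_is_TV_stable}. The paper simply cites \cite[Lemma~3.6]{ek00} for the interpolation step you spell out in full, and your emphasis on summing by parts only once in the diffusion (so that only $\norm{\phi'}_{\infty}$ appears) is exactly the point that makes the $1/2$ exponent work.
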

\begin{proof}
We first establish weak Lipschitz continuity in the time variable. To this end, let $\phi(x)$ be a test function and $\phi_j:=\phi(j\Delta x)$. Multiplying equation \eqref{scheme100} by $\phi_j\Delta x$, summing over $n$ and $j$ and applying  a summation by parts, we get
\begin{align*}
 \Biggl|\Delta x\sum_{j\in\mathbb{Z}}\phi_j\bigl(u^{n+1}_{j+1/2}-u^{n}_{j+1/2}\bigr)\Biggr|&= \Biggl|\Delta t\sum_{j\in\mathbb{Z}}G^n_j\left(\phi_{j}-\phi_{j-1}\right)\Biggr|\\
&\quad +\Biggl|\lambda\sum_{j\in\mathbb{Z}}\left(\phi_{j}-\phi_{j-1}\right)\bigl(A\bigl(u^{n}_{j+1/2}\bigr)-A\bigl(u^{n}_{j-1/2}\bigr)\bigr)\Biggr|.
\end{align*}
Using Lemma \ref{lemm4.4} and the fact that $\phi$ is smooth we obtain
\begin{align*}
 \Biggl|\Delta x\sum_{j\in\mathbb{Z}}\phi_j\bigl(u^{n+1}_{j+1/2}-u^{n}_{j+1/2}\bigr)\Biggr|\leq C\Vert \phi'\Vert\Delta t, 
\end{align*}
where $C$ is independent of $\Delta$ and $\phi$. Consequently, for $m>n$ the following weak continuity result holds: 
\begin{align*}
 \Biggl|\Delta x\sum_{j\in\mathbb{Z}}\phi_j\bigl(u^{m}_{j+1/2}-u^{n}_{j+1/2}\bigr)\Biggr|\leq& C\Vert \phi'\Vert\Delta t(m-n).
\end{align*}
Since $\smash{E_j:=u^{m}_{j+1/2}-u^{n}_{j+1/2}}$ has bounded variation on $\mathbb{R}$, we arrive 
at the inequality \eqref{c8ineq}  by 
 proceeding as in \cite[Lemma 3.6]{ek00}. 
  \end{proof}

Now, following ideas of \cite{kr01} we prove an $L^2$ estimate for $A'(\cdot)_x$.

\begin{lemma}\label{L^2_A_x}
 The   solution  $\smash{ \{ u_{j+1/2}^n \}}$  
 generated by the $u$-scheme 
  \eqref{scheme100}, \eqref{scheme101} satisfies the 
   following inequality, where  the  
 constant $C_{9}$ 
is  independent of~$\Delta$:
\begin{align}
 \sum_{n=1}^N \sum_{j\in\mathbb{Z}}\left(\frac{\Delta_-  A(u^{n}_{j+1/2}) }{\Delta x}\right)^2\Delta t\Delta x\leq C_{9}.
\end{align}
\end{lemma}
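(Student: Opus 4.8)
The plan is to run a discrete energy (quadratic--entropy) argument tailored to the explicit $u$-scheme, choosing the entropy so that its numerical dissipation is \emph{exactly} the quantity to be bounded. Since $a\geq 0$, the function $\Psi(u):=\int_0^u A(s)\,\mathrm{d}s$ satisfies $\Psi'=A$ and $\Psi''=a\geq 0$, so $\Psi$ is convex and nonnegative on $[0,\infty)$. The first step is to multiply the scheme \eqref{scheme100} by $\Delta x\,A(u^n_{j+1/2})$ and sum over $j\in\mathbb{Z}$ and $n=0,\dots,N-1$. On the left-hand side I use the exact Taylor identity
\begin{align*}
A\bigl(u^n_{j+1/2}\bigr)\bigl(u^{n+1}_{j+1/2}-u^n_{j+1/2}\bigr)
=\Psi\bigl(u^{n+1}_{j+1/2}\bigr)-\Psi\bigl(u^n_{j+1/2}\bigr)
-\tfrac12 a\bigl(\xi^n_j\bigr)\bigl(u^{n+1}_{j+1/2}-u^n_{j+1/2}\bigr)^2,
\end{align*}
where $\xi^n_j$ lies between $u^n_{j+1/2}$ and $u^{n+1}_{j+1/2}$; the first two terms telescope in~$n$.

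The second step is a discrete summation by parts in~$j$ (with the usual index bookkeeping between the cell values $A(u_{j+1/2})$ and the node quantities $G_j$), which is legitimate because by \eqref{wa} the data $\{v^n_j\}$, hence $\{u^n_{j+1/2}\}$, are compactly supported uniformly in the relevant space--time region. The diffusion term then produces exactly the negative of the target dissipation,
\begin{align*}
\mu\,\Delta x\sum_{j\in\mathbb{Z}}A\bigl(u^n_{j+1/2}\bigr)\,\Delta_+\Delta_-A\bigl(u^n_{j+1/2}\bigr)
=-\sum_{j\in\mathbb{Z}}\Delta t\,\Delta x\Bigl(\frac{\Delta_-A(u^n_{j+1/2})}{\Delta x}\Bigr)^2,
\end{align*}
while the convective term, after summing by parts, becomes a cross term of the form $\lambda\sum_{n,j}G^n_j\,\Delta_-A(u^n_{j+1/2})$. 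Collecting terms, I arrive at an identity of the schematic form
\begin{align*}
\sum_{n=1}^N\sum_{j\in\mathbb{Z}}\Delta t\,\Delta x\Bigl(\frac{\Delta_-A(u^n_{j+1/2})}{\Delta x}\Bigr)^2
=\sum_{j\in\mathbb{Z}}\Delta x\bigl(\Psi(u^0_{j+1/2})-\Psi(u^N_{j+1/2})\bigr)+\mathcal{C}+\mathcal{R},
\end{align*}
where $\mathcal{C}$ is the convective cross term and $\mathcal{R}:=\tfrac12\sum_{n,j}\Delta x\,a(\xi^n_j)\bigl(u^{n+1}_{j+1/2}-u^n_{j+1/2}\bigr)^2$ is the remainder. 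The telescoping term is bounded by $\Psi(C_3)\,\abs{\supp u_0}$ using $\Psi\geq 0$ and the uniform $L^{\infty}$ bound $C_3$ from Lemma~\ref{L_infinity_u}. For $\mathcal{C}$ I apply Young's inequality, absorbing a fraction of the dissipation into the left-hand side and leaving $C\sum_{n,j}\Delta t\,\Delta x\,(G^n_j)^2$; since $|G^n_j|\leq 2\Vert\Phi'\Vert_{\infty}C_3$ by Lemma~\ref{L_infinity_u} (via the bound on $\Delta_\pm v/\Delta x$) and the space--time region carrying the support is uniformly bounded, this is $\order{1}$.

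The main obstacle is the remainder $\mathcal{R}$, which reflects the fact that the explicit scheme dissipates the entropy only up to a second-order-in-time defect. Here I would insert \eqref{scheme100} into $\bigl(u^{n+1}_{j+1/2}-u^n_{j+1/2}\bigr)^2$ and split it into its convective and diffusive contributions. The convective contribution is controlled exactly as $\mathcal{C}$ above, hence $\order{1}$. The diffusive contribution is the delicate one: using $(\Delta_+\Delta_-A)^2\leq 2(\Delta_+A)^2+2(\Delta_-A)^2$ one finds that it is bounded by a constant multiple of~$\mu$ times the target dissipation appearing on the left, so that the CFL condition \eqref{cflcond}, which forces $2\mu\max|a|\leq 1$ and hence bounds~$\mu$, allows this term to be absorbed into the left-hand side (at the cost of sharpening the crude constants, or slightly strengthening the CFL bound). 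Once $\mathcal{R}$ is absorbed, the three bounded contributions combine to give the asserted estimate with a constant $C_9$ depending only on $T$, $\Vert\Phi'\Vert_{\infty}$, $\max|a|$, $C_3$, $\Psi(C_3)$ and the size of $\supp u_0$, all independent of~$\Delta$.
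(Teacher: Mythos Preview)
Your strategy—testing the scheme against $A(u^n_{j+1/2})$ so that the entropy is $\Psi(u)=\int_0^u A(s)\,\mathrm{d}s$ rather than $\tfrac12 u^2$—is a genuine and attractive alternative to the paper's proof. The paper multiplies \eqref{scheme100} by $u^n_{j+1/2}\Delta x$; after summation by parts the diffusive term becomes $\lambda\sum(\Delta_-A)(\Delta_-u)$, and one must then invoke the pointwise inequality $(\Delta_-A)(\Delta_-u)\ge\tfrac{1}{a^*}(\Delta_-A)^2$ (valid since $A'=a\ge 0$) to extract the desired dissipation with a factor $1/a^*$. Your choice produces the dissipation $(\Delta_-A)^2$ directly, with coefficient~$1$, which is neater. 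Both routes then face the same second-order-in-time remainder and, exactly as you anticipate, both require the strengthened CFL condition~\eqref{CFL_eps} (with $4\mu\max a$ rather than $2\mu\max a$) to absorb the diffusive part of~$\mathcal{R}$; the paper makes this explicit.

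There is, however, a real gap in your bound for $\mathcal{C}$ and for the convective part of $\mathcal{R}$. You assert that ``the space--time region carrying the support is uniformly bounded,'' and use this to conclude $\Delta t\,\Delta x\sum_{n,j}(G^n_j)^2=\mathcal{O}(1)$. This is false under the parabolic scaling $\Delta t=\mu\Delta x^2$ in force here: the three-point $v$-scheme enlarges the support of $\{u^n_{j+1/2}\}$ by one cell per step, hence by $N\Delta x=T/\lambda=T/(\mu\Delta x)\to\infty$ in physical length as $\Delta x\downarrow 0$. Your $\ell^\infty$--times--measure estimate therefore blows up.

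The repair is easy with the tools already established. For $\mathcal{C}$ you do not need Young at all: since $|G^n_j|\le 2\|\Phi'\|_\infty C_3$ and, by Lemma~\ref{lemm4.4}, $\sum_j|\Delta_-A(u^n_{j+1/2})|\le C_4$, one has $|\mathcal{C}|\le 2\|\Phi'\|_\infty C_3\,C_4\,T$. (Alternatively, keep Young but bound $\Delta x\sum_j(G^n_j)^2$ via $|G^n_j|\le\|\Phi'\|_\infty(u^n_{j-1/2}+u^n_{j+1/2})$ and the discrete mass conservation $\Delta x\sum_j u^n_{j+1/2}=C_0$.) For the convective part of $\mathcal{R}$, namely $a^*\tfrac{\Delta t^2}{\Delta x}\sum_{n,j}(\Delta_+G^n_j)^2$, use the same $L^1$ control of $G^n$ together with $\Delta t=\mathcal{O}(\Delta x^2)$; this is precisely how the paper closes the estimate (invoking Lemma~\ref{u_is_TV_stable} and the bound on $G^n_j$). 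With these substitutions your argument goes through and yields the same conclusion as the paper, under the same strengthened CFL condition~\eqref{CFL_eps}.
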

\begin{proof}
 Multiplying  \eqref{scheme100}, by $\smash{u^{n}_{j+1/2}\Delta x}$, summing the result over~$n=0,\dots, N-1$ and~$j\in\mathbb{Z}$, and using 
   summations by parts we get
\begin{align*}
& \lambda\sum_{n=0}^{N-1} \sum_{j\in\mathbb{Z}}\bigl(\Delta_- A\bigl(u^{n}_{j+1/2}\bigr) \bigr)\bigl(\Delta_- u^{n}_{j+1/2} \bigr)\\
& =\Delta t\sum_{n=0}^{N-1} \sum_{j\in\mathbb{Z}}G^n_{j}\bigl(\Delta_- u^{n}_{j+1/2} \bigr)
- \frac{\Delta x}{2}\sum_{n=0}^{N-1} \sum_{j\in\mathbb{Z}}\bigl(\bigl(u^{n+1}_{j+1/2}\bigr)^2-\bigl(u^{n}_{j+1/2}\bigr)^2\bigr) \\
& \quad + \frac{\Delta x}{2}\sum_{n=0}^{N-1} \sum_{j\in\mathbb{Z}}\bigl(\Delta_- u^{n+1}_{j+1/2} \bigr)^2, 
\end{align*}
where we used that 
\begin{align*} 
\bigl(u^{n+1}_{j+1/2}-u^{n}_{j+1/2} \bigr)u^{n}_{j+1/2}=\frac{1}{2}\Bigl[\bigl(u^{n+1}_{j+1/2}\bigr)^2-\bigl(u^{n}_{j+1/2}\bigr)^2-\bigl(u^{n+1}_{j+1/2}-u^{n}_{j+1/2}\bigr)^2\Bigr] .
\end{align*}  
In light of  Lemma~\ref{L_infinity_u}, we can also write
\begin{align*} 
\bigl(  \Delta_- A\bigl(u^{n}_{j+1/2}\bigr) \bigr) \bigl(\Delta_- 
 u^{n}_{j+1/2} \bigr)\geq \frac{1}{a^*} \bigl(\Delta_- A\bigl(u^{n}_{j+1/2}\bigr)\bigr)^2,
  \quad a^* := \max_{u} a(u), 
\end{align*} 
since $a(u)\geq 0$. Using this observation, we find that
\begin{align}\label{eq_ast} \begin{split} 
 \frac{\lambda}{a^*} \sum_{n=0}^{N-1}\sum_{j\in\mathbb{Z}}\bigl(\Delta_- A\bigl(u^{n}_{j+1/2}\bigr)\bigr)^2 
& \leq\Delta t\sum_{n=0}^{N-1}\sum_{j\in\mathbb{Z}}G^n_{j}\bigl(\Delta_- u^{n}_{j+1/2} \bigr)
+ \frac{\Delta x}{2}\sum_{j\in\mathbb{Z}}\bigl(u^{0}_{j+1/2}\bigr)^2  \\
&\quad+ \frac{\Delta x}{2}\sum_{n=0}^{N-1} \sum_{j\in\mathbb{Z}}\bigl(u^{n+1}_{j+1/2}-u^{n}_{j+1/2}\bigr)^2.
\end{split} 
\end{align}
On the other hand, from \eqref{scheme100} and the inequality $ (a+b)^2\leq 2a^2+2b^2$ we obtain
\begin{align*}
 \frac{1}{2}\bigl(u^{n+1}_{j+1/2}-u^{n}_{j+1/2}\bigr)^2
  & \leq \lambda^2\bigl(\Delta_+ G^n_j\bigr)^2 +2\mu^2\Bigl( \bigl( \Delta_+ A \bigl(u^{n}_{j+1/2}\bigr)
   \bigr)^2+\bigl( \Delta_- A\bigl(u^{n}_{j+1/2}\bigr) \bigr)^2\Bigr).
\end{align*}
Multiplying the last inequality by $\Delta x$ and summing the result over~$n$ and~$j$  yields 
\begin{align*}
 \frac{\Delta x}{2}\sum_{n=0}^{N-1}\sum_{j\in\mathbb{Z}}\bigl(u^{n+1}_{j+1/2}-u^{n}_{j+1/2}\bigr)^2&\leq \frac{\Delta t^2}{\Delta x}\sum_{n=0}^{N-1} \sum_{j\in\mathbb{Z}}\bigl(\Delta_+ G^n_j\bigr)^2
 \\ &\quad 
 +4\mu^2\Delta x\sum_{n=0}^{N-1}\sum_{j\in\mathbb{Z}}\bigl( \Delta_- A\bigl(u^{n}_{j+1/2}\bigr) \bigr)^2.
\end{align*}
In what follows, we assume that the following strengthened CFL condition is satisfied for a 
constant $\eps >0$:  
\begin{align}
\textrm{CFL}_\eps:= 2\lambda  \max_{u \in \mathbb{R}}       \bigl| \Phi'(u) \bigr|  
 +  4\mu \max_{u \in \mathbb{R}} a(u) \leq 1-\eps.\label{CFL_eps} 
\end{align}
The new CFL condition implies in particular that
\begin{align*} 
 4\mu^2\Delta x=4\mu \frac{\Delta t}{\Delta x}\leq \frac{\Delta t(1-\eps)}{\Delta x \, a^*}, 
 \end{align*} 
and therefore
\begin{align} \label{eq_cat} \begin{split} 
 &\frac{\Delta x}{2}\sum_{n=0}^{N-1}\sum_{j\in\mathbb{Z}}\bigl(u^{n+1}_{j+1/2}-u^{n}_{j+1/2}\bigr)^2\\
 & \leq \frac{\Delta t^2}{\Delta x}\sum_{n=0}^{N-1} \sum_{j\in\mathbb{Z}}\bigl(\Delta_+ 
 G^n_j\bigr)^2   +\frac{\Delta t(1-\eps)}{\Delta x \, a^*}\sum_{n=0}^{N-1}\sum_{j\in\mathbb{Z}}\bigl( \Delta_- A\bigl(u^{n}_{j+1/2}\bigr) \bigr)^2.
\end{split} 
\end{align}
Summing \eqref{eq_ast} and \eqref{eq_cat} yields 
\begin{align*}
& \frac{\eps\lambda}{a^*} \sum_{n=0}^{N-1} \sum_{j\in\mathbb{Z}}\bigl(\Delta_- A \bigl(u^{n}_{j+1/2}\bigr)\bigr)^2  \\
&\leq\Delta t\sum_{n=0}^{N-1} \sum_{j\in\mathbb{Z}}G^n_{j}\bigl(  \Delta_- u^{n}_{j+1/2} \bigr)
+ \frac{\Delta x}{2}\sum_{j\in\mathbb{Z}}\bigl(u^{0}_{j+1/2}\bigr)^2
 +\frac{\Delta t^2}{\Delta x}\sum_{n=0}^{N-1} \sum_{j\in\mathbb{Z}}\bigl(\Delta_- 
G^n_{j+1}\bigr)^2  \leq C,
\end{align*}
where we used Lemma \ref{u_is_TV_stable}, the bound over $G^n_j$ and the fact that $\Delta t=\mathcal{O}(\Delta x^2)$. 
\end{proof}

     With the help of Lemma~\ref{L^2_A_x}  we can prove 
     \begin{lemma} \label{lemma4.10} Under the assumptions of Lemma~\ref{L^2_A_x}
      there exists a constant $C_{10}$ which is independent of~$\Delta$ 
       such that 
        \begin{align} 
          \sum_{j\in\mathbb{Z}}\bigl| A \bigl(u_{j+1/2}^m \bigr) -  A \bigl(u_{j+1/2}^n \bigr) \bigr|^2\Delta x 
           \leq C_{10}  (m-n) \Delta t \quad \text{\em  for $m>n$.}  
        \end{align} 
     \end{lemma}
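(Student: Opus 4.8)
The plan is to reduce the statement, via the Lipschitz character of $A$, to a single bilinear time increment, and then to extract that increment by telescoping the scheme and invoking the space–time gradient bound of Lemma~\ref{L^2_A_x}. The first step is to record that, by Lemma~\ref{L_infinity_u}, all the values $u_{j+1/2}^n$ lie in a fixed compact interval on which $a=A'$ is bounded by $a^\ast:=\max_u a(u)$; consequently $A$ is Lipschitz there and the elementary inequality $(A(b)-A(c))^2\le a^\ast(b-c)\bigl(A(b)-A(c)\bigr)$ holds for all admissible $b,c$. Choosing $b=u_{j+1/2}^m$, $c=u_{j+1/2}^n$, multiplying by $\Dx$ and summing over $j$ reduces the claim to proving
\begin{align*}
 S:=\sum_{j\in\mathbb{Z}}\bigl(u_{j+1/2}^m-u_{j+1/2}^n\bigr)\bigl(A(u_{j+1/2}^m)-A(u_{j+1/2}^n)\bigr)\Dx\le C(m-n)\Dt .
\end{align*}
I abbreviate $e_j:=A(u_{j+1/2}^m)-A(u_{j+1/2}^n)$, so that $S=\Dx\sum_j(u_{j+1/2}^m-u_{j+1/2}^n)e_j$, and it is $S$ that carries all the work.

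For the second step I would telescope $u_{j+1/2}^m-u_{j+1/2}^n=\sum_{l=n}^{m-1}(u_{j+1/2}^{l+1}-u_{j+1/2}^l)$ and substitute the marching formula \eqref{scheme100}, which expresses each increment as $-\lambda\Delta_+G_j^l+\mu\Delta^2A(u_{j+1/2}^l)$ with $G_j^l$ as in \eqref{scheme101}. Discrete summation by parts then transfers the difference operators onto $e$, splitting $S$ into a convective part built from $G_j^l\,\Delta_-e_j$ and a diffusive part built from $\Delta_-A(u_{j+1/2}^l)\,\Delta_-e_j$; crucially $\Delta_-e_j$ does not depend on $l$, so the time summation acts only on the flux and gradient factors.

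The convective part is straightforward: the Engquist–Osher flux is uniformly bounded, $\abs{G_j^l}\le 2\norm{\Phi'}_\infty C_3$ by Lemma~\ref{L_infinity_u} (and Corollary~\ref{crol4.1}), while Lemma~\ref{lemm4.4} gives $\sum_j\abs{\Delta_-e_j}\le\TV(A(U^m))+\TV(A(U^n))\le 2C_4$. Hence the convective part is bounded by a constant times $(m-n)\Dt$, which is already the target order.

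The diffusive part is the real obstacle, and it is here that Lemma~\ref{L^2_A_x} must be used. I would estimate it by the Cauchy–Schwarz inequality, pairing the time–summed gradient $\sum_{l=n}^{m-1}\Delta_-A(u_{j+1/2}^l)$ against $\Delta_-e_j$: a Cauchy–Schwarz in the time index yields $\sum_j\bigl(\sum_{l=n}^{m-1}\Delta_-A(u_{j+1/2}^l)\bigr)^2\le(m-n)\sum_{l=n}^{m-1}\sum_j\bigl(\Delta_-A(u_{j+1/2}^l)\bigr)^2$, whose right–hand side is exactly the quantity controlled by Lemma~\ref{L^2_A_x}, while the $\Delta_-e$ factor is handled through $\sum_j\abs{\Delta_-e_j}\le 2C_4$ and the $L^\infty$ bound on $A(u_{j+1/2}^n)$ implied by Lemma~\ref{L_infinity_u}. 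The whole difficulty is then to balance the resulting prefactor $\Dt/\Dx$ against these two estimates so that the final bound comes out at order $(m-n)\Dt$ and not at a larger power; I expect this balancing — which should lean on the strengthened CFL condition \eqref{CFL_eps} and the parabolic scaling $\Dt=\order{\Dx^2}$, precisely as in the proof of Lemma~\ref{L^2_A_x} — to be the main technical hurdle, the remaining manipulations being the routine summations by parts and the already–established uniform estimates.
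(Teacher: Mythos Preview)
Your proposal follows essentially the same route as the paper: the Lipschitz reduction $(A(b)-A(c))^2\le a^\ast(b-c)(A(b)-A(c))$, the telescoping via \eqref{scheme100}, the summation by parts onto $\Delta_-e_j$, and the convective/diffusive split are all identical. Your convective estimate (bounded $G_j^l$ together with $\sum_j\abs{\Delta_-e_j}\le 2C_4$ from Lemma~\ref{lemm4.4}) is in fact a bit more direct than the paper's, which detours through the $L^2$ gradient bound of Lemma~\ref{L^2_A_x} even for $\mathcal{A}$.

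The one substantive difference is in the diffusive part. You anticipate a Cauchy--Schwarz in the time index and then a delicate balancing of the $\Delta t/\Delta x$ prefactor against the CFL condition \eqref{CFL_eps} and the parabolic scaling. The paper avoids this entirely: it simply expands $\Delta_-e_j=\Delta_-A(u^m_{j+1/2})-\Delta_-A(u^n_{j+1/2})$, distributes over $\sum_{l=n}^{m-1}\Delta_-A(u^l_{j+1/2})$, and applies Young's inequality $ab\le\tfrac12(a^2+b^2)$ to each of the resulting cross--products $\Delta_-A(u^p)\cdot\Delta_-A(u^l)$. The squares that appear are then controlled by Lemma~\ref{L^2_A_x}; no CFL or $\Delta t=\mathcal{O}(\Delta x^2)$ balancing enters at this stage. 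So the ``main technical hurdle'' you flag is resolved by an elementary algebraic device rather than by a scaling argument, and your expectation that \eqref{CFL_eps} is needed here is misplaced.
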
 
\begin{proof}
 Using Lemma \ref{L_infinity_u}, the fact that $A'(u)\geq 0$ and \eqref{scheme100} we get
\begin{align} \label{deffab} \begin{split} 
 &\sum_{j\in\mathbb{Z}}\bigl( A \bigl(u^{m}_{j+1/2}\bigr)-A\bigl(u^{n}_{j+1/2}\bigr)\bigr)^2\Delta x\\
&\leq a^* \sum_{j\in\mathbb{Z}}\bigl( A \bigl(u^{m}_{j+1/2}\bigr)-A \bigl(u^{n}_{j+1/2}\bigr)\bigr)\bigl( u^{m}_{j+1/2}-u^{n}_{j+1/2}\bigr) \Delta x=: \mathcal{A} + \mathcal{B}, 
\end{split}\end{align}   
where we define 
\begin{align*} 
\mathcal{A} & :=-\Delta t \, a^* \sum_{j\in\mathbb{Z}}\bigl( A \bigl(u^{m}_{j+1/2}\bigr)-
A\bigl(u^{n}_{j+1/2}\bigr)\bigr) \sum^{m-1}_{l=n} \Delta_+G^l_j,\\
\mathcal{B} & := \lambda\, a^* \sum_{j\in\mathbb{Z}}
\bigl( A \bigl(u^{m}_{j+1/2}\bigr)-A \bigl(u^{n}_{j+1/2}\bigr)\bigr) 
\sum^{m-1}_{l=n} \Delta^2A \bigl(u^{l}_{j+1/2}\bigr).  
\end{align*}
Summing by parts, using the Cauchy-Schwarz inequality and Lemma \ref{L^2_A_x}  we obtain
\begin{align*}
 \mathcal{A}&= \Delta t \, a^* \sum_{j\in\mathbb{Z}}\sum^{m-1}_{l=n}G^l_j\Delta_+
 \bigl( A \bigl(u^{m}_{j+1/2}\bigr)-A \bigl(u^{n}_{j+1/2}\bigr)\bigr) \\
&= \Delta t\, a^* \sum_{j\in\mathbb{Z}}\sum^{m-1}_{l=n}G^l_j
\bigl( \Delta_- A \bigl(u^{m}_{j+1/2}\bigr) -\Delta_- A \bigl(u^{n}_{j+1/2}\bigr) \bigr)\\
&\leq \frac{\Delta t}{2} a^* \sum_{j\in\mathbb{Z}}\sum^{m-1}_{l=n}\left|G^l_j\right|\left[\left(\frac{\Delta_- A(u^{m}_{j+1/2})}{\Delta x}\right)^2+ \left(\frac{\Delta_- A(u^{n}_{j+1/2})}{\Delta x}\right)^2\right]\Delta x\\
&\quad + \frac{\Delta t}{2} a^* \sum_{j\in\mathbb{Z}}\sum^{m-1}_{l=n}\left|G^l_j\right|\Delta x
 = \mathcal{O}\bigl((m-n)\Delta t\bigr).
\end{align*}
Proceeding in the same way for $\mathcal{B}$ yields  
\begin{align*}
 \mathcal{B}&= -\lambda a^* \sum_{j\in\mathbb{Z}} \biggl\{ \bigl[ A \bigl(u^{m}_{j+1/2}\bigr)-
 A\bigl(u^{n}_{j+1/2}\bigr)-\bigl(A \bigl(u^{m}_{j-1/2}\bigr)-A\bigl(u^{n}_{j-1/2}\bigr)\bigr)\bigr]\\
&\quad \times\sum^{m-1}_{l=n}\Delta_- A \bigl(u^{l}_{j+1/2}\bigr)\biggr\}  \\ 
&=-\lambda a^*\sum_{j\in\mathbb{Z}}\biggl\{ \bigl( \Delta_- A \bigl(u^{m}_{j+1/2}\bigr)
-\Delta_- A \bigl(u^{n}_{j+1/2}\bigr) \bigr) 
 \sum^{m-1}_{l=n}\Delta_- A \bigl(u^{l}_{j+1/2}\bigr) \biggr\} \\
&=-\lambda a^* \sum_{j\in\mathbb{Z}}\sum^{m-1}_{l=n}\left(\Delta_- A\bigl(u^{m}_{j+1/2}\bigr)
 \cdot \Delta_- A\bigl(u^{l}_{j+1/2}\bigr) 
-\Delta_- A\bigl(u^{n}_{j+1/2}\bigr) \cdot \Delta_- A\bigl(u^{l}_{j+1/2}\bigr)
\right)  \\
&\leq 2(m-n)\Delta t \, a^* \sum_{j\in\mathbb{Z}}\left(\frac{\Delta_- A(u^{n}_{j+1/2})}{\Delta x}\right)^2\Delta x =\mathcal{O}\bigl((m-n)\Delta t\bigr).
\end{align*}
Inserting into \eqref{deffab} that $\mathcal{A}, \mathcal{B} =\mathcal{O}((m-n)\Delta t)$ 
concludes the proof. 
\end{proof}

     Let us now denote by $u^{\Delta}$ 
      the  piecewise constant function 
       \begin{align} \label{pwcf}  
        u^{\Delta} (x, t) := \sum_{n=0}^{N-1} \sum_{j \in \mathbb{Z}} 
         \chi_{jn} (x,t) u_{j+1/2}^n, 
         \end{align} 
          where $\chi_{jn}$ denotes the characteristic function 
           of $I_j \times [t_n, t_{n+1})$,  
        and let us denote by $\smash{v^{\Delta}}$ its primitive. 
       From the $L^{\infty}$ bound 
         (Lemma~\ref{L_infinity_u}), the uniform bound on the total variation in space 
          (Lemma~\ref{u_is_TV_stable}) and the  $L^1$ H\"{o}lder continuity  in time 
          result (Lemma~\ref{L1_holder_continuity}) we infer that there is  a constant $C$ 
          such that 
           \begin{align}\label{compactness_u^Delta}
            \| u^{\Delta} \|_{L^{\infty} (\Pi_T)} + \|u^{\Delta}\|_{L^1(\Pi_T)} 
             \leq C;  \quad \bigl| u^{\Delta} (\cdot, t) \bigr|_{BV(\mathbb{R})} \leq 
              C \quad \text{for all $t \in (0, T]$}  
              \end{align} 
              uniformly as $\Delta x, \Delta t \downarrow 0$, 
              while Lemmas~\ref{L^2_A_x} and~\ref{lemma4.10} imply that there are 
                constants $C_{11}$ and $C_{12}$ independent of $\Delta$ such that 
	\begin{align}\label{compactness_A}\begin{split}
	 &\left\|A(u^\Delta(\cdot+y,\cdot))-A(u^\Delta(\cdot,\cdot))\right\|_{L^{2}(\Pi_{T})}\leq C_{11}(\vert y\vert +\Delta x),\\
	&\left\|A(u^\Delta(\cdot,\cdot+\tau))-A(u^\Delta(\cdot,\cdot))\right\|_{L^{2}(\Pi_{T})}\leq C_{12} \sqrt{ \tau +\Delta t}. 
	\end{split}
	\end{align}

\subsection{Convergence to the entropy solution} \label{subsec:convent} 

\begin{theorem} \label{th4.1} Assume that $\Delta x$ and $\Delta t$ satisfy the CFL condition 
\eqref{CFL_eps}, and that $u_0$ is compactly supported and satisfies 
 \eqref{initial_data_stability}.  Then the piecewise constant  solutions $\smash{u^{\Delta}}$ 
  generated by 
  the $u$-scheme  \eqref{scheme100}, \eqref{scheme101} converge in the strong topology of 
   $L^1 (\Pi_T)$ to an entropy solution of  \eqref{goveq}, \eqref{initcond}
    (in the sense of Definition~\ref{def:ent}). 
    \end{theorem}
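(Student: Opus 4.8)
The plan is to extract a convergent subsequence from the a priori bounds of Section~\ref{subsec:v_estimates} and then to identify its limit as an entropy solution by a Lax--Wendroff argument driven by a discrete cell entropy inequality for the $u$-scheme. First I would use the uniform bounds collected in \eqref{compactness_u^Delta}: the $L^\infty$ bound (Lemma~\ref{L_infinity_u}), the uniform-in-time spatial total-variation bound (Lemma~\ref{u_is_TV_stable}), and the $L^1$ H\"older continuity in time (Lemma~\ref{L1_holder_continuity}). A standard compactness criterion (as in \cite{ek00,kr01}) then yields a subsequence, still denoted $u^\Delta$, with $u^\Delta\to u$ in $L^1(\Pi_T)$ and pointwise a.e. Non-negativity of $u$ is inherited from the $v$-scheme, since monotonicity preservation (Corollary~\ref{crol4.1}) gives $u_{j+1/2}^n=\Delta_+v_j^n/\Dx\ge0$. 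The bounds pass to the limit: $u\in L^\infty(\Pi_T)\cap L^1(\Pi_T)$, $u(\cdot,t)\in BV(\mathbb{R})$ uniformly in~$t$ by lower semicontinuity of the total variation, and $u\in C(0,T;L^1(\mathbb{R}))$ by Lemma~\ref{L1_holder_continuity}. Because $A$ is Lipschitz, $A(u^\Delta)\to A(u)$ a.e.\ and in $L^2(\Pi_T)$, and letting $\Delta\to0$ in the translation estimates \eqref{compactness_A} characterizes $A(u)\in L^2(0,T;H^1(\mathbb{R}))$; this establishes item~(1) of Definition~\ref{def:ent}. The initial condition \eqref{esslimcond} (item~(2)) then follows by combining Lemma~\ref{L1_holder_continuity} with the consistency of the initial discretization.

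The heart of the proof, and the step I expect to be the main obstacle, is a discrete cell entropy inequality for the $u$-scheme that is consistent with \eqref{entrineq}. Two features make this delicate. First, the $u$-scheme \eqref{scheme100}, \eqref{scheme101} is not in standard conservation form with a monotone two-point flux: its flux $G_j^n$ is assembled from the primitive values $v_j^n$, so the zero-order term $\sgn(u-k)\,uk\,\Phi''(v)$ of \eqref{entrineq} must be recovered from a Taylor expansion of $G_j^n$, exactly as in the expansion already carried out in the proof of Lemma~\ref{u_is_TV_stable}. I would exploit that, away from the critical value~$v^*$, the Engquist--Osher flux in \eqref{scheme101} collapses to a pure upwind flux $\Phi'(\xi)\,u_{j\mp1/2}^n$, so that the $u$-scheme is an upwind discretization of $u_t+(\Phi'(v)u)_x=A(u)_{xx}$. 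Second, the flux changes character across~$v^*$; here I would invoke Lemma~\ref{position_of_v^*} to confine this switch to the finitely many cells of the set $\mathcal{B}$ in \eqref{usefulsets} and to absorb the resulting mismatch into an $\mathcal{O}(\Dt)$ remainder. The target inequality has the form
\begin{equation*}
\begin{split}
\abs{u_{j+1/2}^{n+1}-k}-\abs{u_{j+1/2}^{n}-k}
&+\lambda\Delta_+\mathcal{H}_{j-1/2}^{n}
+\lambda\,\sgn\bigl(u_{j+1/2}^{n+1}-k\bigr)\,k\,u_{j+1/2}^{n}\,\Phi''\bigl(v_j^n\bigr)\Dx\\
&-\mu\,\Delta^2\abs{A\bigl(u_{j+1/2}^{n}\bigr)-A(k)}\;\le\;\mathcal{O}(\Dt)\,R_j^n,
\end{split}
\end{equation*}
with a consistent numerical entropy flux $\mathcal{H}_{j-1/2}^n$ and a remainder $R_j^n$ that is uniformly summable against the discrete total variation.

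With this inequality in hand, the remaining step is a routine Lax--Wendroff passage to the limit. I would multiply by $\Dx\,\test_j^n\ge0$, where $\test_j^n:=\test(x_j,t_n)$ for an arbitrary non-negative $\test\in C_0^\infty(\Pi_T)$, sum over $j$ and $n$, and use summation by parts to move the differences onto~$\test$. The convective and zero-order terms converge by dominated convergence, using the a.e.\ convergence $u^\Delta\to u$, the smoothness of~$\Phi$, and the convergence of the primitives $v^\Delta\to v$ with $v$ given by \eqref{vdef}; the diffusion term converges because $A(u^\Delta)\to A(u)$ strongly in $L^2(\Pi_T)$; and the remainder vanishes since it carries an extra factor $\Dt$ against the uniformly bounded total variation of Lemma~\ref{u_is_TV_stable}. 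The limit therefore satisfies \eqref{entrineq}, so $u$ is an entropy solution in the sense of Definition~\ref{def:ent}. Finally, the uniqueness result of Section~\ref{subsec:uniqueness} forces every subsequence to share the same limit, whence the full family $u^\Delta$ converges in $L^1(\Pi_T)$, as claimed.
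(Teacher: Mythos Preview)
Your proposal is correct and follows the same overall strategy as the paper: compactness from the a~priori estimates, a discrete cell entropy inequality for the $u$-scheme, and a Lax--Wendroff passage to the limit using the decomposition $\mathcal{A}\cup\mathcal{B}\cup\mathcal{C}$ around the critical value~$v^*$ together with Lemma~\ref{position_of_v^*}. The one organizational difference worth noting is where the Taylor expansion enters. You propose to build the $\Phi''$-term directly into the cell entropy inequality, carrying an $\mathcal{O}(\Dt)\,R_j^n$ remainder that must then be shown summable. The paper instead observes that the update map $\mathcal{G}_{j+1/2}^n(u,v,w)$ is monotone in each argument \emph{globally} (not only away from~$v^*$), so the Crandall--Majda trick yields an \emph{exact} cell entropy inequality \eqref{scheme104} with the raw increments $\Delta_+\Phi_+'(\alpha_{j-1/2}^n)+\Delta_+\Phi_-'(\beta_{j+1/2}^n)$ in place of the $\Phi''$-term; the Taylor expansion and the $\mathcal{A}/\mathcal{B}/\mathcal{C}$ splitting are postponed to the Lax--Wendroff step, where the test function absorbs the~$\mathcal{O}(\Dx)$ errors. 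This buys a cleaner separation between the algebraic entropy inequality and the analytic limit passage, but the content is the same as what you outline.
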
 

\begin{proof}  
%
%
%
Since $u^{\Delta} \in L^{\infty} (\Pi_T)  \cap 
 L^{\infty}(0, T; BV (\mathbb{R})) \cap C^{1/2}(0,T; L^1 (\mathbb{R}))$, 
  we deduce from \eqref{compactness_u^Delta} 
  that there exists a sequence $\smash{\{\Delta_i \}_{i \in \mathbb{N}}}$ 
with $\Delta_i \downarrow 0$ for $i \to \infty$ and a function 
 $u \in   L^{\infty}(\Pi_T) \cap L^1(\Pi_T) \cap L^{\infty} (0, T; BV(\mathbb{R}))$ such that 
  $\smash{u^{\Delta} \to u}$ a.e. on $\Pi_T$. Moreover, in light of 
   \eqref{compactness_A} we have 
    $\smash{A(u^{\Delta} ) \to A(u)}$ strongly on $L^2_{\textrm{loc}}(\Pi_T)$, and 
    we have that $A(u) \in L^2(0,T;H^1(\mathbb{R}))$. Lemma~\ref{L1_holder_continuity}
     ensures that~$u$ satisfies the initial condition \eqref{esslimcond}.   
    It remains 
   to prove that~$u$ satisfies the entropy inequality \eqref{entrineq}. 
    To this end, we show that the $u$-scheme satisfies a discrete entropy inequality, 
     and then apply a standard Lax-Wendroff-type argument.  
   From \eqref{scheme101} we infer that 
\begin{align*} 
 \Delta_+ G_j^n = \Delta_+ 
  \bigl[ u_{j-1/2}^n \Phi_+' \bigl( \alpha_{j-1/2}^n \bigr) 
   + u_{j+1/2}^n \Phi_-' \bigl( \beta_{j+1/2}^n \bigr) \bigr], 
   \end{align*} 
where $\smash{ \alpha_{j-1/2}^n, \beta_{j-1/2}^n \in [v_{j-1}^n, v_j^n] }$ 
satisfy 
\begin{align} 
 \Phi_+' \bigl( \alpha_{j-1/2}^n \bigr) &= \theta \bigl( 
  \Delta_+ v_{j-1}^n \bigr) 
   \int_{v_{j-1}^n}^{v_j^n} 
    \Phi_+' (s) \, \mathrm{d}s,  \label{scheme105} \\
 \Phi_-' \bigl( \beta_{j-1/2}^n \bigr) & = \theta \bigl( 
  \Delta_+ v_{j-1}^n \bigr) 
   \int_{v_{j-1}^n}^{v_j^n} 
    \Phi_-' (s) \, \mathrm{d}s. \label{scheme106} 
    \end{align}      
    Consequently, defining the function 
\begin{align*} 
 \mathcal{G}_{j+1/2}^n 
  (u,v,w):= &u \lambda  \Phi_+' \bigl( \alpha_{j-1/2}^n \bigr)
   + v \bigl[ 1 - \lambda \bigl(  \Phi_+' \bigl( \alpha_{j+1/2}^n \bigr)
    -  \Phi_-' \bigl( \beta_{j+1/2}^n \bigr) \bigr) \bigr] 
     \\& + w \bigl( - \lambda  \Phi_-' \bigl( \beta_{j+3/2}^n \bigr) \bigr)
      + \mu \bigl( A(u)- 2 A(v) + A(w) \bigr),
     \end{align*} 
     we can rewrite the scheme \eqref{scheme100} as 
 \begin{align*} 
  u_{j+1/2}^n =  \mathcal{G}_{j+1/2}^n \bigl( 
   u_{j-1/2}^n, u_{j+1/2}^n, u_{j+3/2}^n \bigr).  
 \end{align*}      
 Note that under the CFL condition, 
  $\smash{\mathcal{G}_{j+1/2}^n}$ is a monotone function of each of its 
  arguments for all $j \in \mathbb{Z}$ and $n =0,\dots, N-1$, and that
  \begin{align*} \forall k \in \mathbb{R}: \quad 
    \mathcal{G}_{j+1/2}^n (k,k,k) =k - \lambda k 
    \bigl[ \Delta_+  \Phi_+' \bigl( \alpha_{j-1/2}^n \bigr) 
     +\Delta_+   \Phi_-' \bigl( \beta_{j+1/2}^n \bigr) \bigr].  
     \end{align*}
     The  quantity
        $\smash{\bar{u}_{j+1/2}^{n+1} := u_{j+1/2}^n - \lambda
       \Delta_+ G_j^n +\mu \Delta^2 A (u_{j+1/2}^n)}$ 
       satisfies for all $k \in \mathbb{R}$  
       \begin{align*} 
       \bar{u}_{j+1/2}^{n+1} -k &=  \mathcal{G}_{j+1/2}^n \bigl( 
   u_{j-1/2}^n, u_{j+1/2}^n, u_{j+3/2}^n \bigr)- 
         	  \mathcal{G}_{j+1/2}^n (k,k,k)\\&\quad   - \lambda k  \bigl[ \Delta_+  \Phi_+' \bigl( \alpha_{j-1/2}^n \bigr) 
     +\Delta_+   \Phi_-' \bigl( \beta_{j+1/2}^n \bigr) \bigr],
     \end{align*} 
     and since $\smash{\mathcal{G}_{j+1/2}^n}$ is a monotone function of each of its 
  arguments, we get 
  \begin{align*} 
  & \bigl| \bar{u}_{j+1/2}^{n+1} -k + 
  \lambda k  \bigl[ \Delta_+  \Phi_+' \bigl( \alpha_{j-1/2}^n \bigr) 
     +\Delta_+   \Phi_-' \bigl( \beta_{j+1/2}^n \bigr) \bigr] \bigr| \\
     & = \bigl| \mathcal{G}_{j+1/2}^n \bigl( 
   u_{j-1/2}^n, u_{j+1/2}^n, u_{j+3/2}^n \bigr)- 
         	  \mathcal{G}_{j+1/2}^n (k,k,k) \bigr| \\
	  & = \mathcal{G}_{j+1/2}^n \bigl( 
   u_{j-1/2}^n, u_{j+1/2}^n, u_{j+3/2}^n \bigr) \vee  
         	  \mathcal{G}_{j+1/2}^n (k,k,k) \\& \quad - 
	  \mathcal{G}_{j+1/2}^n \bigl( 
   u_{j-1/2}^n, u_{j+1/2}^n, u_{j+3/2}^n \bigr) \wedge  
         	  \mathcal{G}_{j+1/2}^n (k,k,k) \\
	  & \leq  \mathcal{G}_{j+1/2}^n \bigl( 
   u_{j-1/2}^n\vee k, u_{j+1/2}^n \vee k, u_{j+3/2}^n \vee k \bigr) 
   	 \\& \quad  - 
	  \mathcal{G}_{j+1/2}^n \bigl( 
   u_{j-1/2}^n \wedge k, u_{j+1/2}^n \wedge k, u_{j+3/2}^n \wedge k \bigr). 
     \end{align*}
Thus, defining 
\begin{align*} 
 \tilde{G}_j^n (r,s,k) & := |r-k|   \Phi_+' \bigl( \alpha_{j-1/2}^n \bigr)  + 
 |s-k |  \Phi_-' \bigl( \beta_{j+1/2}^n  \bigr)  \\
 & \quad -  \frac{1}{\Delta x } \bigl(|A(s)-A(k)|-|A(r)-A(k)|\bigr), 
 \end{align*}  
 we can  write 
 \begin{align} \label{scheme102} 
 \begin{split} 
& \bigl| \bar{u}_{j+1/2}^{n+1} -k + 
  \lambda k  \bigl[ \Delta_+  \Phi_+' \bigl( \alpha_{j-1/2}^n \bigr) 
     +\Delta_+   \Phi_-' \bigl( \beta_{j+1/2}^n \bigr) \bigr] \bigr| \\
 &     \leq \bigl|  u_{j+1/2}^n -k  \bigr|  - \lambda 
      \Delta_+ \tilde{G}_j^n \bigl(  u_{j-1/2}^n, u_{j+1/2}^n,k \bigr).
      \end{split} 
 \end{align}
 On the other hand, 
 \begin{align} \label{scheme103} 
 \begin{split} 
& \bigl| \bar{u}_{j+1/2}^{n+1} -k + 
  \lambda k  \bigl[ \Delta_+  \Phi_+' \bigl( \alpha_{j-1/2}^n \bigr) 
     +\Delta_+   \Phi_-' \bigl( \beta_{j+1/2}^n \bigr) \bigr] \bigr| \\
     & \geq \bigl| u_{j+1/2}^{n+1} -k \bigr| + \sgn \bigl( u_{j+1/2}^{n+1} -k\bigr) 
     \lambda k  \bigl[ \Delta_+  \Phi_+' \bigl( \alpha_{j-1/2}^n \bigr) 
     +\Delta_+   \Phi_-' \bigl( \beta_{j+1/2}^n \bigr) \bigr].
     \end{split} 
     \end{align} 
Combining \eqref{scheme102} and \eqref{scheme103}, we arrive at the 
 ``cell entropy inequality''     
\begin{align} \label{scheme104} 
\begin{split} 
& \bigl| u_{j+1/2}^{n+1}-k \bigr| 
 -\bigl| u_{j+1/2}^{n}-k \bigr|  + \lambda \Delta_+ 
     \tilde{G}_j^n \bigl(  u_{j-1/2}^n, u_{j+1/2}^n,k \bigr) \\ &  
+ \sgn \bigl( u_{j+1/2}^{n+1} -k\bigr) 
     \lambda k  \bigl[ \Delta_+  \Phi_+' \bigl( \alpha_{j-1/2}^n \bigr) 
     +\Delta_+   \Phi_-' \bigl( \beta_{j+1/2}^n \bigr) \bigr]  \leq 0. 
\end{split} 
\end{align}  
We now basically establish convergence to a solution that 
 satisfies \eqref{entrineq} by a Lax-Wendroff-type argument. 
 Now, multplying the $j$-th inequality in \eqref{scheme104} by 
  $\smash{ \int_{I_j} \varphi(x,t_n) \, \mathrm{d} x}$, where  
$I_j := [x_j, x_{j+1}]$ and $\varphi$ is a suitable smooth, non-negative 
test function, and summing the results over $j \in \mathbb{Z}$, we obtain an 
 inequality of the type $E_1+ E_2 + E_3 \leq 0$, where we define 
 \begin{align*} 
 E_1 & := \sum_{n=0}^{N-1} 
  \sum_{j \in \mathbb{Z}} 
   \bigl(  \bigl| u_{j+1/2}^{n+1}-k \bigr| 
 -\bigl| u_{j+1/2}^{n}-k \bigr| \bigr) \int_{I_j} 
 \varphi(x,t_n) \, \mathrm{d} x, \\
 E_2 & := \lambda k \sum_{n=0}^{N-1} 
  \sum_{j \in \mathbb{Z}} \sgn \bigl( u_{j+1/2}^{n+1} -k\bigr) 
      \Delta_+ \bigl(  \Phi_+' \bigl( \alpha_{j-1/2}^n \bigr) 
     +  \Phi_-' \bigl( \beta_{j+1/2}^n \bigr) \bigr) \int_{I_j} 
 \varphi(x,t_n) \, \mathrm{d} x, \\
 E_3 & := \lambda \sum_{n=0}^{N-1} 
  \sum_{j \in \mathbb{Z}}   \Delta_+ 
     \tilde{G}_j^n \bigl(  u_{j-1/2}^n, u_{j+1/2}^n,k \bigr)\int_{I_j} 
 \varphi(x,t_n) \, \mathrm{d} x.  
 \end{align*} 
 By a standard summation by parts and using that $\varphi$ has compact support, 
 we get 
  \begin{align*} 
 E_1 & = -\Delta t  \sum_{n=0}^{N-1} 
  \sum_{j \in \mathbb{Z}} 
     \bigl| u_{j+1/2}^{n+1}-k \bigr| 
  \int_{I_j} 
 \fracd{\varphi(x,t_{n+1})-\varphi(x,t_n)}{\Delta t} \, \mathrm{d} x 
 \end{align*} 
 and $\smash{E_3= E_3^1 + E_3^2}$, where 
 \begin{align*} 
 E_3^1
 &  :=   - \Delta t  \sum_{n=0}^{N-1} 
  \sum_{j \in \mathbb{Z}}  \Bigl( \bigl| u_{j-1/2}^n - k \bigr| \Phi_+' \bigl( 
   \alpha_{j-1/2}^n \bigr) + \bigl| u_{j+1/2}^n - k \bigr| \Phi_-' \bigl( 
   \alpha_{\beta+1/2}^n \bigr) \Bigr) \times \\
   & \qquad \qquad \times \int_{I_j} 
   \fracd{\varphi(x+\Delta x,t_n)-\varphi(x,t_n)}{\Delta x} \, \mathrm{d} x, \\
   E_3^2 & :=   -  \Delta t  \sum_{n=0}^{N-1} \sum_{j \in \mathbb{Z}} 
   \bigl| A\bigl( u_{j+1/2}^n\bigr) - A(k) \bigr| \times \\
   & \qquad \qquad \times  \int_{I_j} 
   \fracd{\varphi(x+\Delta x,t_n)-2\varphi(x,t_n)+\varphi(x-\Delta x, t_n,x)}{\Delta x^2} \, \mathrm{d} x.  
 \end{align*} 
Clearly, we have that 
\begin{align*} 
E_2= \Delta t \sum_{n=0}^{N-1}  E_{2,n}^1 +
 E_2^2,
\end{align*}  
  where 
\begin{align*} 
E_{2,n}^1 & :=   \frac{k}{\Delta x} 
  \sum_{j \in \mathbb{Z}} \sgn \bigl( u_{j+1/2}^n -k\bigr) 
       \bigl[ \Delta_+  \Phi_+' \bigl( \alpha_{j-1/2}^n \bigr) 
     +\Delta_+   \Phi_-' \bigl( \beta_{j+1/2}^n \bigr) \bigr] \int_{I_j} 
 \varphi(x,t_n) \, \mathrm{d} x, \\
E_2^2  & :=   \lambda \sum_{n=0}^{N-1} 
  \sum_{j \in \mathbb{Z}}  \bigl[  \sgn 
\bigl( u_{j+1/2}^{n+1} -k\bigr) -  \sgn 
\bigl( u_{j+1/2}^{n} -k\bigr)  \bigr]   \times \\& \qquad \times 
     k  \bigl[ \Delta_+  \Phi_+' \bigl( \alpha_{j-1/2}^n \bigr) 
     +\Delta_+   \Phi_-' \bigl( \beta_{j+1/2}^n \bigr) \bigr] \int_{I_j} 
 \varphi(x,t_n) \, \mathrm{d} x. 
\end{align*} 

In the remainder of the proof, we  will  appeal to 
  \eqref{scheme105} and \eqref{scheme106}, and assume that both 
   $\Phi_-$ and $\Phi_+$ are  smooth away from $v^*$.  Moreover, we know that 
    for each~$n$, the data $\smash{\{v_{j}^n\}_{j \in \mathbb{Z}} }$ 
     are monotone.
     Therefore we will utilize once again that for each fixed  
      $n \in \{ 0, \dots, N-1 \}$, there exists $k$ such that 
      $\smash{v^{n}_{k}<v^*\leq v^{n}_{k+1}}$ and using Lemma~\ref{position_of_v^*} 
      we know that $\smash{v^{n+1}_{\bar{k}}<v^*\leq v^{n+1}_{\bar{k}+1}}$ with 
      $\smash{\bar{k}\in\left\{k-1,k,k+1\right\}}$. 
       Thus, if $\mathcal{A}^n$, $\mathcal{B}^n$ and $\mathcal{C}^n$ are 
        the  sets defined in \eqref{usefulsets}, 
      we may rewrite $E^1_{2,n}$ as 
$$E^1_{2,n}=E^1_{2,\mathcal{A}^n}+E^1_{2,\mathcal{B}^n}+E^1_{2,\mathcal{C}^n},$$
where the subindex denotes the summation over~$j$ from  
 the sets $\mathcal{A}^n$, $\mathcal{B}^n$ and $\mathcal{C}^n$, respectively. 
         For $\smash{j \in \mathcal{A}^n}$ we note that
         \begin{align*} 
          \Delta_+ \Phi_+' \bigl( \alpha_{j-1/2}^n\bigr) +\Delta_+\Phi_{-}'
           \bigl(\beta^{n}_{j+1/2} \bigr)
          & =   \Delta_+ \Phi' \bigl(\alpha_{j-1/2}^n \bigr);   
           \end{align*}  
           using a Taylor expansion about $\smash{v^n_j}$ we can write 
         \begin{align*} 
          \Delta_+ \Phi_+' \bigl(\alpha_{j-1/2}^n \bigr) 
              = \frac{\Delta x}{2} \bigl( u_{j+1/2}^n 
            +  u_{j-1/2}^n \bigr)   \Phi'' \bigl( v_j^n 
            \bigr)  + \mathcal{O} (\Delta x^2).     
            	\end{align*}      
             Thus,
            we obtain 
            \begin{align*} 
             	E^1_{2,\mathcal{A}^n} = \sum_{j \in \mathcal{A}^n} 
	           \fracd{k}{2} \sgn  \bigl( u_{j+1/2}^n-k \bigr)  \bigl( u_{j+1/2}^n 
            +  u_{j-1/2}^n \bigr)   \Phi'' \bigl( v_j^n 
            \bigr)  \int_{I_j} 
 \varphi(x,t_n) \, \mathrm{d} x  + \mathcal{O} (\Delta x). 
	 \end{align*} 
	 Since $\TV (U^n)$ is uniformly bounded, this implies 
	 that 
	 \begin{align} \label{eq200}  
             	E^1_{2,\mathcal{A}^n} = k\sum_{j \in \mathcal{A}^n} 
	           \sgn  \bigl( u_{j+1/2}^n-k \bigr)   u_{j+1/2}^n 
               \Phi'' \bigl( v_j^n 
            \bigr)  \int_{I_j} 
 \varphi(x,t_n) \, \mathrm{d} x  + \mathcal{O} (\Delta x). 
	 \end{align} 
	Analogously, we obtain  
	 \begin{align} \label{eq201}  
             	E^1_{2,\mathcal{C}^n} = k\sum_{j \in \mathcal{C}^n} 
	           \sgn  \bigl( u_{j+1/2}^n-k\bigr)     u_{j+1/2}^n 
               \Phi'' \bigl( v_j^n 
            \bigr)  \int_{I_j} 
 \varphi(x,t_n) \, \mathrm{d} x  + \mathcal{O} (\Delta x). 
	 \end{align}
 Moreover, since $\Phi$ is smooth and  we are  inspecting the situation 
 near the extremum (i.e. $\Phi'(v^*)=0$), we can conclude using  Taylor expansions that
 \begin{align*}
 \Delta_+  \Phi_+' \bigl( \alpha_{j-1/2}^n \bigr)  
     +\Delta_+   \Phi_-' \bigl( \beta_{j+1/2}^n \bigr)     = \mathcal{O} 
     (\Delta x)\quad \text{for  $j\in \mathcal{B}^n$}.
\end{align*}
Since $\mathcal{B}^n$ is finite,   
 $\smash{E^1_{2, \mathcal{C}^n} = \mathcal{O} (\Delta x)}$. 
        Combining this with \eqref{eq200} and \eqref{eq201} we obtain  
       \begin{align*} 
             	E^1_{2,n} = k \sum_{j \in \mathbb{Z}} 
	           \sgn  \bigl( u_{j+1/2}^n-k \bigr)   u_{j+1/2}^n 
               \Phi'' \bigl( v_j^n 
            \bigr)  \int_{I_j} 
 \varphi(x,t_n) \, \mathrm{d} x  + \mathcal{O} (\Delta x). 
	 \end{align*} 
       On the other hand, a summation by parts yields that 
       \begin{align*} 
        E_2^2 & = - \lambda k 
        \sum_{n=0}^{N-1} \sum_{j \in \mathbb{Z}} 
          \sgn \bigl(u_{j+1/2}^{n+1}-k \bigr) 
          \biggl[  \Bigr\{ \Delta_+ \Phi_+' \bigl( \alpha_{j-1/2}^{n+1} \bigr) 
            + \Delta_- \Phi_+' \bigl( \beta_{j+1/2}^{n+1} \bigr) \\
            & \quad \qquad 
             - \Delta_+ \Phi_+' \bigl( \alpha_{j-1/2}^{n} \bigr) 
            - \Delta_- \Phi_+' \bigl( \beta_{j+1/2}^{n} \bigr) \Bigr\}  \int_{I_j} 
 \varphi(x,t_n) \, \mathrm{d} x \\
 & \quad \qquad 
 +            \bigl[  \Delta_+ \Phi_+' \bigl( \alpha_{j-1/2}^{n} \bigr) 
             +\Delta_+ \Phi_+' \bigl( \beta_{j+1/2}^{n} \bigr) \bigr] 
              \int_{I_j} \bigl( \varphi(x, t_{n+1})- \varphi(x, t_n) \bigr) \, 
               \mathrm{d}x\biggr]. 
  \end{align*} 
Using arguments similar to those of the discussion of $E_2^1$ and  Lemma~\ref{position_of_v^*},  we see that the 
 expression in curled brackets is $\mathcal{O} ( \Delta x^2)$, 
  and finally $\smash{   E_2^2 = \mathcal{O} (\Delta x)}$, so that 
  \begin{align*} 
             	E_2  = k \Delta t \sum_{n=0}^{N-1} \sum_{j \in \mathbb{Z}} 
	           \sgn  \bigl( u_{j+1/2}^n-k)   u_{j+1/2}^n 
               \Phi'' \bigl( v_j^n 
            \bigr)  \int_{I_j} 
 \varphi(x,t_n) \, \mathrm{d} x  + \mathcal{O} (\Delta x). 
	 \end{align*}
	 A treatment similar to that of $E_2^1$ yields 
	 \begin{align*} 
	   E_3^1
    =   - \Delta t  \sum_{n=0}^{N-1} 
  \sum_{j \in \mathbb{Z}}   \bigl| u_{j+1/2}^n - k \bigr| \Phi' \bigl( 
   v_{j}^n \bigr) 
     \int_{I_j} 
   \fracd{\varphi(x+\Delta x,t_n)-\varphi(x,t_n)}{\Delta x} \, \mathrm{d} x\,+\mathcal{O}(\Delta x).
   \end{align*} 
   Since $\varphi$ is smooth, we may state the  inequality 
   $\smash{-E_1  -E_2 -E_3^1 - E_3^2 \geq 0}$ as 
   \begin{align*} 
    \iint_{\Pi_T} & 
     \Bigl\{ |u^{\Delta}-k | \bigl( \varphi_t 
      + \Phi' (v^{\Delta}) \varphi_x \bigr) 
       - \sgn (u^{\Delta}-k) u^{\Delta} k \Phi'' (v^{\Delta}) \\ & + 
        \bigl| A(u^{\Delta})-A(k) \bigr| \varphi_{xx} \Bigr\} 
         \, \mathrm{d}x \,\mathrm{d} t \geq - C_{11}   \Delta x 
         \end{align*} 
%
with a constant $C_{11}$ that is independent of $\Delta$. Taking $\Delta \to 0$ we obtain that the 
   limit function~$u$ satisfies the entropy inequality~\eqref{entrineq} for almost all $k\in\mathbb{R}$. 
    To prove that   \eqref{entrineq} 
     is valid for  all $k\in\mathbb{R}$ we may proceed according to 
    Lemmas~4.3 and~4.4 of \cite{krtl1}.  
%
\end{proof}

\begin{remark} Theorem~\ref{th4.1} implies, of course, that an entropy solution exists.  
  An inspection of the proofs of Lemmas~\ref{LC_respect_t} and~\ref{L_infinity_u} 
   reveals that the $L^{\infty}$ bound for $u$ is actually independent of~$T$. 
    Thus, even though our analysis is limited to domains~$\Pi_T$ with a finite final time~$T$, 
     we can say that entropy solutions of \eqref{goveq}, \eqref{initcond} do not blow up 
     in any finite time.  
\end{remark}

\section{Numerical examples} \label{sec:num} The examples presented here illustrate 
 the qualitative behavior of entropy solutions of the 
 initial value problem  \eqref{goveq}, \eqref{initcond}  and  
   the convergence properties of the numerical scheme. For the first purpose, 
    we select a relatively fine discretization and present the  corresponding 
    numerical solutions 
     as three-dimensional succesions of profiles at selected times or contour plots that  should almost 
     be free of numerical artefacts, while the convergence properties of the 
     scheme are demonstrated by error histories in some examples.  For all  numerical examples we 
      specify $\Delta x$ and use $\mu = \Delta t/\Delta x^2=0.1$, i.e., 
       $\Delta t = 0.1 \Delta x^2$. 
       
\begin{figure}[t]
\begin{center} 
\includegraphics[width=0.9\textwidth]{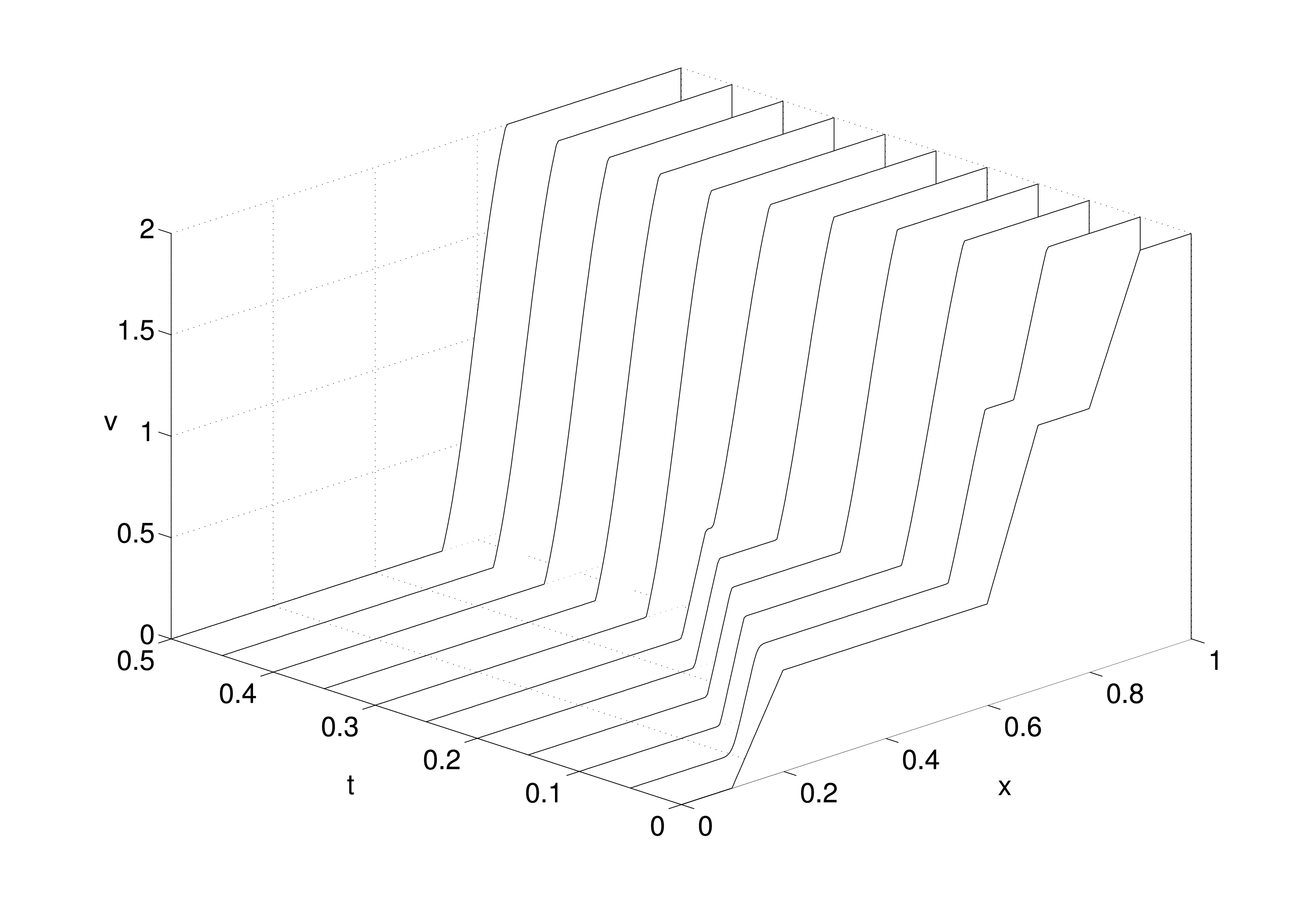} \\
\includegraphics[width=0.9\textwidth]{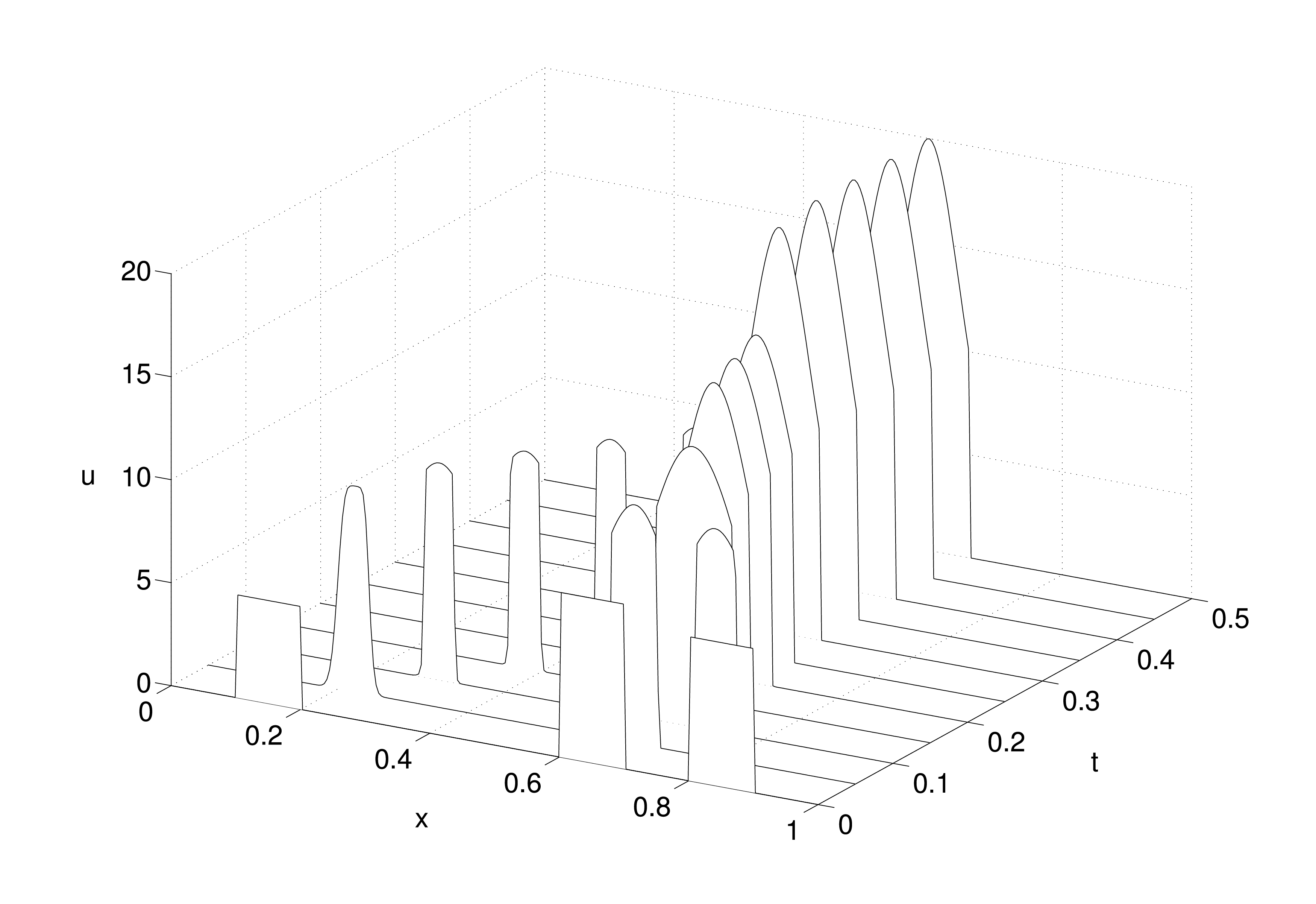}
\end{center} 
\caption{Example~1: Numerical approximation of~$v$ (top) and corresponding 
 approximation of~$u$ (bottom), obtained via \eqref{comb_scheme} with $\Delta x=0.001$.\label{Fig.1}}
\end{figure}

\begin{table}[t]
\begin{center}
\begin{tabular}{cccccccccc}
\hline
 $\Delta x$  & $e^{t_{1}}_v$& ${}^{\mathrm{conv.}}_{\mathrm{rate}}$&  $e^{t_{2}}_v$& ${}^{\mathrm{conv.}}_{\mathrm{rate}}$ & $e^{t_{1}}_u$ & ${}^{\mathrm{conv.}}_{\mathrm{rate}}$ & $e^{t_{2}}_u$ & 
 ${}^{\mathrm{conv.}}_{\mathrm{rate}}$ $\vphantom{\int_{X_x}^X}$ \\
\hline
0.020	 & 0.239 &   -   & 0.317 &   -   & 0.915 &   -   & 0.695 &   -\\
0.010    & 0.133 & 0.845 & 0.146 & 1.122 & 0.513 & 0.834 & 0.442 & 0.655\\
0.005    & 0.061 & 1.135 & 0.069 & 1.070 & 0.246 & 1.062 & 0.200 & 1.144\\
0.004    & 0.048 & 1.018 & 0.054 & 1.090 & 0.181 & 1.369 & 0.164 & 0.891\\
0.002    & 0.021 & 1.168 & 0.024 & 1.161 & 0.082 & 1.150 & 0.073 & 1.163\\
0.001	 & 0.008 & 1.360 & 0.009 & 1.399 & 0.036 & 1.167 & 0.032 & 1.200$\vphantom{\int_X}$\\
\hline
\end{tabular}

\vspace*{2mm} 

\end{center}
\caption{\label{table1}Example 1: Numerical error at $t_1=0.1$ and $t_2=0.25$.}
\end{table}

\begin{figure}[t] 
\begin{center} 
\includegraphics[width=0.95\textwidth]{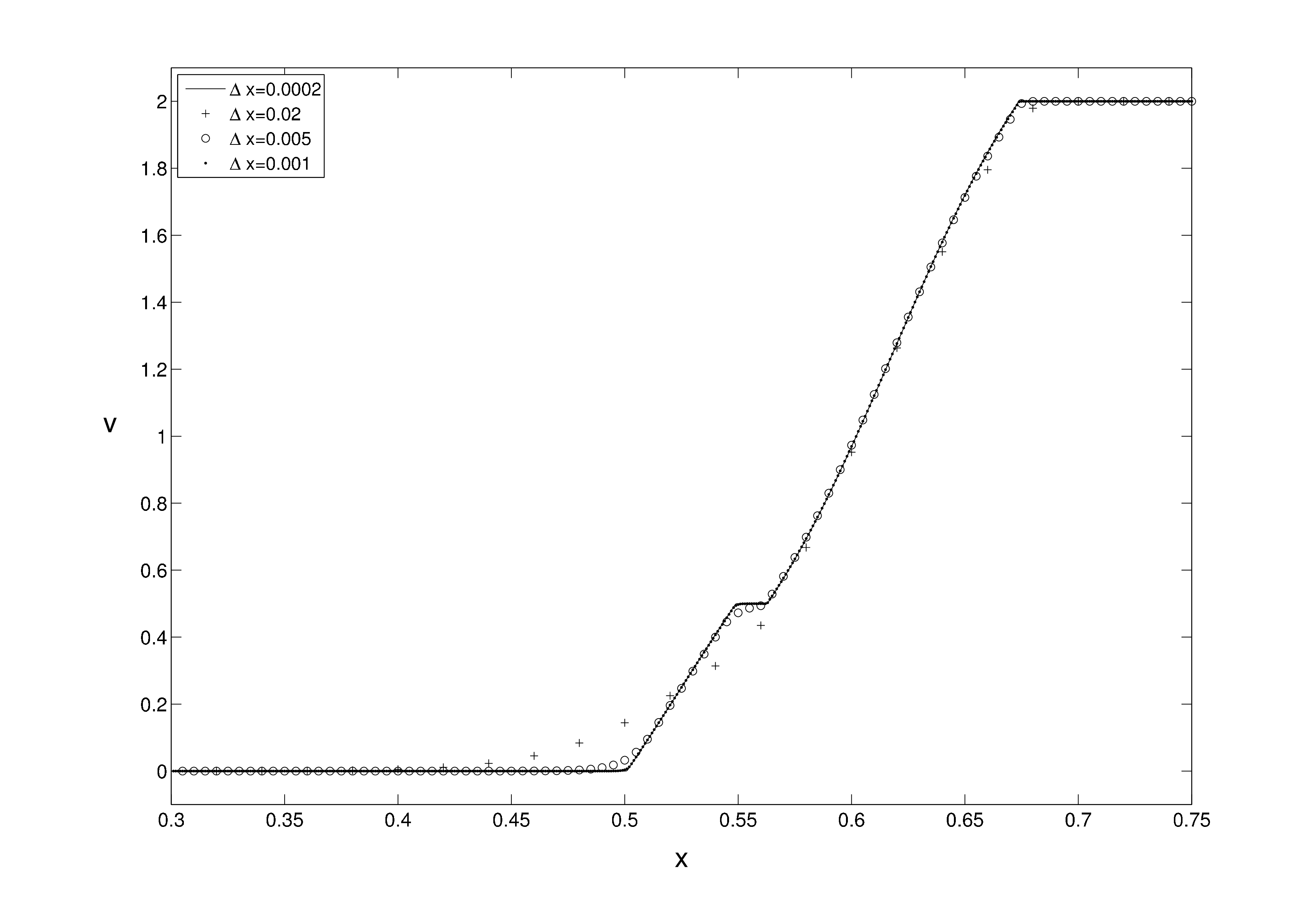} \\
\includegraphics[width=0.95\textwidth]{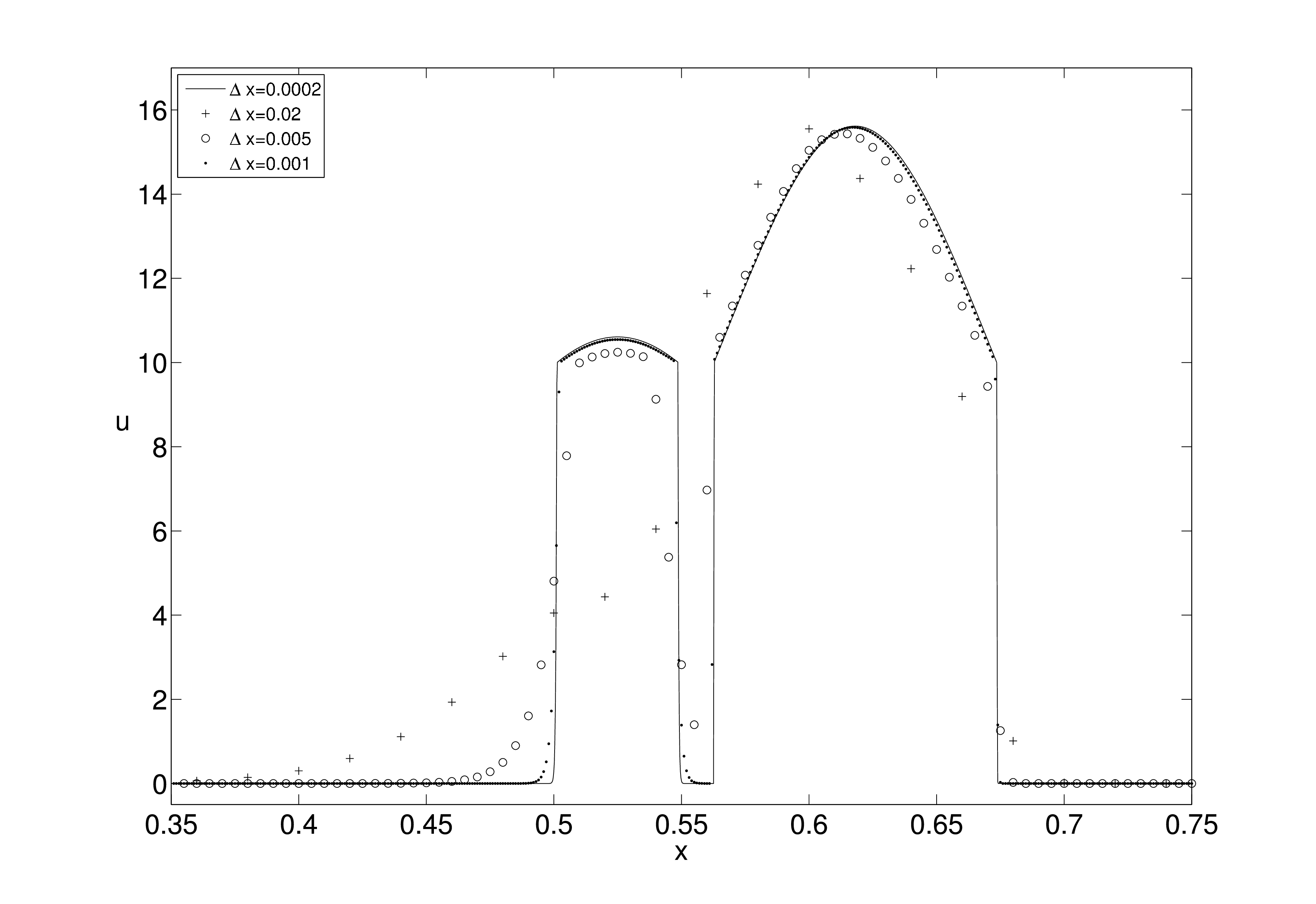}
\end{center} 
\caption{Example~1: Numerical  approximation of~$v$ (top) and~$u$ (bottom) for several mesh sizes at $t=0.25$.\label{comparison_u_05_ex1a}}
\end{figure}

\subsection{Example 1}
In Example~1 we calculate the numerical solution of 
 \eqref{goveq}, \eqref{initcond}  for 
   $ \Phi(v)=-(1-v)^2$ and the degenerating 
  diffusion coefficient 
  \begin{align*}
 A(u)=\begin{cases}
       0 & \text{for $0\leq u\leq 10$,} \\
0.1(u-10) & \text{for $u> 10$.} 
      \end{cases}
\end{align*}
The initial datum is given by
\begin{align*}
 u_0(x)=\begin{cases}
         5 \quad\text{for $0.1\leq x\leq 0.2$,}\quad  & 7\quad\text{for $0.8\leq x\leq 0.9$,} \\
         8 \quad\text{for $0.6\leq x\leq 0.7$,} & 0\quad\text{otherwise.}
        \end{cases}
\end{align*}

Note that $C_0=2$ in Example~1, where $C_0$ is defined in \eqref{Cdef}, 
and $v^*=C_0/2=1$, so that the function~$\Phi$ corresponds to \eqref{phiez}, 
where the constant of integration is $-1$, and that $u_0$~is chosen such that 
 \eqref{initial_data_stability} is satisfied. 
Moreover, in our case $\Phi''(v) = -2 <0$, and $\Phi(0) = \Phi(C_0) = -1$. 
 Nagai and Mimura \cite{nagai3} show that under these conditions, and 
 for the integrated diffusion coefficient given by \eqref{aum}, the 
 solution  converges in time  to a compactly supported, stationary travelling-wave 
 solution,  which represents the aggregated group of individuals and is 
  defined by the time-independent version  of~\eqref{goveq}.  
    
In Figure~\ref{Fig.1} we show the numerical approximations for~$v$ and~$u$ 
    for  $0 \leq t \leq 0.5$ and for $\Delta x=0.001$. As predicted, for each fixed  time 
 the  data $\smash{\{v_j^n\}}$ are monotonically increasing, and   
   the  numerical solution for~$u$ indeed displays the aggregation phenomenon, and  terminates 
    in a stationary profile, even though the assumptions on $A(\cdot)$ 
     stated in \cite{nagai3} are not satisfied here. This supports the  conjecture that 
     a similar travelling wave analysis can also be conducted in the 
     present strongly degenerate case, to which we will come back in a separate paper. 

In Table~\ref{table1} we show the error at $t_1=0.1$ and $t_2=0.25$ in the $L^1$ norm for $u$ (denoted as $\smash{e^{t_{i}}_u}$, $i=1,2$) and in the $L^\infty$ norm for $v$ (denoted as 
 $\smash{e^{t_{i}}_v}$, $i=1,2$), where we take as a reference  the solution calculated with $\Delta x=0.0002$. 
 We find an experimental rate of convergence in both cases greater than one. 
  For small~$\Delta x$  this  behavior is possibly related to  the proximity of the reference solution. 
   One should expect a real order of convergence 
  at most one  since the $v$-scheme is monotone.

In Figure~\ref{comparison_u_05_ex1a}  we compare the 
 numerical approximations for~$v$ and~$a$  
   for different mesh sizes at the simulated time $t=0.25$.

\begin{figure}[t] 
\begin{center} 
\includegraphics[width=0.9\textwidth]{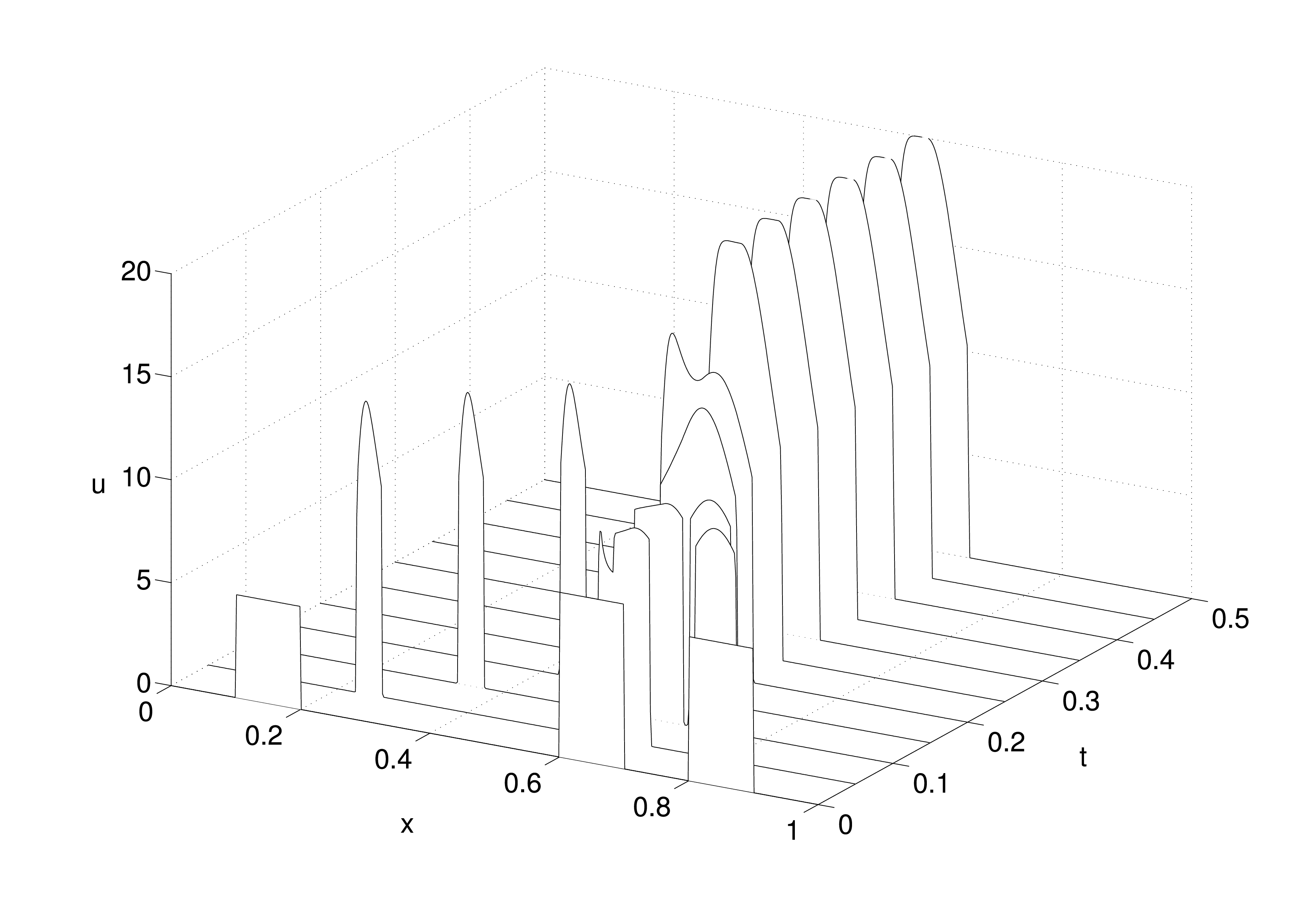}
\end{center} 
\caption{Example~2: Numerical approximation of~$u$, obtained via \eqref{comb_scheme} with $\Delta x=0.001$.\label{Fig.3}}
\end{figure} 

\subsection{Example 2} This example   represents a 
slight modification of Example~1, namely we choose $A(\cdot)$ and $u_0$ as in Example~1 but use 
\begin{align*}
 \Phi(v)=\begin{cases}
-(1-v)^8& \textrm{for $0\leq v\leq 1$,} \\
-(1-v)^2&\textrm{for $v>1$.} 
         \end{cases}
\end{align*}
The function~$\Phi$ 
  has its maximum in $v^*=1$  and satisfies 
   $\Phi(0)= \Phi(C_0)$, as in Example~1, so we expect to see 
    an aggregation phenomenon and the formation of a stationary travelling 
     wave, even though   here  $\Phi$ is not  symmetric with respect to $v^*$.  
   In Figure~\ref{Fig.3} we show the numerical approximation of~$u$  
  for  $0 \leq t \leq 0.5$ and  $\Delta x=0.001$. The solution behavior is 
  similar to that of~Example 1, but the shapes of the ``peaks'' 
   are slightly different, in particular the final ``peak'' is asymmetric.

\subsection{Example 3}
We now choose $\Phi$ and $u_0$ as in Example 1, 
 but define $A(\cdot)$ by  
\begin{align*}
 A(u)=\begin{cases}
       0.05u & \text{for $0\leq u\leq 5$,} \\
0.25 &\text{for $5<u\leq 10$,} \\
0.05u-0.25 &\text{for $u>10$.}  
      \end{cases}
\end{align*}
Figure~\ref{Fig.4} shows the results for $\Delta x=0.001$ and $t\in[0,0.5]$.
 Again, a stationary single-peak solution is forming, including  
   a jump between 
   $u=5$ and $u=10$, in agreement with the flatness of $A(u)$ 
    for $u \in [5,10]$. Table~\ref{table5}  shows  the error of the 
     approximations of~$v$ and~$u$ at $t_1=0.1$ and $t_2=0.25$. The reference  solution has been calculated with $\Delta x=0.0002$. The observed rate  of convergence is again one.

\begin{figure}[t] 
\begin{center} 
\includegraphics[width=0.9\textwidth]{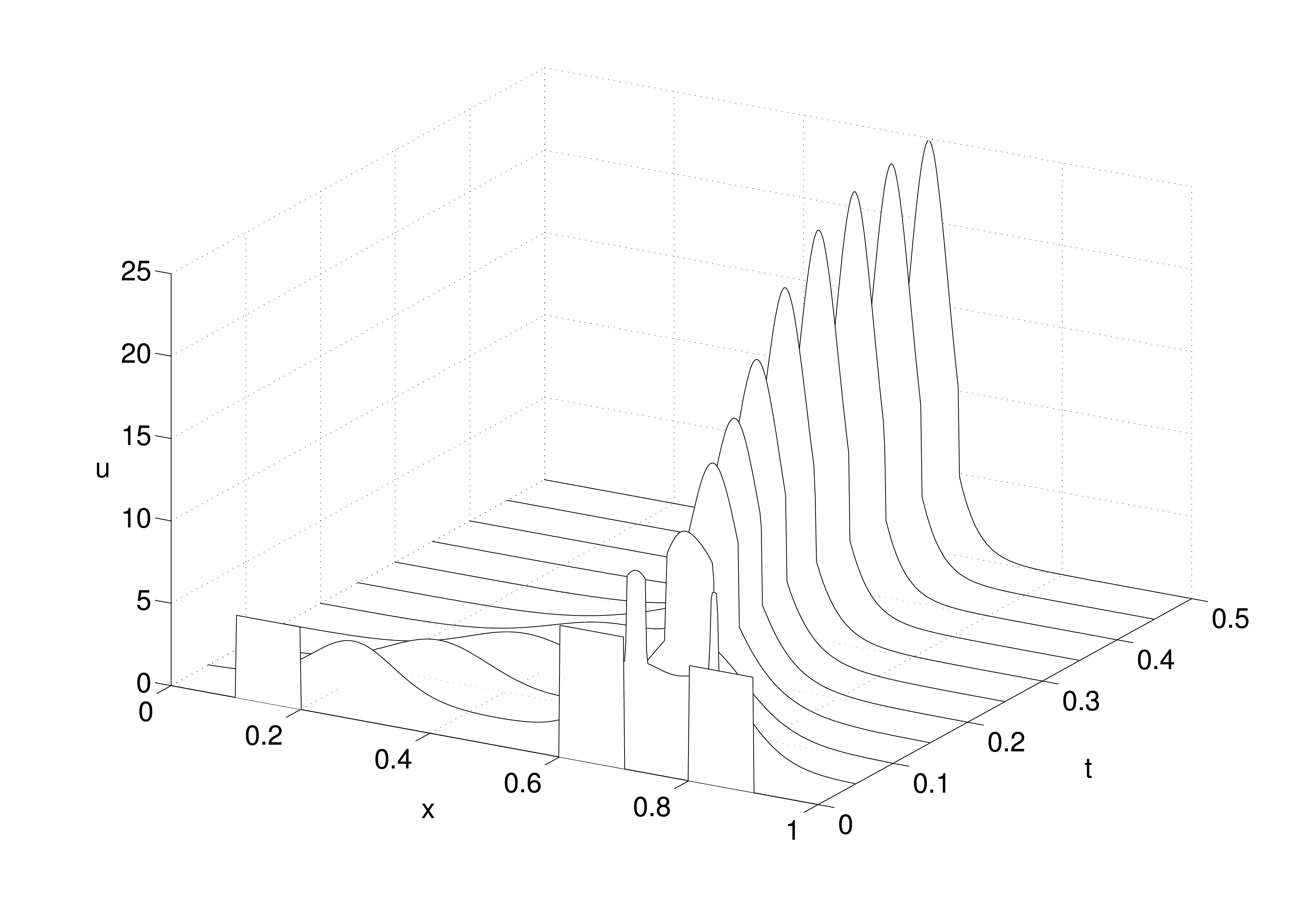}
\end{center} 
\caption{Example~3: Numerical approximation of~$u$, obtained via \eqref{comb_scheme} for $\Delta x=0.001$.\label{Fig.4}}
\end{figure}

\begin{table}[t]
\begin{center}
\begin{tabular}{cccccccccc}
\hline
 $\Delta x$  & $e^{t_{1}}_v$& ${}^{\mathrm{conv.}}_{\mathrm{rate}}$ &  $e^{t_{2}}_v$& ${}^{\mathrm{conv.}}_{\mathrm{rate}}$ & $e^{t_{1}}_u$ & ${}^{\mathrm{conv.}}_{\mathrm{rate}}$ & $e^{t_{2}}_u$ & ${}^{\mathrm{conv.}}_{\mathrm{rate}}$ $\vphantom{\int_{X_x}^X}$\\
\hline
0.020	 & 0.167 &   -   & 0.330 &   -   & 0.329 &   -   & 0.297 &   -\\
0.010    & 0.083 & 1.010 & 0.166 & 0.994 & 0.185 & 0.834 & 0.161 & 0.884\\
0.005    & 0.039 & 1.099 & 0.079 & 1.066 & 0.105 & 0.812 & 0.086 & 0.912\\
0.004    & 0.031 & 0.932 & 0.062 & 1.097 & 0.089 & 0.733 & 0.064 & 1.322\\
0.002    & 0.014 & 1.195 & 0.028 & 1.170 & 0.043 & 1.060 & 0.034 & 0.886\\
0.001	 & 0.006 & 1.165 & 0.010 & 1.414 & 0.021 & 1.056 & 0.016 & 1.115$\vphantom{\int_X}$\\
\hline
\end{tabular}

\vspace*{2mm} 

\end{center}
\caption{\label{table5}Example 3: Numerical error for~$u$ and~$v$ at $t_1=0.1$ and $t_2=0.25$.}
\end{table}

\subsection{Example 4}
In this example 
 we  utilize a flux function with several extrema given by  
 $\Phi(v)=-0.5(\cos(v\pi)+1)$ combined with  
 the integrated diffusion coefficient  
\begin{align*}
 A(u)=\begin{cases}
       0  &  \text{for $0\leq u\leq 10$,} \\
0.1(u-10) & \text{for $u> 10$} 
      \end{cases}
\end{align*}
and  the initial datum   
\begin{align*}
 u_0(x)=\begin{cases}
  10 \quad \text{for $x\in[0.05,0.15]$,}  & 9 \quad \text{for $x\in[0.6,0.7]$,} \\
  14 \quad \text{for  $x\in[0.3,0.5]$,} & 8 \quad \text{for $x\in[0.9,1]$,} \\
   & 0 \quad \text{otherwise.}
 \end{cases}
\end{align*}

 \begin{figure}[t] 
\begin{center} 
\includegraphics[width=0.9\textwidth]{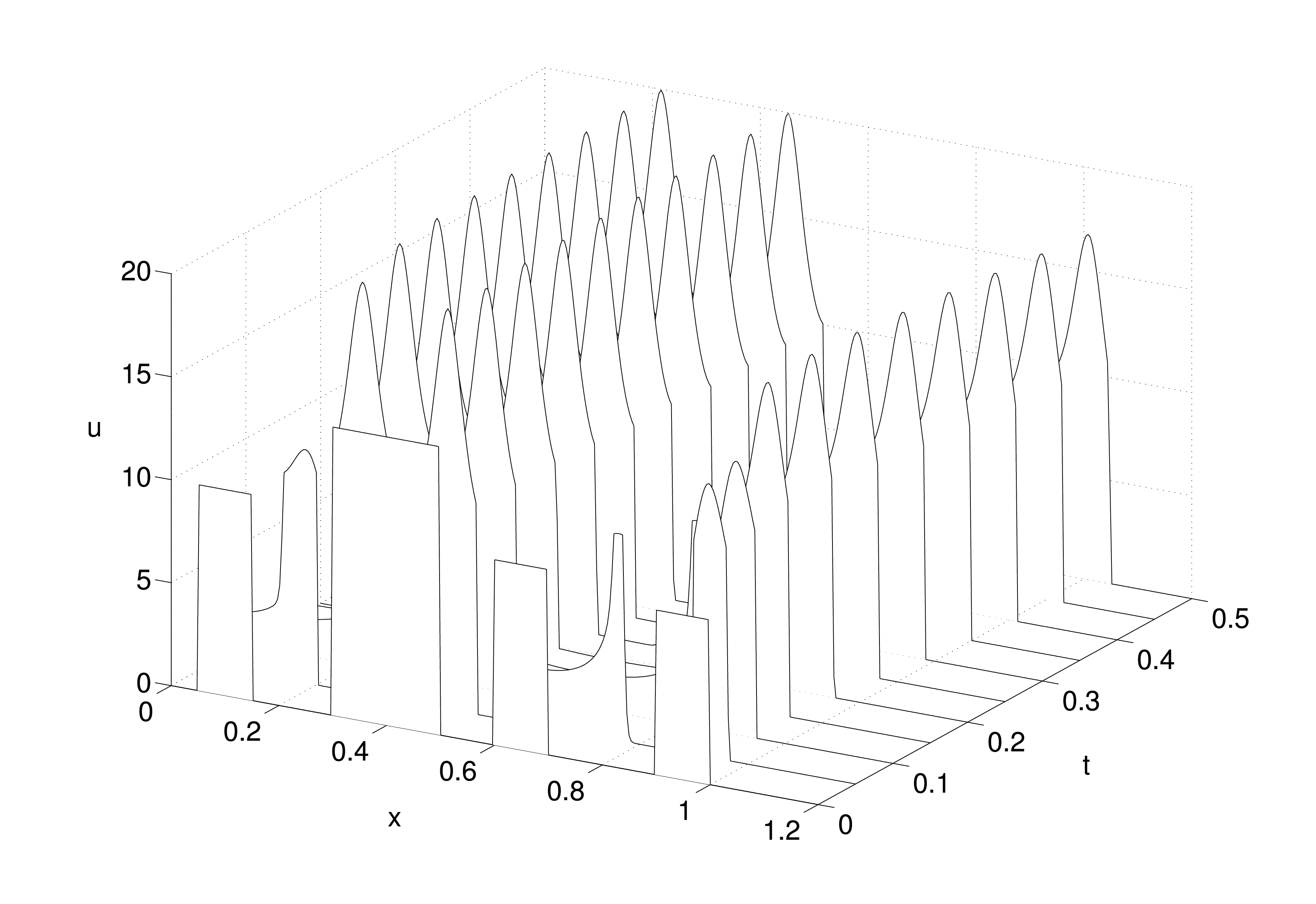}
\end{center} 
\caption{Example 4: Numerical approximation of~$u$, obtained via \eqref{comb_scheme} for $\Delta x=0.001$.\label{Ex3b}}
\end{figure} 

\begin{figure}[t] 
\begin{center} 
\includegraphics[width=0.9\textwidth]{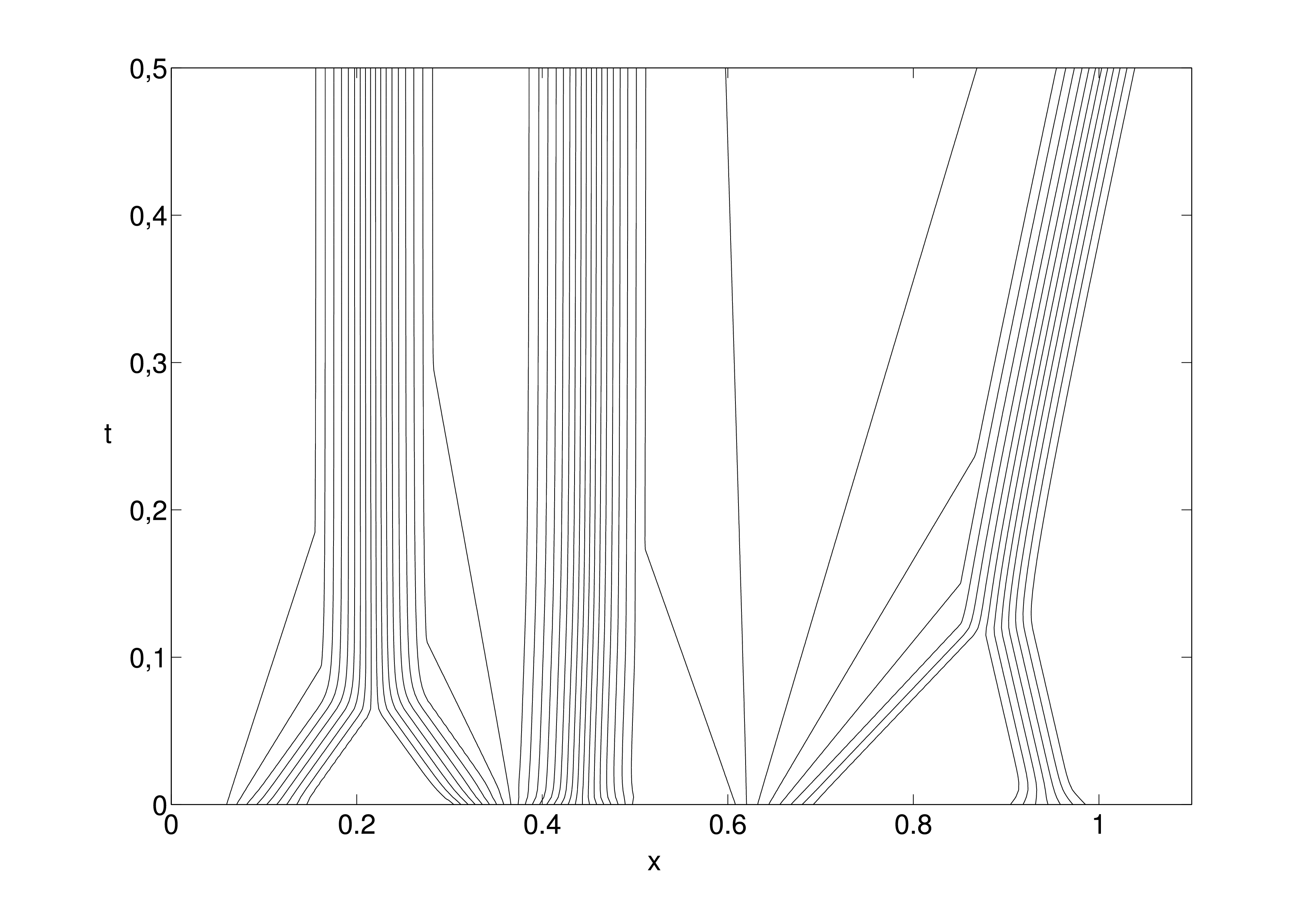}
\end{center} 
\caption{Example~4: Contour lines of the numerical approximation of~$v$ for $\Delta x=0.001$.\label{Ex3a_contour}}
\end{figure}

The result is shown in Figure~\ref{Ex3b} for $\Delta x=0.001$. 
We observe  the formation of three groups, but the  third  moves to the right ``looking for more'' mass since it  is not a full state, in the sense of the Nagai and Mimura \cite{nagai3} condition for the formation of stationary travelling waves. In addition to Figure~\ref{Ex3b} we show 
 in Figure~\ref{Ex3a_contour} a contour plot of the numerical approximation of~$v$ 
 for this example. The contour lines of~$v$ correspond to trajectories of ``individuals''. 
 Table~\ref{table3} shows the error for~$v$ and~$u$ taking as a reference the solution calculated with $\Delta x=0.0002$. We again find the order of convergence predicted for monotone schemes. 

\begin{table}[t]
\begin{center}
\begin{tabular}{cccccccccc}
\hline
 $\Delta x$  & $e^{t_{1}}_v$& ${}^{\mathrm{conv.}}_{\mathrm{rate}}$ &  $e^{t_{2}}_v$& ${}^{\mathrm{conv.}}_{\mathrm{rate}}$ & $e^{t_{1}}_u$ & ${}^{\mathrm{conv.}}_{\mathrm{rate}}$ & $e^{t_{2}}_u$ & ${}^{\mathrm{conv.}}_{\mathrm{rate}}$
$\vphantom{\int_{X_x}^X}$ \\ \hline
0.020	 & 0.361 &   -   & 0.381 &   -   & 1.420 &   -   & 1.244 &   -\\
0.010    & 0.189 & 0.933 & 0.201 & 0.923 & 0.892 & 0.671 & 0.709 & 0.811\\
0.005    & 0.095 & 0.992 & 0.101 & 0.994 & 0.509 & 0.809 & 0.356 & 0.993\\
0.004    & 0.080 & 0.771 & 0.080 & 1.048 & 0.398 & 1.101 & 0.262 & 1.374\\
0.002    & 0.042 & 0.939 & 0.040 & 0.981 & 0.216 & 0.883 & 0.145 & 0.857\\
0.001	 & 0.019 & 1.130 & 0.020 & 1.000 & 0.104 & 1.047 & 0.072 & 1.000 $\vphantom{\int_X}$\\
\hline
\end{tabular}

\vspace*{2mm}

\end{center}
\caption{\label{table3}Example 4: Numerical error for~$u$ and~$v$ at $t_1=0.1$ and $t_2=0.25$.}
\end{table}

\subsection{Example 5}
Here we calculate the numerical approximation of~$u$ for $A(\cdot)$ as in Example 4, but with~$\Phi$ 
 and~$u_0$ given by
 the respective equations 
 \begin{gather*}
 \Phi(v)=\begin{cases}
-0.5(\cos(v\pi)+1)&\text{for $0\leq v\leq 2$,}\\
(v-2)^2-1&\text{for $v> 2$,} 
         \end{cases} \\
 u_0(x)=\begin{cases}
  14 \quad \text{for $x\in[0.15,0.3]$,}  & 18 \quad \text{for $x\in[0.8,0.95]$,} \\
  17 \quad \text{for  $x\in[0.6,0.7]$,} & 
    0 \; \, \quad \text{otherwise.}
 \end{cases}
\end{gather*}

\begin{figure}[t] 
\begin{center} 
\includegraphics[width=0.9\textwidth]{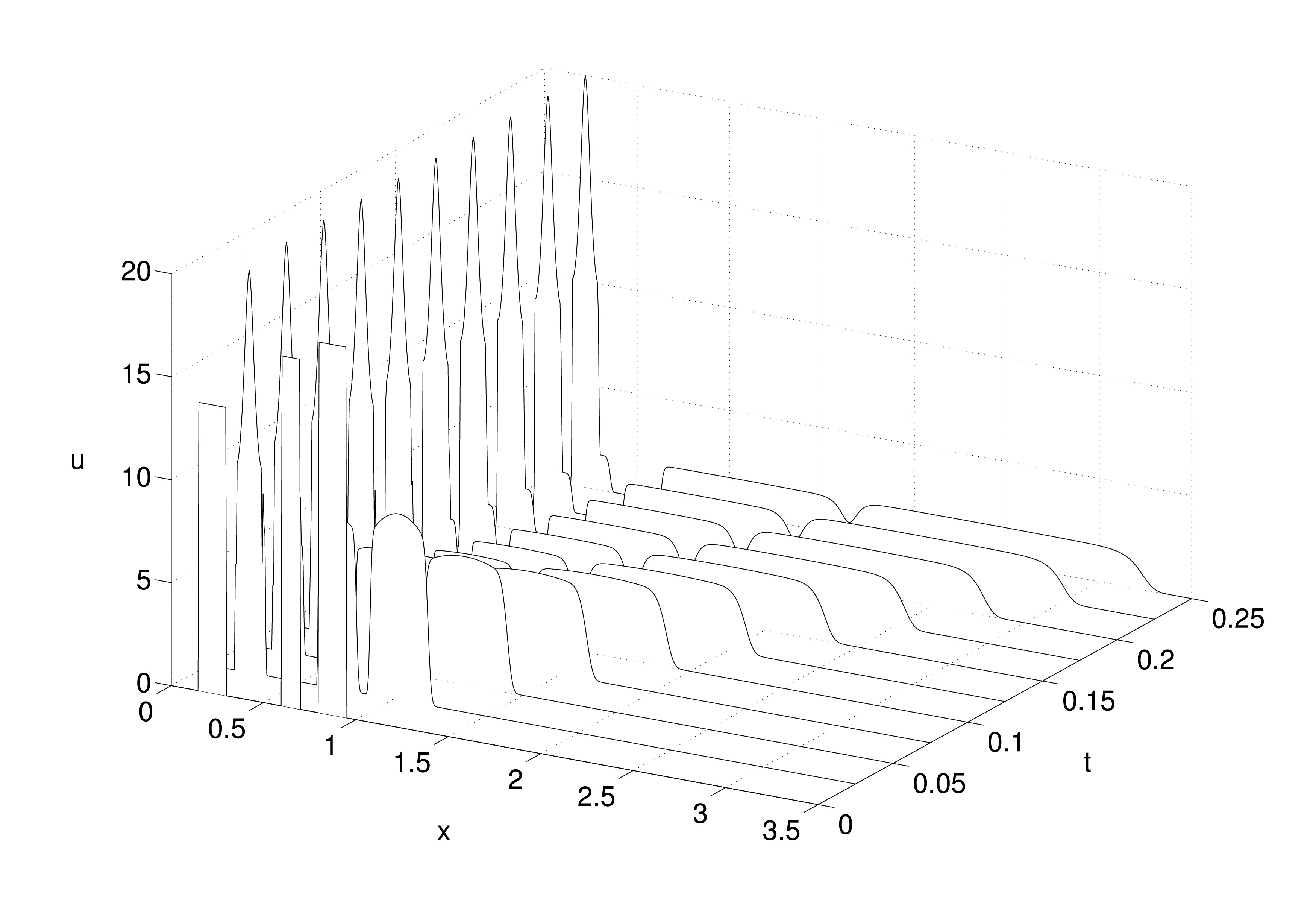}
\end{center} 
\caption{Example~5: Numerical approximation of~$u$, obtained via \eqref{comb_scheme} for $\Delta x=0.001$.\label{Ex4}}
\end{figure} 

In Figure \ref{Ex4} we show the result for $\Delta x=0.001$. We see that the spare mass (i.e. the mass that can not get in the first group) ``dilutes'' to the right. 

\subsection{Example 6}
We consider now the same initial data and parabolic term $A(\cdot)$ as in 
Example~1, but employ  a function~$\Phi$  with several extrema given by $\Phi(v)=-0.5\left(\cos(2\pi v)+1\right)$. Accordingly with the results of Example 1 we expect a steady state consisting of two traveling waves since $\Phi(0)=\Phi(C_0)$. Figure \ref{Ex6} shows the numerical result for~$u$ for $\Delta x=0.001$, which confirm our claim. 

\begin{figure}[t] 
\begin{center} 
\includegraphics[width=0.9\textwidth]{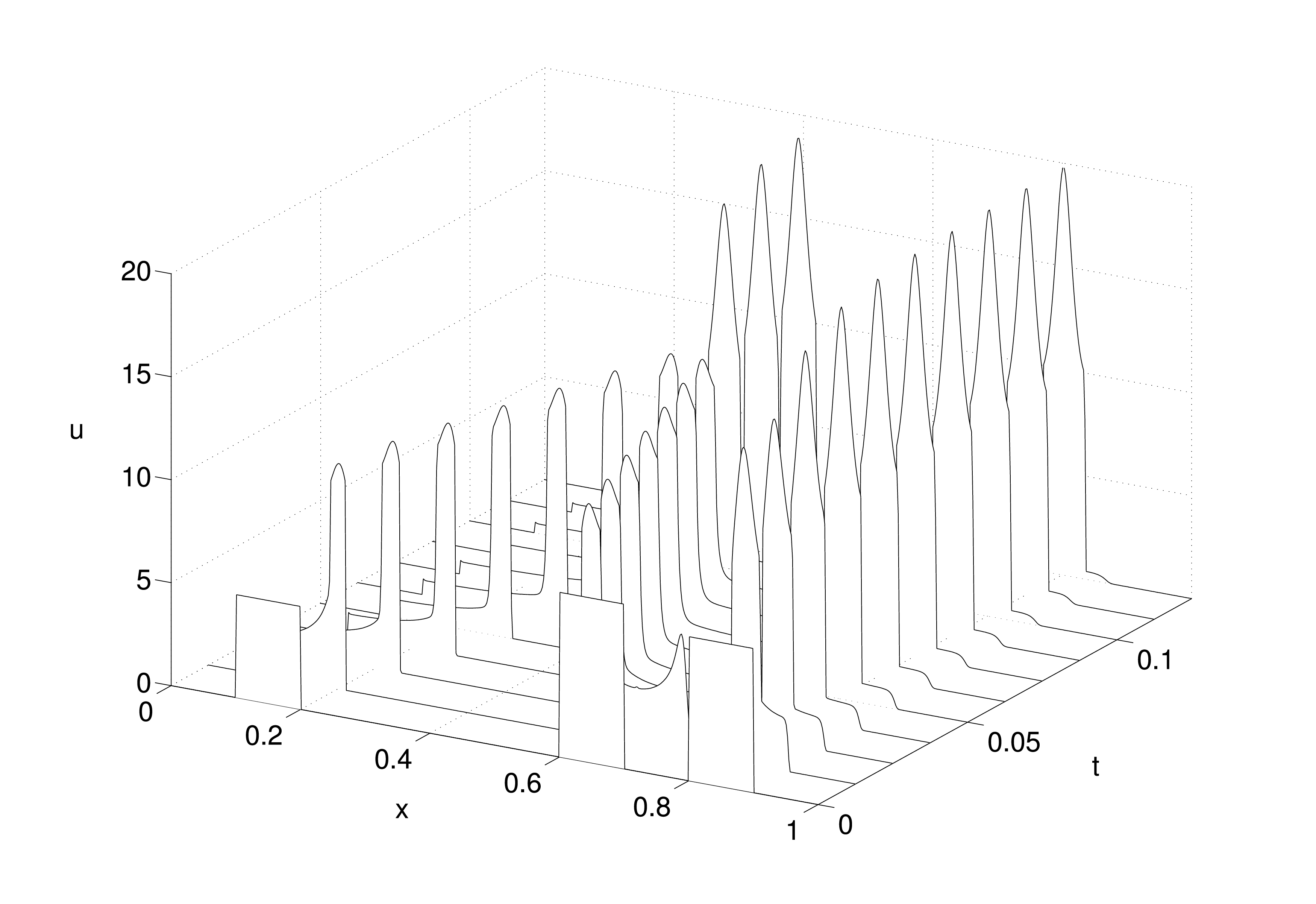}
\end{center} 
\caption{Example~6: Numerical approximation of~$u$, obtained via \eqref{comb_scheme} for $\Delta x=0.001$.\label{Ex6}}
\end{figure} 

\section*{Acknowledgements} FB acknowledges support by CONICYT fellowship.  
RB acknowledges  support by Fondecyt project 1090456, Fondap in
Applied Mathematics, project 15000001, and BASAL project CMM,
Universidad de Chile and Centro de Investigaci\'{o}n en
Ingenier\'{\i}a Matem\'{a}tica (CI$^{\mathrm{2}}$MA), Universidad de
Concepci\'{o}n.

\end{document}